\documentclass[a4paper,reqno,11pt]{amsart}

\usepackage{fullpage}


\usepackage{amssymb}
\usepackage{dsfont}

\usepackage[utf8]{inputenc} 
\usepackage[T1]{fontenc} 

\usepackage{hyperref}

\usepackage[nobysame,alphabetic,initials,msc-links]{amsrefs}

\DefineSimpleKey{bib}{how}

\renewcommand{\eprint}[1]{#1}
\BibSpec{misc}{%
  +{}{\PrintAuthors}  {author}
  +{,}{ \textit}      {title}
  +{,}{ }             {how}
  +{}{ \parenthesize} {date}
  +{,} { available at \eprint}        {eprint}
  +{,}{ available at \url}{url}
  +{,}{ }             {note}
  +{.}{}              {transition}
}

\numberwithin{equation}{section}

\theoremstyle{plain}
\newtheorem{thm}{Theorem}[section]
\newtheorem{prop}[thm]{Proposition}
\newtheorem{lemma}[thm]{Lemma}
\newtheorem{cor}[thm]{Corollary}
\theoremstyle{definition}

\newtheorem{defn}[thm]{Definition}
\theoremstyle{remark}

\newtheorem{question}[thm]{Question}
\newtheorem{remark}[thm]{Remark}
\newtheorem{example}[thm]{Example}

\theoremstyle{plain}

\newenvironment{customthm}[1]
  {\innercustomthm}
  {\endinnercustomthm}

\newcommand\bp{\begin{proof}}
\newcommand\ep{\end{proof}}

\newcommand{\un}{\mathds{1}}

\newcommand\C{\mathbb{C}}

\newcommand{\G}{{\mathcal{G}}}
\newcommand{\HH}{{\mathcal{H}}}

\newcommand{\V}{{\mathcal{V}}}

\newcommand\Ad{\operatorname{Ad}}
\newcommand{\CcG}{{C_{c}(\mathcal{G})}}

\newcommand{\Gu}{{\mathcal{G}^{(0)}}}
\newcommand{\Gxx}{\mathcal{G}^x_x}

\newcommand\ev{\mathrm{ev}}
\newcommand\Ind{\operatorname{Ind}}

\newcommand\supp{\operatorname{supp}}
\newcommand\Prim{\operatorname{Prim}}
\newcommand\ess{\mathrm{ess}}
\newcommand\sing{\mathrm{sing}}
\newcommand\eps{\varepsilon}
\newcommand\Iso{\operatorname{Iso}(\mathcal G)^{\circ}}
\newcommand\IsoG{\operatorname{Iso}(\mathcal G)}
\newcommand\IsoGx[1]{\operatorname{Iso}(\mathcal G_{\overline{[#1]}})}

\newcommand\ee{\nopagebreak\mbox{\ }\hfill$\diamond$}

\begin{document}

\title{Isotropy fibers of ideals in groupoid C$^{*}$-algebras}

\date{August 21, 2023; revised March 22, 2024}

\author{Johannes Christensen}
\address{Department of Mathematics, KU Leuven, Belgium}
\email{johannes.christensen@kuleuven.be}

\author{Sergey Neshveyev}
\address{Department of Mathematics, University of Oslo, Norway}
\email{sergeyn@math.uio.no}

\thanks{J.C. is supported by the postdoctoral fellowship 1291823N of the Research Foundation Flanders. S.N. is partially supported by the NFR funded project 300837 ``Quantum Symmetry''.}

\begin{abstract}
Given a locally compact \'etale groupoid and an ideal $I$ in its groupoid C$^*$-algebra, we show that $I$ defines a family of ideals in group C$^*$-algebras of the isotropy groups and then study to which extent $I$ is determined by this family. As an application we obtain the following results: (a) prove that every proper ideal is contained in an induced primitive ideal; (b) describe the maximal ideals; (c) classify the primitive ideals for a class of graded groupoids with essentially central isotropy.
\end{abstract}

\maketitle

\section*{Introduction}

In recent work we associated possibly exotic group C$^{*}$-algebras $C_{e}^{*}(\Gxx)$, $x\in \Gu$, to the isotropy groups $\Gxx$ of a not necessarily Hausdorff locally compact \'etale groupoid $\mathcal{G}$~\cite{CN}. Our motivation was analysis of KMS states and weights on groupoid C$^*$-algebras~\citelist{\cite{N},\cite{MR4592883}}. In this paper we use this construction to study the ideal structure of such algebras.

Our starting point is the observation that the image $I_x$ of any ideal $I\subset C^*_r(\G)$ under the canonical contraction $C^*_r(\G)\to C^*_e(\Gxx)$ is an ideal, which we call an isotropy fiber of~$I$. The observation is not entirely new: for transformation groupoids  $\G=X\rtimes\Gamma$ (when $C_{e}^{*}(\Gxx)=C_{r}^{*}(\Gamma_x)$ by~\cite{CN}), the fact that $I_x$ is at least an algebraic ideal in~$C^*_r(\Gamma_x)$ has been used in~\citelist{\cite{O},\cite{Ka}}. A natural question then is whether the ideals~$I_x$ contain complete information about $I$. A similar question can be asked for any groupoid C$^*$-algebra $C^*_\nu(\G)$ defined by a C$^*$-norm $\|\cdot\|_\nu$ dominating the reduced norm, once we define appropriate group C$^*$-algebras $C^*_\nu(\Gxx)$, see Section~\ref{ssec:completions} for the precise definition of $C^*_\nu(\Gxx)$.

This turns out to be related to the old question whether every primitive ideal is induced from an isotropy group. It was conjectured by Effros and Hahn~\cite{EH} that this is the case for the second countable transformation groupoids $X\rtimes\Gamma$ with $\Gamma$ amenable. The conjecture was verified, in a generalized form, by Sauvageot~\cite{Sau} and Gootman--Rosenberg~\cite{GR}. Extensions of their results to more general groupoids were then given by Renault~\cite{R} and Ionescu--Williams~\cite{IW}. As a consequence our question has a positive answer for all amenable second countable Hausdorff \'etale groupoids.

In the nonamenable case the question has in general a negative answer, see Examples~\ref{ex:AFS0}, \ref{ex:AFS} below. Nevertheless by looking at the family $(I_x)_x$ we can obtain the following result.

\begin{customthm}{A}\label{thm:A}
Assume $\G$ is a locally compact \'{e}tale groupoid, $C^*_\nu(\G)$ is a groupoid C$^*$-algebra and $I\subset C^*_\nu(\G)$ is a proper ideal. Let $U\subset\Gu$ be the open set such that $C_0(\Gu)\cap I=C_0(U)$. Then, for every point $x\in U^c$, there is a primitive ideal $J\subset C^*_\nu(\Gxx)$ such that $I\subset\Ind J$.
\end{customthm}

Even for amenable groupoids Theorem~\ref{thm:A} provides information about the ideal structure that is not immediately available from the Effros--Hahn conjecture. Our two main applications are as follows.

First we show that it is possible, at least in principle, to classify the maximal ideals in terms of the dynamics on the unit space and the isotropy groups.
In other words, we can describe the closed points of the primitive spectrum.

\begin{customthm}{B}\label{thm:B}
The maximal ideals of $C^*_\nu(\G)$ are parameterized by the pairs $(F,J)$, where $F\subset \Gu$ is a minimal closed invariant subset and $J\subset C^*_\nu(\G^{x_F}_{x_F})$ is a maximal $\G$-invariant ideal for a fixed point $x_F\in F$.
\end{customthm}

We refer the reader to Section~\ref{sectionMaximal} for further discussion and an explanation of the terminology used here.

Our second application is a classification of all primitive ideals for a class of groupoids.

\begin{customthm}{C}\label{thm:C}
Assume $\G$ is an amenable second countable Hausdorff locally compact \'etale groupoid graded by a discrete abelian group $\Gamma$, with grading $\Phi\colon\G\to \Gamma$ such that $\Phi\colon\Gxx\to \Gamma$ is injective for all~$x$. Then the primitive ideals of $C^*(\G)$ are parameterized by $(\Gu\times\hat \Gamma)/_\sim$, where $(x_1,\chi_1)\sim (x_2,\chi_2)$ if and only if $\overline{[x_1]}=\overline{[x_2]}$ and $\chi_1=\chi_2$ on $\Phi(\IsoGx{x_1}^\circ_{x_1})=\Phi(\IsoGx{x_2}^\circ_{x_2})$.
\end{customthm}

This generalizes the description of $\Prim (C_0(X)\rtimes \Gamma)$ due to Williams~\cite{MR0617538} (but without describing the topology) and the result of Sims and Williams~\cite{SW} on the Deaconu--Renault groupoids $\G_T$ defined by tuples $T=(T_1,\dots,T_k)$ of commuting local homeomorphisms, which in turn subsumes earlier results on $k$-graph C$^*$-algebras.
In fact, we prove a more general result than Theorem~\ref{thm:C} that allows for a small degree of noncommutativity in the isotropy groups, see Section~\ref{sec:graded} for the precise statement.

In addition to our two main applications, we have included two sections where we further investigate some consequences of our results. In Section~\ref{sec:simplicity} we analyze what can be said about an ideal $I\subset C^*_r(\G)$ if $C^*_r(\HH)\cap I=0$ for an open subgroupoid $\HH\subset\G$. Our results here complement and generalize a number of results in the literature. In Section~\ref{sec:exoticfiber} we show that under a mild countability assumption the kernel of the canonical homomorphism $C_{e}(\G_{x}^{x})\to C_{r}^{*}(\G_{x}^{x})$ coincides with the isotropy fiber of the singular ideal in $C^*_r(\G)$ for a residual set of $x\in \Gu$.

\bigskip

\section{Preliminaries} \label{sec:prelim}

\subsection{\'Etale groupoids and their \texorpdfstring{C$^*$}{C*}-algebras}

Let $\G$ be a groupoid with unit space $\Gu$, the range map $r\colon g\mapsto gg^{-1}$ and the source map $s\colon g\mapsto  g^{-1}g$. For sets $D,F \subset \Gu$ we write $\G_{D}^{F}:=r^{-1}(F)\cap s^{-1}(D)$, and in particular we write $\G_{x}^{x}:=\G_{\{x\}}^{\{x\}}$ for the isotropy group at $x\in \Gu$. A subset $U\subset \Gu$ is called invariant if $U=s(r^{-1}(U))$. 

\smallskip

As in \cite{CN}, we will work with possibly non-Hausdorff locally compact \'{e}tale groupoids, i.e., we assume that the topology on $\G$ satisfies the following properties:
\begin{enumerate}
\item[-] the groupoid operations are continuous;
\item[-] the unit space $\Gu $ is a locally compact Hausdorff space in the relative topology;
\item[-] the map $r$ is a local homeomorphism.
\end{enumerate}
For several results we will need some additional assumptions on the topology on $\G$, such as second countability, but we will state them explicitly every time they are needed.

We denote by $C_c(\G)$ the space spanned by functions $f$ on $\G$ such that~$f$ is zero outside an open Hausdorff subset $W\subset \G$ and the restriction of $f$ to $W$ is continuous and compactly supported. When $\G$ is non-Hausdorff, this space contains functions that are not continuous, so to denote it by $C_c(\G)$ is a slight abuse of notation. One can find other notations for this function space in the literature, like $C_c(\G)_{0}$ or $\mathcal{C}(\G)$, but we will denote it by $C_c(\G)$ to keep our notation in line with our previous paper \cite{CN} and make the paper as accessible as possible to the readers only interested in Hausdorff groupoids.

The space $C_c(\G)$ is a $*$-algebra with convolution product
\begin{equation*} \label{eprod}
(f_{1}*f_{2})(g) := \sum_{h \in \G^{r(g)}} f_{1}(h) f_{2}(h^{-1}g)
\end{equation*}
and involution by $f^{*}(g):=\overline{f(g^{-1})}$.

For every point $x\in \Gu $ we have a representation $\rho_{x}\colon C_{c}(\G) \to B(\ell^{2}(\G_{x}))$ defined by
\begin{equation}\label{eq:rhox}
\rho_{x}(f) \delta_{g} :=\sum_{h \in \G_{r(g)}} f(h) \delta_{hg},
\end{equation}
where $\delta_g$ is the Dirac delta-function. Equivalently,
\begin{equation}\label{eq:rhox2}
(\rho_x(f)\xi)(g) =
\sum_{h\in \G^{r(g)}}f(h) \xi(h^{-1}g),\qquad \xi\in\ell^2(\G_x).
\end{equation}
The reduced norm $\lVert \cdot \rVert_{r}$ on $C_{c}(\G)$ is defined by
\begin{equation*}
\lVert f \rVert_{r}  := \sup_{x\in \Gu } \lVert \rho_{x}(f) \rVert,
\end{equation*}
and the reduced groupoid $C^{*}$-algebra $C^{*}_{r}(\G)$ is then defined as the completion of $C_{c}(\G)$ with respect to this norm.

\begin{defn}\label{def:groupoid-algebra}
By a \emph{groupoid C$^*$-algebra} we mean the completion $C^*_\nu(\G)$ of $C_c(\G)$ with respect to some C$^*$-norm $\|\cdot\|_\nu$ dominating the reduced norm.
\end{defn}

Recall that a bisection $W$ is a subset of $\G$ such that $r|_{W}\colon W\to r(W)$ and $s|_{W}\colon W\to s(W)$ are bijections. It is well-known that if $f\in C_c(W)$ for an open bisection $W\subset\G$, then
\begin{equation} \label{eq:norm-bisection}
\|f\|_\nu=\lVert f\rVert_{\infty}
\end{equation}
for any C$^*$-norm $\|\cdot\|_\nu$ as above, see, e.g., \cite{MR2419901}*{Section~3}. Here we of course view $f$ as a function on $\G$ by extending it by $0$ outside $W$. This identity implies in particular that there is a maximal C$^*$-norm $\|\cdot\|$, and we denote by $C^*(\G)$ the completion of~$C_c(\G)$ with respect to this norm.

\smallskip

Given a groupoid C$^*$-algebra $C^*_\nu(\G)$ we get a groupoid C$^*$-algebra $C^*_\nu(\G_X)$ for every invariant subset $X\subset\Gu$ that is either open or closed. Namely, assume first that $X$ is open and invariant. We can view~$C_c(\G_X)$ as a subspace of $C_c(\G)$ by extending functions by zero. Then $C_c(\G_X)$ becomes a $*$-ideal in $C_c(\G)$ and we get by restriction a C$^*$-norm on $C_c(\G_X)$, which we continue to denote by~$\|\cdot\|_\nu$. For every $x\in X$, the restriction of the regular representation $\rho_x\colon C_c(\G)\to B(\ell^2(\G_x))$ to $C_c(\G_X)$ coincides with the regular representation of $C_c(\G_X)$ associated with $x$. Hence the norm $\|\cdot\|_\nu$ on~$C_c(\G_X)$ dominates the reduced norm.

Assume now that $X$ is closed and invariant. Define a C$^*$-seminorm on $C_c(\G_X)$ by
$$
\|f\|_\nu:=\sup_\pi\|\pi(f)\|,\quad f\in C_c(\G_X),
$$
where the supremum is taken over all representations $\pi$ of $C_c(\G_X)$ such that the representation $f\mapsto \pi(f|_{\G_X})$ of $C_c(\G)$ extends to $C^*_\nu(\G)$. As for every $x\in X$ we have $(\G_X)_x=\G_x$, the representation~$\rho_x$ of~$C_c(\G)$ factors through $C_c(\G_X)$ and defines the regular representation of~$C_c(\G_X)$ associated with $x$. Hence the seminorm $\|\cdot\|_\nu$ on $C_c(\G_X)$ is actually a norm dominating the reduced norm.

It is not difficult to see that the two constructions agree when $X$ is both open and closed, as then $C_c(\G)=C_c(\G_X)\bigoplus C_c(\G_{X^c})$.

\begin{prop}\label{prop:exact}
Assume $\G$ is a locally compact \'{e}tale groupoid, $C^*_\nu(\G)$ is a groupoid C$^*$-algebra and $X\subset\Gu$ is a closed invariant set. Then we have a short exact sequence
$$
0\to C^*_\nu(\G_{X^c})\to C^*_\nu(\G)\to C^*_\nu(\G_X)\to0
$$
of groupoid C$^*$-algebras.
\end{prop}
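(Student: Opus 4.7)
The plan is to extend the algebraic restriction $\iota\colon C_c(\G)\to C_c(\G_X)$, $f\mapsto f|_{\G_X}$, to a surjective $*$-homomorphism $q\colon C^*_\nu(\G)\to C^*_\nu(\G_X)$ and then identify $\ker q$ with $C^*_\nu(\G_U)$, where $U=X^c$. The map $\iota$ is a $*$-algebra homomorphism: compatibility with the product uses invariance of $X$, since for $g\in\G_X$ the convolution sum $\sum_{h\in\G^{r(g)}}f_1(h)f_2(h^{-1}g)$ only involves $h$ with $r(h)=r(g)\in X$, so by invariance also $s(h)\in X$, hence $h\in\G_X$. Surjectivity of $\iota$ reduces via partition of unity to a Tietze extension on open Hausdorff patches. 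The extension of $\iota$ to $C^*_\nu(\G)$ is automatic from the definition of $\|\cdot\|_\nu$ on $C_c(\G_X)$: for any admissible representation $\pi$ we have $\|\pi(\iota(f))\|\le\|f\|_\nu$, so taking the supremum gives $\|\iota(f)\|_\nu\le\|f\|_\nu$. The extension $q$ is then surjective because its image is closed and contains the dense subalgebra $C_c(\G_X)$.

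For the kernel, the inclusion $C^*_\nu(\G_U)\subset\ker q$ is immediate. For the converse I would show that the induced map $\bar q\colon C^*_\nu(\G)/C^*_\nu(\G_U)\to C^*_\nu(\G_X)$ is isometric, hence injective. Given any representation $\sigma$ of the quotient, its pullback $\tilde\sigma$ to $C^*_\nu(\G)$ annihilates $C^*_\nu(\G_U)$. Provided $\ker\iota\subset\overline{C_c(\G_U)}^{\|\cdot\|_\nu}$, the restriction $\tilde\sigma|_{C_c(\G)}$ factors through $\iota$ as a well-defined representation $\rho$ of $C_c(\G_X)$; since $\rho\circ\iota$ extends to $C^*_\nu(\G)$ by construction, $\rho$ is admissible in the supremum defining $\|\cdot\|_\nu$ on $C_c(\G_X)$, giving $\|\sigma([f])\|=\|\rho(\iota(f))\|\le\|\iota(f)\|_\nu=\|\bar q([f])\|$ for all $f\in C_c(\G)$. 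Taking the supremum over $\sigma$ and combining with the trivial reverse inequality yields isometry of $\bar q$, so $\ker q=C^*_\nu(\G_U)$.

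The main obstacle is the inclusion $\ker\iota\subset\overline{C_c(\G_U)}^{\|\cdot\|_\nu}$. Even in the Hausdorff setting a function $f\in C_c(\G)$ vanishing on $\G_X$ need not have compactly supported restriction to $\G_U$ (its support in an open Hausdorff patch can meet $\G_X$ along the zero set of $f$), so one cannot simply view $f$ as an element of $C_c(\G_U)$. Instead, by partition of unity I would reduce to $f\in C_c(B)$ for an open bisection $B$ with $f|_{B\cap\G_X}=0$, and approximate $f$ by $f\phi_\eps$ where $\phi_\eps$ is a Urysohn cutoff compactly supported in the open set $B\cap\G_U$ and equal to $1$ on the compact set $\{|f|\ge\eps\}$. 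The bisection norm identity~\eqref{eq:norm-bisection} then yields $\|f-f\phi_\eps\|_\nu=\|f-f\phi_\eps\|_\infty\le\eps$, providing the desired approximation and completing the proof.
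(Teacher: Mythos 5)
Your overall strategy coincides with the paper's: both arguments extend the restriction map contractively straight from the definition of $\|\cdot\|_\nu$ on $C_c(\G_X)$, and both reduce the identification of the kernel to the claim that $C_c(\G_{X^c})$ is dense, with respect to $\|\cdot\|_\nu$, in the kernel of the restriction map $C_c(\G)\to C_c(\G_X)$ (your isometry argument for $\bar q$ is a spelled-out version of the paper's remark that it suffices for $C_c(\G)\to C^*_\nu(\G)/C^*_\nu(\G_{X^c})$ to factor through $C_c(\G_X)$). The genuine gap is in your proof of that density claim. The proposition is stated for possibly non-Hausdorff \'etale groupoids, where $C_c(\G)$ is only the linear span of functions compactly supported in open Hausdorff subsets, and its elements need not be continuous. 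If $f=\sum_i f_i$ with $f_i\in C_c(W_i)$ vanishes on $\G_X$, the individual summands need not vanish on $\G_X$ (there can be cancellation), and your ``reduction by partition of unity'' is not available: pointwise multiplication of $f$ by a continuous cutoff supported in one patch does not stay in $C_c(\G)$, since products of functions living on different patches lose continuity and compact support on intersections, which need not be relatively compact in a non-Hausdorff space. So the reduction to $f\in C_c(B)$ with $f|_{B\cap\G_X}=0$, which is where all of your work happens, fails in general; this cancellation phenomenon is precisely the nontrivial point. The paper handles it by citing \cite{NS}*{Proposition~1.1}, which establishes the density for the maximal C$^*$-norm by a different argument, and then observes that density for the maximal norm implies density for any dominated norm $\|\cdot\|_\nu$.

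In the Hausdorff case your argument is complete and correct: every $f\in\ker\iota$ is then continuous, multiplying by a partition of unity subordinate to a cover of $\supp f$ by open bisections keeps each piece in $\ker\iota$, and your Urysohn cutoff combined with \eqref{eq:norm-bisection} gives the approximation. Thus the proposal yields a self-contained proof only for Hausdorff $\G$; to obtain the stated generality you must either invoke \cite{NS}*{Proposition~1.1} as the paper does, or supply a separate argument dealing with the non-Hausdorff cancellation in $\ker\iota$.
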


\bp
By construction $C^*_\nu(\G_{X^c})$ can be viewed as an ideal in $C^*_\nu(\G)$. For $f\in \CcG$, we have
$$
\lVert f|_{\G_{X}} \rVert_{\nu} =\sup_{\pi} \lVert \pi(f|_{\G_{X}})\rVert \leq \lVert f \rVert_{\nu},
$$
where $\pi$ runs through the representations described above, so the map~$f\mapsto f|_{\G_X}$ extends to a surjective homomorphism $C^*_\nu(\G)\to C^*_\nu(\G_X)$. Clearly~$C^*_\nu(\G_{X^c})$ is contained in the kernel of this homomorphism. In order to show that it coincides with this kernel it suffices to check that the map $C_c(\G)\to C^*_\nu(\G)/C^*_\nu(\G_{X^c})$ factors through $C_c(\G_X)$. For this, in turn, it is enough to show that $C_c(\G_{X^c})$ is dense in the kernel of the map $C_c(\G)\to C_c(\G_X)$, $f\mapsto f|_{\G_X}$, with respect to the norm $\|\cdot\|_\nu$ on~$C_c(\G)$. By \cite{NS}*{Proposition~1.1} this is true for the maximal C$^*$-norm, hence for~$\|\cdot\|_\nu$.
\ep

It is well-known that if we start with the maximal C$^*$-norm on $C_c(\G)$, then the norms that we get on $C_c(\G_X)$ are again maximal, see, e.g., \cite{NS}*{Corollary~1.2}. On the other hand, if we take the reduced norm, then for open $X$ we obtain the reduced norm, but for closed $X$ we get in general a different norm on $C_c(\G_{X})$, even when $X$ is a single point~\citelist{\cite{HLS},\cite{W}} or $\G$ is principal~\cite{AFS} (see also Example~\ref{ex:AFS} below). One therefore has to exercise some caution with our notation: if one chooses $\lVert \cdot \rVert_{\nu}=\lVert \cdot \rVert_{r}$ on $\CcG$, the C$^{*}$-algebra $C^*_\nu(\G_X)$ we have introduced for closed invariant $X$ does not necessarily coincide with $C^*_{r}(\G_X)$. When we do get the reduced norm on $C_c(\G_X)$ for all closed invariant subsets $X\subset\Gu$, then $\G$ is called \emph{inner exact}. By~\cite{KW}*{Theorem~5.2} every transformation groupoid $X\rtimes\Gamma$, where $\Gamma$ is an exact discrete group, is inner exact.

\subsection{Induced representations}\label{ssec:induced}

Assume $\G$ is a locally compact \'etale groupoid. Fix a unit $x\in\Gu$. Then every unitary representation $\pi\colon\Gxx\to B(H)$ of the isotropy group $\Gxx$ can be induced to a representation of $C_c(\G)$. It is convenient to have two pictures of this construction.

\smallskip

In the first picture we consider the Hilbert space $\Ind H$ of functions $\xi \colon  \G_{x} \to H$ such that
\begin{equation*}
\xi(gh)=\pi(h)^{*}\xi(g),\quad g \in \G_{x},\ h\in \G_{x}^{x},
\end{equation*}
and
\begin{equation*}
\sum_{g\in \G_{x}/\G_{x}^{x}}\lVert \xi(g)\rVert^{2}<\infty .
\end{equation*}
The induced representation $\Ind\pi=\Ind^\G_{\Gxx}\pi\colon C_c(\G)\to B(\Ind H)$ is then given by the same formula as in~\eqref{eq:rhox2}. If $\lambda_{\Gxx}$ is the left regular representation of $\Gxx$ we have a unitary equivalence $\Ind\lambda_{\Gxx}\sim\rho_x$. To see this, let $\G_x\times_{\Gxx}\Gxx$ be the quotient of $\G_x\times\Gxx$ by the equivalence relation $(gh',h)\sim (g,h'h)$  ($g\in\G_x$, $h,h'\in\Gxx$). Then $\Ind \ell^{2}(\Gxx)$ can be identified  with $\ell^{2}(\G_x\times_{\Gxx}\Gxx)$, and the canonical bijection $\G_x\times_{\Gxx}\Gxx\to\G_x$, $(g,h)\mapsto gh$, gives rise to a unitary intertwiner between $\Ind\lambda_{\Gxx}$ and~$\rho_x$.

\smallskip

For the second picture consider the restriction map
$$
\eta_x\colon C_c(\G)\to\C\Gxx=C_c(\Gxx),\quad \eta_x(f):=f|_{\Gxx},
$$
and define a $\C\Gxx$-valued sesquilinear form on $C_{c}(\G)$ by
$$
\langle f_1 ,f_2 \rangle_{\Gxx}:=\eta_x( f^{*}_1*f_2),\quad f_1,f_2 \in C_{c}(\G).
$$
Note that $\eta_x(f^{*}_1*f_2)$ depends only on the restrictions of $f_1$ and $f_2$ to $\G_x$, so we actually get a form on $C_c(\G_x)$. Then define a pre-inner product on $C_{c}(\G_{x})\otimes H$ by
$$
(f_1 \otimes \xi_1 , f_2 \otimes \xi_2):
= \langle \xi_1 , \pi(\langle f_1, f_2 \rangle_{\Gxx})\xi_2 \rangle,
$$
where we extended $\pi$ to the group algebra $\C\Gxx$ by linearity. By factoring out the subspace of zero length vectors and completing we obtain a Hilbert space $H_{0}$.

\begin{lemma}
There exists a unitary operator $U\colon H_{0} \to \Ind H$ with $U[f\otimes \eta]=\xi_{f,\eta}$ for all $f\in C_{c}(\G_{x})$ and $\eta \in H$, where
$$
\xi_{f, \eta}(g):=\sum_{h\in\Gxx}f(gh)\pi(h)\eta,\quad g\in\G_x.
$$
Under the identification of $H_0$ with $\Ind H$ we thus get, the induced representation is given by
$$
(\Ind \pi)(q) [f\otimes \eta] = [\rho_x(q)f \otimes \eta ],\quad q\in C_c(\G),\ f\in C_c(\G_x),\ \eta\in H.
$$
\end{lemma}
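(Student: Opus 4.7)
The plan is to verify all the claims by direct computation on the dense subspace $C_c(\G_x)\otimes H$ of $H_0$, exploiting the fact that since $r$ is a local homeomorphism $\G_x$ is a discrete set, so functions in $C_c(\G_x)$ are finitely supported and every sum below is finite.

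First I would check that the assignment $(f,\eta)\mapsto \xi_{f,\eta}$ indeed takes values in $\Ind H$. The equivariance $\xi_{f,\eta}(gh')=\pi(h')^{*}\xi_{f,\eta}(g)$ for $h'\in\Gxx$ follows from a change of variables $k=h'h$ inside the defining sum, using that $\pi$ is a homomorphism. The $\ell^{2}$-summability is automatic because only finitely many cosets $g\Gxx$ meet $\supp f$.

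Next I would establish that the map $f\otimes\eta\mapsto\xi_{f,\eta}$ is inner-product preserving, which is the computational heart of the lemma. Fixing representatives for $\G_x/\Gxx$ and expanding, one gets
\[
\langle\xi_{f_1,\eta_1},\xi_{f_2,\eta_2}\rangle=\sum_{g\in\G_x/\Gxx}\sum_{h_1,h_2\in\Gxx}\overline{f_1(gh_1)}f_2(gh_2)\langle\eta_1,\pi(h_1^{-1}h_2)\eta_2\rangle,
\]
and after the substitution $g'=gh_1$, $k=h_1^{-1}h_2$ the $g$-sum recombines into a free sum over all of $\G_x$, giving $\sum_{g'\in\G_x,k\in\Gxx}\overline{f_1(g')}f_2(g'k)\langle\eta_1,\pi(k)\eta_2\rangle$. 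A short computation using $(f_1^{*}\!*f_2)(k)=\sum_{g'\in\G_x}\overline{f_1(g')}f_2(g'k)$ for $k\in\Gxx$ shows this equals $\langle\eta_1,\pi(\langle f_1,f_2\rangle_{\Gxx})\eta_2\rangle$, which is the defining pre-inner product on $C_c(\G_x)\otimes H$. Consequently the map factors through $H_0$ as a well-defined isometry $U$.

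To see $U$ is unitary it remains to verify that $\Ind H$ is the closure of the image. Given any $g_0\in\G_x$ and $\eta\in H$, taking $f=\un_{\{g_0\}}\in C_c(\G_x)$ produces $\xi_{f,\eta}$ supported on the single coset $g_0\Gxx$ with value $\pi(g^{-1}g_0)\eta$ at a representative $g$ of that coset; linear combinations of such $\xi_{f,\eta}$ give all finitely supported elements of $\Ind H$, which are dense.

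Finally, for the intertwining formula I would fix $q\in C_c(\G)$, $f\in C_c(\G_x)$ and expand both $\xi_{\rho_x(q)f,\eta}(g)$ using $(\rho_x(q)f)(gh)=\sum_{h'\in\G^{r(g)}}q(h')f((h')^{-1}gh)$ (noting $r(gh)=r(g)$) and $((\Ind\pi)(q)\xi_{f,\eta})(g)=\sum_{h'\in\G^{r(g)}}q(h')\xi_{f,\eta}((h')^{-1}g)$ from \eqref{eq:rhox2}; interchanging the $h$- and $h'$-sums (legal since they are finite) shows the two expressions coincide.

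The only real obstacle is the bookkeeping in the isometry step: one must carefully separate the sum over $\G_x/\Gxx$ with chosen representatives from the sum over $\Gxx$ and verify invariance under the choice of representatives. Everything else reduces to direct verification on $C_c(\G_x)\otimes H$.
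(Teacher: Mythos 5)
Your proposal is correct and follows essentially the same route as the paper: verify that $(f,\eta)\mapsto\xi_{f,\eta}$ preserves the pre-inner product (the same coset-decomposition computation), conclude that it induces an isometry $U$ with dense image, and obtain the covariance from $(\Ind\pi)(q)\xi_{f,\eta}=\xi_{\rho_x(q)f,\eta}$. You merely spell out the details the paper leaves to the reader, namely that $\xi_{f,\eta}\in\Ind H$ and that the finitely supported elements of $\Ind H$ are in the image.
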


\bp
The map $C_{c}(\G_{x})\times H \to \Ind H$, $(f,\eta) \mapsto \xi_{f, \eta}$, is bilinear, so by the universal property of the tensor product we get a linear map $ C_{c}(\G_{x})\otimes H \to \Ind H $ that maps $f\otimes \eta$ to $\xi_{f, \eta}$. Since
\begin{align*}
( f_{1}\otimes \eta_{1} , f_{2} \otimes \eta_{2} )
&=  \Big\langle \sum_{h\in \Gxx} (f_{2}^{*}*f_{1})(h) \pi(h) \eta_{1} ,  \eta_{2} \Big \rangle
= \Big\langle \sum_{h\in \Gxx} \sum_{g\in \G_{x} } \overline{f_{2}(g)} f_{1}(gh) \pi(h) \eta_{1} ,  \eta_{2} \Big \rangle \\
&= \sum_{g\in \G_{x} /\G_{x}^{x}} \Big\langle \sum_{h_1\in \Gxx} f_{1}(gh_1) \pi(h_1) \eta_{1} , \sum_{h_2\in \Gxx} f_{2}(gh_2) \pi(h_2) \eta_{2} \Big \rangle
=\langle\xi_{f_{1}, \eta_{1}} , \xi_{f_{2}, \eta_{2}} \rangle \; ,
\end{align*}
this linear map gives rise to an isometry $U\colon H_{0} \to \Ind H$. It is not difficult to see that $U$ has dense image, so $U$ is unitary. For the last statement of the lemma notice that $(\Ind \pi) (q) \xi_{f, \eta}=\xi_{\rho_{x}(q)f, \eta}$.
\ep

\subsection{Completions of the isotropy group algebras}\label{ssec:completions}

Assume that $\G$ is a locally compact \'etale groupoid, $C^*_\nu(\G)$ is a groupoid C$^*$-algebra and $x\in\Gu$. As we saw above, if the closed set~$\{x\}$ is invariant, then the norm on $C^*_\nu(\G)$ defines a norm on $\C\Gxx$, which we continue to denote by~$\|\cdot\|_\nu$. Using induced representations we can extend this to points $x\in \Gu$ that are not necessarily invariant.

Namely, for $a\in\C\Gxx$ define
$$
\|a\|_\nu:=\sup_\pi\|\pi(a)\|,
$$
where the supremum is taken over all unitary representations $\pi$ of $\Gxx$ such that the induced representation $\Ind\pi$ of $C_c(\G)$ extends to $C^*_\nu(\G)$. The collection of such representations $\pi$ contains the regular representation  $\lambda_{\Gxx}$ of $\Gxx$, since $\Ind\lambda_{\Gxx}\sim\rho_x$. Hence $\|\cdot\|_\nu$ is a C$^*$-norm on $\C\Gxx$ dominating the reduced norm. Denote by $C^*_\nu(\Gxx)$ the completion of $\C\Gxx$ with respect to $\|\cdot\|_\nu$.

If we start with the maximal norm on $C_c(\G)$, then obviously we get the maximal C$^*$-norm on~$\C\Gxx$. On the other hand, if we start with the reduced norm, then in general, as we have already remarked, we can get a norm on $\C\Gxx$ different from the reduced one. Following~\cite{CN} we denote this norm by~$\|\cdot\|_e$, with $e$ for \emph{exotic}.

\begin{lemma}\label{lem:contraction}
The restriction map $\eta_x\colon C_c(\G)\to\C\Gxx$ extends to a contractive completely positive map
$
\vartheta_{x,\nu}\colon C^*_\nu(\G)\to C^*_\nu(\Gxx).
$
\end{lemma}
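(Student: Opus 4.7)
The plan is to exhibit $\eta_x$ as the compression of an induced representation by a single isometry, so that both contractivity and complete positivity follow from standard facts about compressions of $*$-homomorphisms. Concretely, for every unitary representation $\pi\colon\Gxx\to B(H_\pi)$ such that $\Ind\pi$ extends to $C^*_\nu(\G)$, I build an isometry $V_\pi\colon H_\pi\to\Ind H_\pi$ satisfying
\[
V_\pi^{*}\,(\Ind\pi)(q)\,V_\pi \;=\; \pi(\eta_x(q)) \qquad\text{for all } q\in C_c(\G).
\]

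For the isometry, I send $\eta\in H_\pi$ to the function on $\G_x$ supported on $\Gxx$ whose value at $h\in\Gxx$ is $\pi(h)^{*}\eta$. A direct check verifies the covariance condition defining $\Ind H_\pi$, and the norm equals $\|\eta\|$ because only the single coset $\Gxx\in\G_x/\Gxx$ contributes to the sum in the inner product. The compression identity then follows from the formula~\eqref{eq:rhox2} defining $\Ind\pi$: evaluating $(\Ind\pi)(q)V_\pi\eta$ at the unit $g=x$ of $\Gxx$, only indices $h\in\Gxx$ leave $V_\pi\eta$ nonzero, and unitarity of $\pi$ collapses the sum to $\pi(\eta_x(q))\eta$. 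Since $V_\pi\eta'$ is supported on the same coset, the $\Ind H_\pi$-inner product $\langle(\Ind\pi)(q)V_\pi\eta,V_\pi\eta'\rangle$ reduces precisely to $\langle\pi(\eta_x(q))\eta,\eta'\rangle$.

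With the compression identity in hand, the chain
\[
\|\eta_x(q)\|_\nu \;=\; \sup_\pi \|\pi(\eta_x(q))\| \;=\; \sup_\pi \|V_\pi^{*}(\Ind\pi)(q)V_\pi\| \;\le\; \sup_\pi \|(\Ind\pi)(q)\| \;\le\; \|q\|_\nu
\]
(the last step because each $\Ind\pi$ extends to a $*$-homomorphism of $C^*_\nu(\G)$, hence is contractive) shows that $\eta_x$ extends to a contraction $\vartheta_{x,\nu}\colon C^*_\nu(\G)\to C^*_\nu(\Gxx)$. For complete positivity I fix a faithful representation $\pi_0$ of $C^*_\nu(\Gxx)$ whose induced representation $\Ind\pi_0$ also extends to $C^*_\nu(\G)$, obtained by taking the direct sum of a set of representatives of all $\pi$ appearing in the definition of $\|\cdot\|_\nu$ on $\C\Gxx$; this works because $\Ind$ commutes with direct sums and the bound $\|(\Ind\pi)(q)\|\le\|q\|_\nu$ is uniform. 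Then $\pi_0\circ\vartheta_{x,\nu}=V_{\pi_0}^{*}(\Ind\pi_0)(\cdot)V_{\pi_0}$ is the compression of a $*$-homomorphism, hence completely positive, and pulling back through the $*$-isomorphism $\pi_0$ onto its image shows $\vartheta_{x,\nu}$ itself is CP. The only mildly delicate point is producing this single faithful $\pi_0$, which is just the standard set-theoretic observation above.
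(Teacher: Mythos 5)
Your proof is correct and takes essentially the same route as the paper: the paper (following \cite{CN}*{Lemma~1.2}) also realizes $\vartheta_{x,\nu}$ as the compression $a\mapsto v(\Ind\pi)(a)v^*$ for a single faithful representation $\pi$ of $C^*_\nu(\Gxx)$ with $\Ind\pi$ extending to $C^*_\nu(\G)$ (obtained via closure under direct sums), where the coisometry $v\xi=\xi(x)$ is exactly the adjoint of your isometry $V_\pi$. Your version merely spells out the verification of the compression identity and the contractivity estimate in more detail.
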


\bp
This is proved identically to \cite{CN}*{Lemma~1.2}: since the collection of unitary representations~$\pi$ of $\Gxx$ such that $\Ind\pi$ extends to $C_{\nu}^{*}(\G)$ is closed under direct sums, we can take a faithful representation $\pi$ of $C^*_\nu(\Gxx)$ on $H$ such that $\Ind \pi$ extends to $C_{\nu}^{*}(\G)$. If we identify~$C^*_\nu(\Gxx)$ with $\pi(C^*_\nu(\Gxx))$, then the map $\vartheta_{x,\nu}$ is given by $a\mapsto v(\Ind\pi)(a)v^*$, where $v\colon\Ind H\to H$ is the coisometry given by $v\xi=\xi(x)$.
\ep

For future reference we will write out the following corollary, which follows directly from the proof of Lemma~\ref{lem:contraction}.

\begin{cor} \label{cor:contraction}
If $\pi$ is a representation of $C^*_\nu(\Gxx)$ on a Hilbert space $H$ and $v\colon\Ind H\to H$ is the coisometry given by $v\xi=\xi(x)$ for $\xi \in \Ind H$, then
$
\vartheta_{x,\nu}\colon C^*_\nu(\G)\to C^*_\nu(\Gxx)
$
has the property that
$
v(\Ind\pi)(a)v^* = \pi(\vartheta_{x,\nu}(a))
$
for all $a\in C_{\nu}^{*}(\G)$.
\end{cor}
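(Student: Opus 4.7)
The plan is to verify the identity first on the dense subspace $C_c(\G) \subset C^*_\nu(\G)$ by a direct computation, and then extend by continuity. The key observation is that the computation of $v(\Ind\pi)(f)v^*$ for $f\in C_c(\G)$ goes through for any $\pi$, not just the faithful one used in the proof of Lemma~\ref{lem:contraction}; and recall from that construction that $\vartheta_{x,\nu}$ extends the restriction map $\eta_x$, so $\vartheta_{x,\nu}(f) = f|_{\Gxx}$ for $f\in C_c(\G)$.

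First I would write down $v^*$ explicitly. For $\eta \in H$, the function $v^*\eta \in \Ind H$ is supported on the coset $\Gxx \subset \G_x$ and given by $(v^*\eta)(h) = \pi(h)^*\eta$ for $h \in \Gxx$. The covariance relation $(v^*\eta)(gh) = \pi(h)^*(v^*\eta)(g)$ follows from $\pi(gh)^* = \pi(h)^*\pi(g)^*$, and the identity $\langle v^*\eta, \xi\rangle_{\Ind H} = \langle \eta, \xi(x)\rangle_H = \langle\eta,v\xi\rangle_H$ is immediate since only the coset $\Gxx$ contributes to the sum defining the inner product on $\Ind H$.

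Then, for $f \in C_c(\G)$, using the formula for $(\Ind\pi)(f)$ analogous to \eqref{eq:rhox2} and evaluating at the base point $x$,
\[
v(\Ind\pi)(f)v^*\eta = \bigl((\Ind\pi)(f)v^*\eta\bigr)(x) = \sum_{h \in \G^x} f(h)(v^*\eta)(h^{-1}).
\]
Only terms with $h \in \Gxx$ contribute, giving $\sum_{h \in \Gxx} f(h)\pi(h)\eta = \pi(\eta_x(f))\eta = \pi(\vartheta_{x,\nu}(f))\eta$.

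Finally, since $\pi$ extends to $C^*_\nu(\Gxx)$, the induced representation $\Ind\pi$ extends to $C^*_\nu(\G)$ by the way the norm on $\C\Gxx$ was set up in Subsection~\ref{ssec:completions}: decomposing $\pi$ into cyclic subrepresentations, each cyclic summand is weakly contained in the defining family of representations, so the same holds after induction, and therefore $\|\Ind\pi(f)\| \leq \|f\|_\nu$ on $C_c(\G)$. Thus both $a \mapsto v(\Ind\pi)(a)v^*$ and $a \mapsto \pi(\vartheta_{x,\nu}(a))$ are norm-continuous maps $C^*_\nu(\G) \to B(H)$ which agree on the dense subspace $C_c(\G)$, so they agree throughout. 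The only mildly delicate point in the argument is the weak-containment step needed to see that $\Ind\pi$ extends to $C^*_\nu(\G)$; everything else is a routine density calculation.
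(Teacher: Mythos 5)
Your proposal is correct and is essentially the paper's own route: the paper obtains the corollary directly from the construction of $\vartheta_{x,\nu}$ in the proof of Lemma~\ref{lem:contraction} (following \cite{CN}*{Lemma~1.2}), which rests on exactly your compression identity $v(\Ind\pi)(f)v^*=\pi(\eta_x(f))$ for $f\in C_c(\G)$ followed by continuity. The one step you assert rather than prove --- that $\Ind\pi$ is $\|\cdot\|_\nu$-bounded for an arbitrary representation $\pi$ of $C^*_\nu(\Gxx)$ --- is indeed needed, but instead of invoking weak-containment continuity of induction you can close it with tools already in the paper: reduce to cyclic $\pi=\pi_\varphi$ and note, as in the proof of Theorem~\ref{thmnorm}, that by \cite{CN}*{Lemma~1.3} the representation $\Ind\pi_\varphi$ is the GNS representation of the positive functional $\varphi\circ\vartheta_{x,\nu}$, which is bounded on $C^*_\nu(\G)$ since $\vartheta_{x,\nu}$ is a contraction.
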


As in \cite{CN}, we denote the maps $\vartheta_{x,\nu}$ for the maximal and reduced norms on $C_c(\G)$ by
$$
\vartheta_{x}\colon C^*(\G)\to C^*(\Gxx)\qquad\text{and}\qquad \vartheta_{x,e}\colon C^*_r(\G)\to C^*_e(\Gxx),
$$
respectively.

By~\cite{CN}*{Lemma~1.4}, we have the following useful property.

\begin{lemma} \label{lem:multdomain}
Assume that $g_{1}, g_{2}, \dots, g_{n}$ are distinct points in $\Gxx$ and $\{W_{i}\}_{i=1}^{n}$ is a family of open bisections of $\G$ with $g_{i}\in W_{i}$ for each $i$.
Assume $f \in C_{c}(\G)$ is zero outside $\bigcup_{i=1}^{n} W_{i}$. Then~$f$ lies in the multiplicative domain of $\vartheta_{x,\nu}$.
\end{lemma}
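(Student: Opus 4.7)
The strategy is to use the explicit description of $\vartheta_{x,\nu}$ provided by Corollary~\ref{cor:contraction}, and show that the bisection assumption forces $(\Ind\pi)(f)$ to preserve a certain subspace of $\Ind H$, from which the multiplicative-domain identities follow automatically.

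First, I fix a faithful representation $\pi$ of $C^*_\nu(\Gxx)$ on $H$ such that $\Ind\pi$ extends to $C^*_\nu(\G)$, and the coisometry $v\colon\Ind H\to H$, $v\xi=\xi(x)$, so that $\vartheta_{x,\nu}(a)=v(\Ind\pi)(a)v^*$ via Corollary~\ref{cor:contraction}. Since $vv^*=I$, the projection $p:=v^*v$ is onto the image $v^*H\subset\Ind H$. A direct computation with the defining equivariance $\xi(gh)=\pi(h)^*\xi(g)$ identifies $v^*H$ as precisely the subspace of functions $\xi\in\Ind H$ that vanish on $\G_x\setminus\Gxx$ (viewing $\Gxx$ as the coset of $x$ itself). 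Given this, an element $a\in C^*_\nu(\G)$ lies in the multiplicative domain of $\vartheta_{x,\nu}$ as soon as both $(\Ind\pi)(a)$ and $(\Ind\pi)(a^*)$ leave $v^*H$ invariant, because then $(I-p)(\Ind\pi)(a)v^*=0$ (and likewise for $a^*$), which forces equality between $v(\Ind\pi)(a^*a)v^*$ and $\vartheta_{x,\nu}(a^*)\vartheta_{x,\nu}(a)$, and symmetrically for $aa^*$.

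The main step is to verify invariance of $v^*H$ under $(\Ind\pi)(f)$ (and, applying the same argument to $f^*$ with the bisections $W_i^{-1}$ containing $g_i^{-1}\in\Gxx$, under $(\Ind\pi)(f^*)$). By formula~\eqref{eq:rhox2}, for $\eta\in H$ and $g\in\G_x$,
\begin{equation*}
\bigl((\Ind\pi)(f)v^*\eta\bigr)(g)=\sum_{h\in\G^{r(g)}}f(h)\,(v^*\eta)(h^{-1}g).
\end{equation*}
Only terms with $h^{-1}g\in\Gxx$ contribute, which forces $s(h)=x$. Since $f$ is supported in $\bigcup_i W_i$ and each $W_i$ is a bisection, the condition $s(h)=x$ together with $h\in W_i$ singles out the unique element $h=g_i$ of $W_i$ with source $x$. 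Thus only $h=g_1,\dots,g_n$ can contribute, all of which satisfy $r(h)=x$, forcing $r(g)=x$, i.e.\ $g\in\Gxx$. Hence $(\Ind\pi)(f)v^*\eta$ vanishes off $\Gxx$, which exactly says $(\Ind\pi)(f)v^*\eta\in v^*H$.

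The only subtle point is the identification of $v^*H$ and the careful bookkeeping with the bisection assumption; once those are pinned down, the multiplicative-domain identities drop out algebraically from $(I-p)(\Ind\pi)(f)v^*=0$. I would expect the proof to be essentially the same as that of \cite{CN}*{Lemma~1.4}, which treats the two specific norms $\|\cdot\|$ and $\|\cdot\|_r$; the passage to a general C$^*$-norm $\|\cdot\|_\nu$ is automatic thanks to the uniform description of $\vartheta_{x,\nu}$ afforded by Corollary~\ref{cor:contraction}.
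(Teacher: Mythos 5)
Your argument is correct: the identification of $v^*H$ with the functions in $\Ind H$ vanishing off $\Gxx$, the bisection bookkeeping showing $(\Ind\pi)(f)$ and $(\Ind\pi)(f^*)$ preserve this subspace, and the resulting equalities $\vartheta_{x,\nu}(f^*f)=\vartheta_{x,\nu}(f)^*\vartheta_{x,\nu}(f)$, $\vartheta_{x,\nu}(ff^*)=\vartheta_{x,\nu}(f)\vartheta_{x,\nu}(f)^*$ all check out, and Choi's characterization of the multiplicative domain for the contractive c.p. map $\vartheta_{x,\nu}$ finishes the proof. The paper itself gives no argument beyond citing \cite{CN}*{Lemma~1.4}, and your compression/invariant-subspace computation is essentially that proof, transported to a general norm $\|\cdot\|_\nu$ via Corollary~\ref{cor:contraction} exactly as you indicate.
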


\begin{cor}\label{cor:surj}
If $\mathfrak M_{x,\nu}$ is the multiplicative domain of the map $\vartheta_{x,\nu}\colon C^*_\nu(\G)\to C^*_\nu(\Gxx)$, then $\vartheta_{x,\nu}(\mathfrak M_{x,\nu})=C^*_\nu(\Gxx)$. 
\end{cor}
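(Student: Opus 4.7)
The plan is to combine Lemma~\ref{lem:multdomain} with the standard fact that the multiplicative domain of the c.c.p.\ map $\vartheta_{x,\nu}$ is a C$^*$-subalgebra $\mathfrak{M}_{x,\nu}\subset C^*_\nu(\G)$ and that $\vartheta_{x,\nu}|_{\mathfrak{M}_{x,\nu}}$ is a $*$-homomorphism into $C^*_\nu(\Gxx)$. Since the image of a $*$-homomorphism between C$^*$-algebras is automatically closed, $\vartheta_{x,\nu}(\mathfrak{M}_{x,\nu})$ is a C$^*$-subalgebra of $C^*_\nu(\Gxx)$. It therefore suffices to show that this image is dense, and by the definition of $C^*_\nu(\Gxx)$ it is enough to check that $\C\Gxx\subset \vartheta_{x,\nu}(\mathfrak{M}_{x,\nu})$.

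To this end, fix an arbitrary $a=\sum_{i=1}^n c_i\delta_{g_i}\in\C\Gxx$ with the $g_i\in\Gxx$ pairwise distinct, and for each $i$ choose an open bisection $W_i\subset\G$ with $g_i\in W_i$. Because $s|_{W_i}$ is injective and every element of $\Gxx$ has source $x$, we have $W_i\cap\Gxx=\{g_i\}$, and in particular $W_i\cap W_j\cap\Gxx=\emptyset$ for $i\neq j$. Pick $f_i\in C_c(W_i)$ with $f_i(g_i)=c_i$ and set $f:=\sum_{i=1}^n f_i\in C_c(\G)$. Then $f$ vanishes outside $\bigcup_i W_i$, so by Lemma~\ref{lem:multdomain} we have $f\in\mathfrak{M}_{x,\nu}$. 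On the other hand $\vartheta_{x,\nu}(f)$ coincides with $\eta_x(f)=f|_{\Gxx}$, and from the disjointness observation above we compute $f|_{\Gxx}=\sum_i c_i\delta_{g_i}=a$, as required.

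Combining the two steps gives $\vartheta_{x,\nu}(\mathfrak{M}_{x,\nu})\supseteq\overline{\C\Gxx}=C^*_\nu(\Gxx)$, and the reverse inclusion is trivial. I do not expect a real obstacle here: the substantive content was already placed in Lemma~\ref{lem:multdomain}, and the remaining issues are the (standard) multiplicative-domain facts and the verification that the bisections $W_i$ can be chosen so that $\eta_x$ applied to $f=\sum f_i$ recovers $a$ exactly, which follows from the injectivity of $s$ on each bisection.
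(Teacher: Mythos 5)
Your proof is correct and follows essentially the same route as the paper: the paper's own (one-line) argument likewise deduces from Lemma~\ref{lem:multdomain} that the C$^*$-algebra $\vartheta_{x,\nu}(\mathfrak M_{x,\nu})$ contains $\C\Gxx$, hence is dense, and then uses that it is closed to conclude equality. You have merely written out the details the paper leaves implicit (the multiplicative domain being a C$^*$-subalgebra on which $\vartheta_{x,\nu}$ is a $*$-homomorphism with closed image, and the explicit choice of $f=\sum_i f_i$ with $\eta_x(f)=a$), all of which are accurate.
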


\bp
By the previous lemma the C$^*$-algebra $\vartheta_{x,\nu}(\mathfrak M_{x,\nu})$ is dense in $C^*_\nu(\Gxx)$, hence it coincides with $C^*_\nu(\Gxx)$.
\ep

The following description of the norm on $\C\Gxx$ will be crucial for us.

\begin{thm}\label{thmnorm}
Assume $\G$ is a locally compact \'{e}tale groupoid, $C^*_\nu(\G)$ is a groupoid C$^*$-algebra and $x\in\Gu$. Then, for every $a\in \C\Gxx$, we have
\begin{equation}\label{eq:norm-inf}
\lVert a \rVert_{\nu} = \inf\left\{ \lVert f \rVert_{\nu} : f\in C_{c}(\G),\ \eta_{x}(f)=a    \right\}.
\end{equation}
Furthermore, let $\V$ be a neighbourhood base at $x$ partially ordered by containment. For each $V\in\V$, choose a function $q_V\in C_c(\Gu)$ such that $q_V(x)=1$, $0\le q_V\le 1$ and $\supp q_V\subset V$. For $a\in \C\Gxx$, choose any function $f\in C_c(\G)$ such that $\eta_x(f)=a$. Then
\begin{equation}\label{eq:e-norm-limit}
\|a\|_\nu=\lim_{V\in\V}\|q_V*f*q_V\|_\nu.
\end{equation}
\end{thm}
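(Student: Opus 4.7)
Plan: The plan is to prove the limit statement \eqref{eq:e-norm-limit} first; \eqref{eq:norm-inf} will then follow because $q_V*f*q_V$ is itself a lift of $a$ and so $\inf\le\lim\|q_V*f*q_V\|_\nu=\|a\|_\nu$. The lower bound $\|a\|_\nu\le\|q_V*f*q_V\|_\nu$ (and $\|a\|_\nu\le\|f\|_\nu$ for every lift) is immediate from Lemma~\ref{lem:contraction}: since $q_V(x)=1$ we have $\vartheta_{x,\nu}(q_V*f*q_V)=\eta_x(q_V*f*q_V)=a$, and $\vartheta_{x,\nu}$ is contractive.

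For the upper bound I would first establish lift-independence: for any two lifts $f_1,f_2\in C_c(\G)$ of $a$, one has $\|q_V*(f_1-f_2)*q_V\|_\nu\to 0$. Write the finite set $\Gxx\cap\supp(f_1-f_2)$ as $\{g_1,\dots,g_n\}$ and enclose these points in pairwise disjoint open bisections $W_i\ni g_i$; a compactness argument on the continuous map $(r,s)\colon\supp(f_1-f_2)\setminus\bigcup W_i\to\Gu\times\Gu$, which avoids $(x,x)$, shows $\supp(q_V*(f_1-f_2)*q_V)\subset\bigcup W_i$ for $V$ small enough. The value of this function at each $g_i$ is $q_V(x)^2(f_1-f_2)(g_i)=0$, so its restriction to each bisection $W_i$ is a compactly supported continuous function vanishing at $g_i$ with shrinking support; by \eqref{eq:norm-bisection} its $\nu$-norm equals its sup norm and tends to $0$ by continuity. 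The same argument shows the limit, once it exists, does not depend on the choice of the $q_V$'s.

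It thus suffices to exhibit, for each $\epsilon>0$, a single lift $f_\epsilon\in C_c(\G)$ of $a$ with $\|f_\epsilon\|_\nu\le\|a\|_\nu+\epsilon$: once this is done, $\|q_V*f_\epsilon*q_V\|_\nu\le\|f_\epsilon\|_\nu\le\|a\|_\nu+\epsilon$ for every $V$, and by lift-independence the limit of $\|q_V*f*q_V\|_\nu$ for the original $f$ equals that of $\|q_V*f_\epsilon*q_V\|_\nu$, which is at most $\|a\|_\nu+\epsilon$; letting $\epsilon\to 0$ and combining with the lower bound gives the limit $\|a\|_\nu$. To construct $f_\epsilon$ I would use that $\vartheta_{x,\nu}$ restricts to a surjective $\ast$-homomorphism $\mathfrak{M}_{x,\nu}\to C^*_\nu(\Gxx)$ (Corollary~\ref{cor:surj}), yielding the quotient-norm identity $\|a\|_\nu=\inf\{\|y\|_\nu:y\in\mathfrak{M}_{x,\nu},\ \vartheta_{x,\nu}(y)=a\}$. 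Choose such a $y_\epsilon$ with $\|y_\epsilon\|_\nu\le\|a\|_\nu+\epsilon/2$, approximate it in $\|\cdot\|_\nu$ by some $\tilde f\in C_c(\G)$ supported in $\bigcup W_i$ (which forces $\tilde f\in\mathfrak{M}_{x,\nu}$ by Lemma~\ref{lem:multdomain}, and automatically makes $\eta_x(\tilde f)$ supported in $\{g_1,\dots,g_n\}$ since each $W_i$ meets $\Gxx$ only at $g_i$), and set
\[
f_\epsilon:=\tilde f+\sum_{i=1}^n\bigl(a(g_i)-\eta_x(\tilde f)(g_i)\bigr)\chi_i
\]
for bumps $\chi_i\in C_c(W_i)$ with $\chi_i(g_i)=1$. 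Then $\eta_x(f_\epsilon)=a$, and the $\nu$-norm of the correction is at most $n\,\|y_\epsilon-\tilde f\|_\nu$ by \eqref{eq:norm-bisection} combined with the $\nu$-continuity of the evaluation functionals $b\mapsto b(g_i)$ on $C^*_\nu(\Gxx)$ (these have norm $\le 1$ via the left regular representation).

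The main obstacle will be the density step in the approximation: one must find $\tilde f\in C_c(\G)$ supported in $\bigcup W_i$ approximating $y_\epsilon\in\mathfrak{M}_{x,\nu}$ in $\|\cdot\|_\nu$. I would address this by first cutting $y_\epsilon$ with elements of $C_0(\Gu)\subset\mathfrak{M}_{x,\nu}$ that vanish outside a small neighbourhood of $x$, thereby localizing its effective support near $\{g_1,\dots,g_n\}$, and then approximating bisection-by-bisection using the usual density of bisection bump functions in the subalgebra of $\mathfrak{M}_{x,\nu}$ generated by elements of that form.
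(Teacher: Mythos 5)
Your lower bound and the lift-independence of the limit are fine and run parallel to the paper's argument (cf.\ \cite{CN}*{Lemma~2.5}), and the quotient-norm identity $\|a\|_\nu=\inf\{\|y\|_\nu: y\in\mathfrak M_{x,\nu},\ \vartheta_{x,\nu}(y)=a\}$ is a correct consequence of Corollary~\ref{cor:surj}. The gap is precisely the step you defer to the end: approximating a near-optimal lift $y_\eps\in\mathfrak M_{x,\nu}$ in $\|\cdot\|_\nu$ by some $\tilde f\in C_c(\G)$ supported in $\bigcup_iW_i$. There is no reason for $y_\eps$ to lie in, or within a useful distance of, the closed subspace $\overline{C_c(\bigcup_iW_i)}$: the multiplicative domain contains every $y$ with $\vartheta_{x,\nu}(y^*y)=\vartheta_{x,\nu}(yy^*)=0$, for instance limits of functions vanishing on $r^{-1}(x)\cup s^{-1}(x)$, and these have nothing to do with functions carried by finitely many bisections through $\Gxx$. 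Your proposed fix, cutting by $q\in C_0(\Gu)$ supported near $x$, only confines supports to $\G^V_V$ with $V=\supp q$, not to a neighbourhood of $\Gxx$: already for a minimal transformation groupoid $X\rtimes\mathbb Z$ and $x$ with trivial isotropy, $\G^V_V$ contains arrows with arbitrarily large group part for every $V$. Enlarging the target to the C$^*$-algebra generated by bump functions on bisections through points of $\Gxx$ does not help either: one then has to approximate within that algebra by honest $C_c$-functions and correct the isotropy coefficients at all points of $\Gxx$ met by the approximant, whose number is finite but not controlled, so the error estimate $n\,\|y_\eps-\tilde f\|_\nu$ degenerates.

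This is not a repairable technicality within your toolkit. Every ingredient you invoke --- contractivity of $\vartheta_{x,\nu}$ (Lemma~\ref{lem:contraction}), Lemma~\ref{lem:multdomain}, Corollary~\ref{cor:surj}, the bisection identity \eqref{eq:norm-bisection}, boundedness of the point evaluations on $C^*_\nu(\Gxx)$ by the reduced norm, cutting by $C_0(\Gu)$ --- holds verbatim for the composition $\vartheta_{x,r}\colon C^*_r(\G)\to C^*_e(\Gxx)\to C^*_r(\Gxx)$ in place of $\vartheta_{x,\nu}$. If your scheme could be completed with these tools, it would therefore also prove $\inf\{\|f\|_r: f\in C_c(\G),\ \eta_x(f)=a\}=\|a\|_{C^*_r(\Gxx)}$, contradicting \eqref{eq:norm-inf} for $\nu=r$ (where the infimum equals $\|a\|_e$) whenever the map $C^*_e(\Gxx)\to C^*_r(\Gxx)$ is not injective, as happens in the examples of \cite{HLS} and \cite{W}. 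Any correct proof must use the defining family of representations of $\|\cdot\|_\nu$ on $\C\Gxx$. That is what the paper does: it shows the infimum/limit defines a C$^*$-norm $\|\cdot\|_{\nu'}$ dominating $\|\cdot\|_\nu$, and then, for a state $\varphi$ on $\C\Gxx$ bounded with respect to $\|\cdot\|_{\nu'}$, observes that $\varphi\circ\eta_x$ is $\|\cdot\|_\nu$-bounded on $C_c(\G)$ with GNS representation $\Ind\pi_\varphi$ (\cite{CN}*{Lemma~1.3}), so $\pi_\varphi$ belongs to the family defining $\|\cdot\|_\nu$ and hence $\|\cdot\|_{\nu'}\le\|\cdot\|_\nu$. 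Your outline has no substitute for this step.
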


\bp
The proof is similar to that of~\cite{CN}*{Theorem 2.4} for the reduced norm on $C_c(\G)$, so we will be sketchy.

First one proves that for any $a\in\C\Gxx$ the limit in~\eqref{eq:e-norm-limit} exists and equals the right hand side of~\eqref{eq:norm-inf}, which defines a seminorm~$\|\cdot\|_{\nu'}$ on $\C\Gxx$. This is proved exactly as \cite{CN}*{Lemma~2.5} using property~\eqref{eq:norm-bisection} and Lemma~\ref{lem:multdomain}. As~$\vartheta_{x,\nu}$ is a contraction, $\|\cdot\|_{\nu'}$ is actually a norm dominating $\|\cdot\|_\nu$. Then one checks, using  Lemma~\ref{lem:multdomain}, that $\|\cdot\|_{\nu'}$ is a C$^*$-norm in the same way as in \cite{CN}*{Lemma~2.6}.

To prove that $\|\cdot\|_{\nu'}=\|\cdot\|_\nu$, we will show that if $\varphi$ is a state on $\C\Gxx$ bounded with respect to~$\|\cdot\|_{\nu'}$, then it is also bounded with respect to~$\|\cdot\|_\nu$. We start by observing that by the definition of $\|\cdot\|_{\nu'}$, if $\varphi$ is a state on $\C\Gxx$ bounded with respect to $\|\cdot\|_{\nu'}$, then $\varphi\circ\eta_x$ is a positive linear functional on $C_c(\G)$ bounded with respect to $\|\cdot\|_\nu$. By \cite{CN}*{Lemma~1.3} the corresponding GNS-representation of $C_c(\G)$, which is bounded with respect to $\|\cdot\|_\nu$, is the induction $\Ind\pi_\varphi$ of the GNS-representation~$\pi_\varphi$ (viewed as a representation of~$\Gxx$). Therefore~$\pi_\varphi$ lies in the collection of representations defining the norm $\|\cdot\|_\nu$ on $\C\Gxx$. Hence $\varphi$ is bounded with respect to this norm, and since this is true for all $\varphi$, we must have $\|\cdot\|_{\nu'}=\|\cdot\|_\nu$.
\ep

Similarly to \cite{CN}*{Corollaries~2.7 and~2.8} we then get the following result, which can be viewed as a substitute for Proposition~\ref{prop:exact}.

\begin{cor}\label{cor:exact}
The space $C_{c}(\G\setminus\Gxx)$ is dense in $\ker(\vartheta_{x,\nu}\colon C^*_\nu(\G)\to C^*_\nu(\Gxx))$.
\end{cor}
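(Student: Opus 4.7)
The plan is to adapt the argument of~\cite{CN}*{Corollaries~2.7 and~2.8}, which established the analogous statement for the reduced norm; the proof transfers almost verbatim once Theorem~\ref{thmnorm} is available. The easy inclusion $\overline{C_c(\G\setminus\Gxx)}\subset\ker\vartheta_{x,\nu}$ follows from the observation that $\Gxx=r^{-1}(x)\cap s^{-1}(x)$ is closed in $\G$ (because $\Gu$ is Hausdorff), so $\G\setminus\Gxx$ is open and $C_c(\G\setminus\Gxx)$ embeds into $C_c(\G)$ as functions vanishing on $\Gxx$, hence into $\ker\eta_x\subset\ker\vartheta_{x,\nu}$.

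For the reverse inclusion, I would fix $b\in\ker\vartheta_{x,\nu}$ and $\varepsilon>0$, and pick $f\in C_c(\G)$ with $\|b-f\|_\nu<\varepsilon$. Since $\vartheta_{x,\nu}$ is a contraction, $\|\eta_x(f)\|_\nu=\|\vartheta_{x,\nu}(f-b)\|_\nu<\varepsilon$. The limit formula~\eqref{eq:e-norm-limit} then supplies a cut-off $q_V\in C_c(\Gu)$ with $q_V(x)=1$, $0\le q_V\le1$, and $\|q_V*f*q_V\|_\nu<2\varepsilon$. Using local compactness of $\Gu$, I would in addition arrange $q_V\equiv 1$ on some open neighbourhood $V'$ of $x$ (via Urysohn). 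Then $g:=f-q_V*f*q_V$ satisfies $\|b-g\|_\nu<3\varepsilon$, and everything reduces to showing $g\in C_c(\G\setminus\Gxx)$.

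To verify this, write $f=\sum_i f_i$ with $f_i\in C_c(W_i)$ for open bisections $W_i\subset\G$. A direct computation using $q_V\in C_c(\Gu)$ in the convolution formula gives $(q_V*f_i*q_V)(h)=q_V(r(h))q_V(s(h))f_i(h)$, whence each summand $g_i:=f_i-q_V*f_i*q_V$ still lies in $C_c(W_i)$ and vanishes on the open set $W_i\cap r^{-1}(V')\cap s^{-1}(V')$. Since $W_i$ is a bisection, $W_i\cap\Gxx$ contains at most one point $g_i^\circ$, and whenever it exists that point lies in the vanishing set above. Hence the compact set $\supp g_i$ is contained in the open Hausdorff subset $W_i\setminus(W_i\cap\Gxx)$ of $\G\setminus\Gxx$, so $g_i\in C_c(\G\setminus\Gxx)$ and therefore $g=\sum_i g_i\in C_c(\G\setminus\Gxx)$.

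The step that requires real care is the last one: for a non-Hausdorff groupoid, merely knowing that $g$ vanishes pointwise on $\Gxx$ is \emph{not} enough to conclude $g\in C_c(\G\setminus\Gxx)$. The two ingredients that rescue the argument are (i) decomposing $f$ into bisection-supported pieces, so that $W_i\cap\Gxx$ shrinks to a single point, and (ii) inflating the cut-off $q_V$ to be identically $1$ on a whole open neighbourhood of $x$, which forces each $g_i$ to vanish on an open neighbourhood of $g_i^\circ$ in $W_i$ and thereby separates $\supp g_i$ from $\Gxx$.
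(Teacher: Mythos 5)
Your argument is correct and is essentially the paper's intended proof: the paper simply invokes the analogue of \cite{CN}*{Corollaries~2.7 and~2.8}, which rests on exactly the ingredients you use, namely the limit formula \eqref{eq:e-norm-limit} from Theorem~\ref{thmnorm} together with a cut-off $q_V$ chosen identically $1$ near $x$, so that $f-q_V*f*q_V$ lies in $C_c(\G\setminus\Gxx)$ piece by piece. Your care about the non-Hausdorff issue (vanishing on $\Gxx$ versus membership in $C_c(\G\setminus\Gxx)$) is exactly the right point; note only that the decomposition into open Hausdorff pieces already suffices for this, since every point of $W_i\cap\Gxx$ has range and source equal to $x$, so the bisection property is not actually needed.
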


Note that the space $C_{c}(\G\setminus\Gxx)$ here is defined in the same way as $C_c(\G)$.

\begin{remark} \label{rem:groupbundle}
If $\G$ is a group bundle, meaning that $\G^y_x=\emptyset$ for all $y\ne x$, then the C$^*$-algebra~$C^*_\nu(\G)$ is a $C_0(\Gu)$-algebra. Corollary~\ref{cor:exact} or Proposition~\ref{prop:exact} imply then that $C^*_\nu(\Gxx)$ is the fiber of~$C^*_\nu(\G)$ at~$x$.\ee
\end{remark}

Next, assume $\HH\subset\G$ is an open subgroupoid. Denote by $C^*_\nu(\mathcal H)$ the closure of $C_c(\mathcal H)$ in~$C^*_\nu(\G)$. This is a groupoid C$^*$-algebra in the sense of Definition~\ref{def:groupoid-algebra}, since $C^*_r(\mathcal H)\subset C^*_r(\G)$ by \cite{MR2134336}*{Proposition~1.9}. We remark that the groupoids in~\cite{MR2134336} are assumed to be Hausdorff, but this is not used in the proof of this result. Observe then that identity~\eqref{eq:e-norm-limit} implies that for $x\in\HH^{(0)}$ the norm on~$\C\HH^x_x$ defined by $C^*_\nu(\HH)$ coincides with the restriction of the norm on $\C\Gxx$ defined by $C^*_\nu(\G)$.

\subsection{Factorization of states through isotropy groups}

We have the following characterization of states arising from isotropy groups.

\begin{prop} \label{prop:pointeval}
Assume $\G$ is a locally compact \'{e}tale groupoid, $C^*_\nu(\G)$ is a groupoid C$^*$-algebra and $x\in\Gu$.
Let $\psi$ be a state on $C_{\nu}^{*}(\mathcal{G})$. Then
$
\psi=\varphi \circ \vartheta_{x,\nu}
$
for a state $\varphi$ on $C_{\nu}^{*}(\Gxx)$ if and only if the restriction of $\psi$ to $C_{0}(\Gu)$ is the point-evaluation~$\ev_x$.
\end{prop}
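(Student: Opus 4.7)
For the forward direction, $\vartheta_{x,\nu}$ extends $\eta_x$, and for $h \in C_0(\Gu)$ we have $\eta_x(h) = h|_{\Gxx \cap \Gu} = h(x)$ times the unit of $\C\Gxx$. Since $\varphi$ is a state on the unital C$^*$-algebra $C^*_\nu(\Gxx)$, it follows that $\varphi(\vartheta_{x,\nu}(h)) = h(x)$, so $\psi|_{C_0(\Gu)} = \ev_x$.

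For the converse, the plan is to show that $\psi$ vanishes on $\ker \vartheta_{x,\nu}$. Once this is done, surjectivity of $\vartheta_{x,\nu}$ (Corollary~\ref{cor:surj}) produces a unique linear functional $\varphi$ on $C^*_\nu(\Gxx)$ with $\psi = \varphi \circ \vartheta_{x,\nu}$. That $\varphi$ is a state is then routine: for positivity use the multiplicative domain $\mathfrak M_{x,\nu}$ by writing a positive $b \in C^*_\nu(\Gxx)$ as $c^*c$, lifting $c$ to some $a \in \mathfrak M_{x,\nu}$, and observing $\varphi(b) = \psi(a^*a) \ge 0$; for the normalisation $\varphi(1) = 1$, apply $\vartheta_{x,\nu}$ to an approximate unit drawn from $C_c(\Gu)$ and use $\vartheta_{x,\nu}(e_i) = e_i(x) \cdot 1 \to 1$ together with $\psi(e_i) \to 1$. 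By Corollary~\ref{cor:exact}, it therefore suffices to prove that $\psi(f) = 0$ for every $f \in C_c(\G \setminus \Gxx)$.

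The key device is a Cauchy--Schwarz squeeze against the cutoff net $\{q_V\}_{V \in \V}$ of Theorem~\ref{thmnorm}. Extending $\psi$ to its canonical state $\tilde\psi$ on the unitization of $C^*_\nu(\G)$, the restriction of $\tilde\psi$ to the unitization of $C_0(\Gu)$ is the canonical extension of $\ev_x$, and hence $\tilde\psi((1 - q_V)^2) = (1 - q_V(x))^2 = 0$. Cauchy--Schwarz then forces $\tilde\psi((1 - q_V)a) = 0$ and, by the mirror-image argument, $\tilde\psi(a(1 - q_V)) = 0$ for every $a \in C^*_\nu(\G)$. Therefore $\psi(q_V * f * q_V) = \psi(f)$ for all $V$. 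Now $f \in C_c(\G \setminus \Gxx)$ satisfies $\eta_x(f) = 0$, so identity~\eqref{eq:e-norm-limit} of Theorem~\ref{thmnorm} gives $\|q_V * f * q_V\|_\nu \to 0$, whence
\[
|\psi(f)| = |\psi(q_V * f * q_V)| \le \|q_V * f * q_V\|_\nu \longrightarrow 0,
\]
so $\psi(f) = 0$ as required.

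The only slightly delicate point is performing the Cauchy--Schwarz step in the (generally non-unital) algebra $C^*_\nu(\G)$, which is why one passes to the unitization; once that is set up, the rest of the argument amounts to bookkeeping and the state check for $\varphi$.
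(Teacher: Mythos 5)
Your argument is correct, and its engine is the same as the paper's: the identity $\psi(q_V*f*q_V)=\psi(f)$, which you obtain by Cauchy--Schwarz in the unitization and which the paper phrases as ``$C_0(\Gu)$ lies in the multiplicative domain of $\psi$'', combined with the limit formula~\eqref{eq:e-norm-limit} of Theorem~\ref{thmnorm}. Where you diverge is the endgame. The paper applies the resulting estimate $|\psi(f)|\le\|q_V*f*q_V\|_\nu$ to \emph{every} $f\in C_c(\G)$, getting $|\psi(f)|\le\|\eta_x(f)\|_\nu$ at once; this immediately yields a well-defined norm-one functional $\varphi$ on $C^*_\nu(\Gxx)$ with $\psi=\varphi\circ\vartheta_{x,\nu}$, and positivity is free from $\varphi(1)=1=\|\varphi\|$. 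You instead use the estimate only on $C_c(\G\setminus\Gxx)$ to conclude that $\psi$ annihilates $\ker\vartheta_{x,\nu}$ (via Corollary~\ref{cor:exact}), then reconstruct $\varphi$ through surjectivity and multiplicative-domain lifts (Corollary~\ref{cor:surj}), verifying positivity by lifting $c$ with $b=c^*c$ into $\mathfrak M_{x,\nu}$ and the normalization with cutoff functions; note that at that stage $\varphi$ is a priori only a linear functional, and its boundedness comes from the standard fact that positive functionals on a C$^*$-algebra are automatically bounded, so your order of steps (positivity first) is the right one. Both routes work; the paper's is more economical, needing neither Corollary~\ref{cor:exact} nor Corollary~\ref{cor:surj}, while yours makes the factorization through the quotient $C^*_\nu(\G)/\ker\vartheta_{x,\nu}$ explicit, at the price of the extra lifting bookkeeping.
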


\begin{proof}
If $\psi=\varphi \circ \vartheta_{x,\nu}$, then clearly $\psi|_{C_0(\Gu)}=\ev_x$. Conversely, assume $\psi|_{C_0(\Gu)}=\ev_x$. Take $f\in C_c(\G)$. As $C_0(\Gu)$ lies in the multiplicative domain of $\psi$, for every $q\in C_0(\Gu)$ with $q(x)=1$ we get
$\psi(q*f*q)=\psi(f)$, hence $|\psi(f)|\le\|q*f*q\|_\nu$. By~\eqref{eq:e-norm-limit} we conclude that
$$
|\psi(f)|\le\|\eta_x(f)\|_\nu.
$$
It follows that $\psi=\varphi \circ \vartheta_{x,\nu}$ for a norm one linear functional $\varphi$ on $C^*_\nu(\Gxx)$. As $\varphi(1)=1$, this functional must be positive.
\end{proof}

There are a number of results in the literature related to this factorization, see, for example, \citelist{\cite{MR0656488}*{Proposition~6.1}\cite{AS}*{Lemma~2}\cite{BKKO}*{Lemma~7.9}\cite{BNRSW}*{Theorem~3.1(a)}}. It is sometimes convenient to view it as a unique state extension property.

\begin{cor}
Assume $\G$ is a locally compact \'{e}tale groupoid, $C^*_\nu(\G)$ is a groupoid C$^*$-algebra  and $\mathcal H\subset\G$ is an open subgroupoid. Let $C^*_\nu(\mathcal H)$ be the closure of $C_c(\mathcal H)$ in $C^*_\nu(\G)$. Assume $\psi$ is a state on $C^*_\nu(\mathcal H)$ satisfying $\psi|_{C_0(\mathcal H^{(0)})}=\ev_x$ for a point $x\in \mathcal H^{(0)}$ such that $\mathcal H^x_x=\Gxx$. Then $\psi$ extends uniquely to a state on $C^*_\nu(\G)$.
\end{cor}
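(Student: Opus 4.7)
The plan is to apply Proposition~\ref{prop:pointeval} twice, first to $\HH$ and then (for uniqueness) to $\G$, using the observation at the end of Section~\ref{ssec:completions} that under the hypothesis $\HH^x_x=\Gxx$ the two norms on $\C\Gxx$ coming from $C^*_\nu(\HH)$ and from $C^*_\nu(\G)$ coincide, so that $C^*_\nu(\HH^x_x)$ and $C^*_\nu(\Gxx)$ are literally the same C$^*$-algebra.

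For \emph{existence}, since $\psi|_{C_0(\HH^{(0)})}=\ev_x$, Proposition~\ref{prop:pointeval} applied to the groupoid $\HH$ gives $\psi=\varphi\circ\vartheta^\HH_{x,\nu}$ for some state $\varphi$ on $C^*_\nu(\Gxx)$, where $\vartheta^\HH_{x,\nu}\colon C^*_\nu(\HH)\to C^*_\nu(\Gxx)$ is the analogue of Lemma~\ref{lem:contraction} for $\HH$. I would then set $\tilde\psi:=\varphi\circ\vartheta_{x,\nu}$ on $C^*_\nu(\G)$. This is a positive contraction, and testing it on a bump $q\in C_c(\Gu)$ with $q(x)=1$, $0\le q\le 1$, gives $\tilde\psi(q)=\varphi(1)=1$, so it is a state. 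Since $\vartheta_{x,\nu}$ and $\vartheta^\HH_{x,\nu}$ both restrict functions in $C_c(\HH)$ to $\Gxx$, they agree on $C_c(\HH)$ and hence, by density, on $C^*_\nu(\HH)$; consequently $\tilde\psi|_{C^*_\nu(\HH)}=\psi$.

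For \emph{uniqueness}, let $\tilde\psi'$ be an arbitrary state extension of $\psi$ to $C^*_\nu(\G)$. The key step, which I expect to be the main obstacle, is to upgrade $\tilde\psi'|_{C_0(\HH^{(0)})}=\ev_x$ to $\tilde\psi'|_{C_0(\Gu)}=\ev_x$; only then can Proposition~\ref{prop:pointeval} be applied. I would represent $\tilde\psi'|_{C_0(\Gu)}$ by a Radon measure $\mu$ on $\Gu$ of total mass $\le 1$; the hypothesis forces $\int f\,d\mu=f(x)$ for all $f\in C_c(\HH^{(0)})$, so by Riesz representation $\mu$ and $\delta_x$ agree as measures on the open subset $\HH^{(0)}$. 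Since $x\in \HH^{(0)}$, this gives $\mu(\HH^{(0)})=1$, which together with $\mu(\Gu)\le 1$ forces $\mu=\delta_x$.

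With $\tilde\psi'|_{C_0(\Gu)}=\ev_x$ in hand, Proposition~\ref{prop:pointeval} yields $\tilde\psi'=\varphi'\circ\vartheta_{x,\nu}$ for a state $\varphi'$ on $C^*_\nu(\Gxx)$. Restricting the identity $\tilde\psi'=\psi$ to $C^*_\nu(\HH)$ and using $\vartheta_{x,\nu}|_{C^*_\nu(\HH)}=\vartheta^\HH_{x,\nu}$ gives $\varphi'\circ\vartheta^\HH_{x,\nu}=\varphi\circ\vartheta^\HH_{x,\nu}$; since $\vartheta^\HH_{x,\nu}$ is surjective by Corollary~\ref{cor:surj}, $\varphi'=\varphi$, and therefore $\tilde\psi'=\tilde\psi$.
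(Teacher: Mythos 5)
Your proof is correct and follows essentially the same route as the paper: uniqueness comes from the fact that $\ev_x$ extends uniquely from $C_0(\HH^{(0)})$ to $C_0(\Gu)$ (your Radon-measure argument just spells this out), then Proposition~\ref{prop:pointeval} forces any extension to be $\varphi\circ\vartheta_{x,\nu}$, and $\varphi$ is pinned down by $\psi$ since $\eta_x(C_c(\HH))=\C\Gxx$ is dense (equivalently, your surjectivity of $\vartheta^\HH_{x,\nu}$). Your explicit construction of the extension via $\varphi\circ\vartheta_{x,\nu}$ and the norm-coincidence $C^*_\nu(\HH^x_x)=C^*_\nu(\Gxx)$ is a harmless addition; the paper leaves existence to the standard state-extension theorem.
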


\bp
Since the point-evaluation at $x$ on $C_0(\mathcal H^{(0)})$ extends uniquely to a state on $C_0(\Gu)$, by Proposition~\ref{prop:pointeval} any state extension of $\psi$ must be of the form $\varphi\circ\vartheta_{x,\nu}$ for a state $\varphi$ on~$C^*_\nu(\Gxx)$. As $\eta_x(C_c(\mathcal H))=\C\Gxx$ is dense in $C^*_\nu(\Gxx)$, the state $\varphi$ is uniquely determined by $\psi$.
\ep

\bigskip

\section{Isotropy fibers of ideals}

Assume $\G$ is a locally compact \'{e}tale groupoid and $C^*_\nu(\G)$ is a groupoid C$^*$-algebra. Assume $I\subset C^*_\nu(\G)$ is an ideal, by which we will always mean a closed two-sided ideal. Define
$$
I_x:=\vartheta_{x,\nu}(I)\subset C^*_\nu(\Gxx),\quad x\in\Gu.
$$

\begin{lemma} 
For every $x\in\Gu$, $I_x$ is an ideal in $C_\nu^{*}(\Gxx)$.
\end{lemma}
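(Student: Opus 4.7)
The plan is to establish two properties of $I_x=\vartheta_{x,\nu}(I)$: that it is algebraically a two-sided $*$-ideal in $C^*_\nu(\Gxx)$, and that it is norm-closed. Both steps hinge on Corollary~\ref{cor:surj}, which provides that $\vartheta_{x,\nu}$ restricted to the multiplicative domain $\mathfrak M_{x,\nu}$ is a surjective $*$-homomorphism onto $C^*_\nu(\Gxx)$.

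For the algebraic ideal structure, note first that $\vartheta_{x,\nu}$ is linear and $*$-preserving (every CCP map is), so $I_x$ is automatically a $*$-subspace. To see it is a two-sided ideal, fix $b\in C^*_\nu(\Gxx)$ and apply Corollary~\ref{cor:surj} to pick $m\in\mathfrak M_{x,\nu}$ with $\vartheta_{x,\nu}(m)=b$. For any $a\in I$, the defining property of the multiplicative domain gives
$$
b\,\vartheta_{x,\nu}(a)=\vartheta_{x,\nu}(m)\vartheta_{x,\nu}(a)=\vartheta_{x,\nu}(ma),
$$
and $ma\in I$ by the ideal property of $I$, so $b\,\vartheta_{x,\nu}(a)\in I_x$; the right-multiplication statement is symmetric.

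For norm closedness, the key observation is that the same surjective $*$-homomorphism $\vartheta_{x,\nu}|_{\mathfrak M_{x,\nu}}$ carries the closed ideal $\mathfrak M_{x,\nu}\cap I$ of $\mathfrak M_{x,\nu}$ to a closed two-sided ideal $\vartheta_{x,\nu}(\mathfrak M_{x,\nu}\cap I)$ in $C^*_\nu(\Gxx)$. It therefore suffices to show $I_x=\vartheta_{x,\nu}(\mathfrak M_{x,\nu}\cap I)$. The inclusion $\supset$ is immediate. For the reverse, given $a\in I$, the idea is to exhibit an approximate unit $(u_\alpha)$ for $I$ contained in $\mathfrak M_{x,\nu}\cap I$; then $u_\alpha\in\mathfrak M_{x,\nu}$ yields $\vartheta_{x,\nu}(u_\alpha a)=\vartheta_{x,\nu}(u_\alpha)\vartheta_{x,\nu}(a)$, and $u_\alpha a\to a$ gives
$$
\vartheta_{x,\nu}(a)=\lim_\alpha\vartheta_{x,\nu}(u_\alpha)\vartheta_{x,\nu}(a)\in\overline{\vartheta_{x,\nu}(\mathfrak M_{x,\nu}\cap I)}=\vartheta_{x,\nu}(\mathfrak M_{x,\nu}\cap I).
$$

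The main obstacle is the construction of such an approximate unit inside $\mathfrak M_{x,\nu}\cap I$. The available building blocks are the inclusion $C_0(\Gu)\subset\mathfrak M_{x,\nu}$ and Lemma~\ref{lem:multdomain}, which provides many additional elements of $\mathfrak M_{x,\nu}$ as functions supported on finite unions of open bisections meeting $\Gxx$. The natural strategy is to start from a standard approximate unit of $I$ and pre- and post-multiply by cut-off functions $q_V\in C_0(\Gu)$ with $q_V(x)=1$; the identity $\vartheta_{x,\nu}(q_V*a*q_V)=\vartheta_{x,\nu}(a)$, obtained from Theorem~\ref{thmnorm} and the multiplicative-domain property of $q_V$, indicates that this truncation does not alter the isotropy fiber, and one then needs to verify that enough such truncations remain in the multiplicative domain to form the desired approximate unit.
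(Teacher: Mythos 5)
The algebraic half of your argument (using Corollary~\ref{cor:surj} and the bimodule property of the multiplicative domain to get $ba,\,ab\in I_x$) is exactly the paper's argument and is fine. The problem is the closedness half. There you reduce everything to the claim that $I$ admits an approximate unit contained in $\mathfrak M_{x,\nu}\cap I$ (equivalently, that $I_x=\vartheta_{x,\nu}(\mathfrak M_{x,\nu}\cap I)$), and this claim is never proved; moreover the strategy you sketch does not produce it. The cut-offs $q_V$ do lie in $\mathfrak M_{x,\nu}$, but $\mathfrak M_{x,\nu}$ is a C$^*$-subalgebra, not an ideal, so for an arbitrary element $e_\lambda$ of an approximate unit of $I$ there is no reason whatsoever that $q_V*e_\lambda*q_V$ lies in $\mathfrak M_{x,\nu}$: multiplying a general element of $C^*_\nu(\G)$ by multiplicative-domain elements does not put it into the multiplicative domain. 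Lemma~\ref{lem:multdomain} only supplies elements supported on finitely many bisections through points of $\Gxx$, and a general ideal (for instance one with $C_0(\Gu)\cap I=0$) need not contain any useful elements of this kind, so it is not even clear that $\mathfrak M_{x,\nu}\cap I$ is large enough for your equality to hold. As written, this is a genuine gap.

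The paper avoids this issue entirely: closedness is obtained from Theorem~\ref{thmnorm}. By \eqref{eq:e-norm-limit} (extended by continuity from $C_c(\G)$ to $C^*_\nu(\G)$) one has, for $a\in I$,
$$
\|\vartheta_{x,\nu}(a)\|_\nu=\lim_{V\in\V}\|q_V a q_V\|_\nu ,
$$
and since $q_Vaq_V\in I$ with $\vartheta_{x,\nu}(q_Vaq_V)=\vartheta_{x,\nu}(a)$, this shows that the norm on $\vartheta_{x,\nu}(I)$ coincides with the quotient norm on $I/(I\cap\ker\vartheta_{x,\nu})$; hence $\vartheta_{x,\nu}(I)$ is complete and therefore closed in $C^*_\nu(\Gxx)$. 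Note that here the truncations $q_Vaq_V$ only need to stay in $I$, fix the fiber and control the norm — they are never required to lie in the multiplicative domain. If you replace your approximate-unit step by this norm comparison, the rest of your write-up goes through.
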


\begin{proof}
We claim that the norm on $\vartheta_{x,\nu}(I)$ coincides with the quotient norm on $I/(I\cap\ker\vartheta_{x,\nu})$ under the identification of these spaces by $\vartheta_{x,\nu}$. As $\vartheta_{x,\nu}$ is a contraction, it is clear that the quotient norm dominates $\|\cdot\|_\nu$. On the other hand, if $a\in I$ and $(q_V)_V$ is as in Theorem~\ref{thmnorm}, then it follows from~\eqref{eq:e-norm-limit} that
$$
\|\vartheta_{x,\nu}(a)\|_\nu=\lim_V\|q_Vaq_V\|_\nu.
$$
As $q_Vaq_V\in I$ and $\vartheta_{x,\nu}(q_Vaq_V)=\vartheta_{x,\nu}(a)$, this shows that the norm $\|\cdot\|_\nu$ is at least as large as the quotient norm on $I/(I\cap\ker\vartheta_{x,\nu})$. Hence the two norms coincide, proving the claim. It follows that the normed space $\vartheta_{x,\nu}(I)$ is complete, hence it is closed in $C^*_\nu(\Gxx)$.

Next, let $a= \vartheta_{x , \nu}(a')$ with $a'\in I$ and take $b\in C_{\nu}^{*}(\Gxx)$. By Corollary~\ref{cor:surj} there exists an element $b'$ in the multiplicative domain of $\vartheta_{x, \nu}$ such that $\vartheta_{x, \nu}(b')=b$. We then have that $a'b', b'a'\in I$ and $ab = \vartheta_{x,\nu}(a' b')$, $ba = \vartheta_{x , \nu}(b'a')$, proving that $I_x$ is an ideal.
\end{proof}

\begin{defn}
We call the ideal $I_x\subset C^*_\nu(\Gxx)$ the \emph{isotropy fiber} of $I$ at $x$.
\end{defn}

The family of ideals $I_x$, $x\in\Gu$, is invariant under the action of $\G$ by conjugation in the following sense. Recall that for any $\gamma \in \G$ we have an isomorphism $\Ad\gamma\colon\C\G_{s(\gamma)}^{s(\gamma)}\to\C \G_{r(\gamma)}^{r(\gamma)}$.

\begin{lemma}\label{lem:invariance}
For every $\gamma\in\G$, the isomorphism $\Ad\gamma\colon\C\G_{s(\gamma)}^{s(\gamma)}\to\C \G_{r(\gamma)}^{r(\gamma)}$ extends to an isomorphism $\Psi_{\gamma,\nu} \colon C^*_\nu(\G_{s(\gamma)}^{s(\gamma)})\to C^*_\nu(\G_{r(\gamma)}^{r(\gamma)})$, and we have $\Psi_{\gamma,\nu}(I_{s(\gamma)})=I_{r(\gamma)}$.
\end{lemma}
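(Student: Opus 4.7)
Set $x := s(\gamma)$ and $y := r(\gamma)$. The strategy is to implement $\Ad\gamma$ at the $C^*$-algebra level by two-sided conjugation by a function supported on a bisection: pick an open bisection $W\subset\G$ containing $\gamma$, and a function $u\in C_c(W)$ with $u(\gamma)=1$ and $0\le u\le 1$ (possible since open bisections are Hausdorff).

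The key observation is a direct computation on $C_c(\G)$. Because $W$ is a bisection, $\gamma$ is the unique element of $W$ whose range is $y$ (and also the unique one with source $x$), so the convolution formula collapses to
\begin{equation*}
(u*f*u^*)(g') = |u(\gamma)|^2\, f(\gamma^{-1}g'\gamma) = f(\gamma^{-1}g'\gamma),\qquad g'\in\G^y_y,
\end{equation*}
that is, $\eta_y(u*f*u^*) = \Ad(\gamma)(\eta_x(f))$ for every $f\in C_c(\G)$. Combining this with~\eqref{eq:norm-bisection}, which gives $\|u\|_\nu = \|u^*\|_\nu = \|u\|_\infty = 1$, we obtain, for any $a\in\C\G^x_x$ and any $f\in C_c(\G)$ with $\eta_x(f)=a$,
\begin{equation*}
\|\Ad(\gamma)(a)\|_\nu \;\le\; \|u*f*u^*\|_\nu \;\le\; \|u\|_\nu\|f\|_\nu\|u^*\|_\nu \;=\; \|f\|_\nu.
\end{equation*}
Taking the infimum over such $f$ and invoking the characterization~\eqref{eq:norm-inf} of Theorem~\ref{thmnorm} yields $\|\Ad(\gamma)(a)\|_\nu \le \|a\|_\nu$. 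Running the same argument with $\gamma^{-1}$, $W^{-1}$ and $u^*$ in place of $\gamma$, $W$, $u$ gives the reverse inequality, so $\Ad\gamma$ is isometric and extends to a $*$-isomorphism $\Psi_{\gamma,\nu}\colon C^*_\nu(\G^x_x)\to C^*_\nu(\G^y_y)$.

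For the statement about ideals, every map in the identity $\eta_y(u*f*u^*) = \Ad(\gamma)(\eta_x(f))$ is norm-continuous in $f$ (multiplication by $u$, the contractions $\vartheta_{x,\nu}$ and $\vartheta_{y,\nu}$, and $\Psi_{\gamma,\nu}$), so by density of $C_c(\G)$ in $C^*_\nu(\G)$ we obtain
\begin{equation*}
\vartheta_{y,\nu}(u*a*u^*) \;=\; \Psi_{\gamma,\nu}(\vartheta_{x,\nu}(a))\qquad\text{for all } a\in C^*_\nu(\G).
\end{equation*}
If $a\in I$, then $u*a*u^*\in I$ because $I$ is a two-sided ideal, hence $\Psi_{\gamma,\nu}(\vartheta_{x,\nu}(a)) = \vartheta_{y,\nu}(u*a*u^*)\in\vartheta_{y,\nu}(I)=I_y$, giving $\Psi_{\gamma,\nu}(I_x)\subset I_y$; applying the same argument to $\gamma^{-1}$ gives the reverse inclusion.

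The only delicate point is the bisection convolution computation. Once one checks that conjugation by~$u$ implements $\Ad\gamma$ on the restriction to the isotropy group, Theorem~\ref{thmnorm} together with~\eqref{eq:norm-bisection} immediately delivers the isometry property, and the passage to ideals is a routine continuity-and-density argument.
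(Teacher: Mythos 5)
Your proof is correct, and its second half (the statement $\Psi_{\gamma,\nu}(I_{s(\gamma)})=I_{r(\gamma)}$) is essentially the paper's own argument: conjugate by a function supported on a bisection through $\gamma$, use the identity $\eta_{r(\gamma)}(u*f*u^*)=(\Ad\gamma)(\eta_{s(\gamma)}(f))$ to get $\vartheta_{r(\gamma),\nu}(u\,\cdot\,u^*)=\Psi_{\gamma,\nu}\circ\vartheta_{s(\gamma),\nu}$, and then invoke the ideal property plus symmetry in $\gamma\leftrightarrow\gamma^{-1}$. Where you genuinely diverge is the first half, the fact that $\Ad\gamma$ is isometric for $\|\cdot\|_\nu$: the paper gets this directly from the \emph{definition} of the norm on $\C\Gxx$, observing that right translation by $\gamma$ gives a unitary equivalence $\Ind\pi\sim\Ind(\pi\circ\Ad\gamma)$, so the families of representations defining the two norms correspond under $\Ad\gamma$; you instead combine the bisection conjugation identity with the infimum formula~\eqref{eq:norm-inf} of Theorem~\ref{thmnorm} and the sup-norm identity~\eqref{eq:norm-bisection}, getting $\|\Ad\gamma(a)\|_\nu\le\|u*f*u^*\|_\nu\le\|f\|_\nu$ and taking the infimum over lifts $f$ of $a$. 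Your route is perfectly valid (Theorem~\ref{thmnorm} is established before this lemma, so there is no circularity), and it has the aesthetic advantage of running both halves of the proof on the single conjugation computation; the cost is that it leans on the nontrivial Theorem~\ref{thmnorm}, whereas the paper's argument for the isometry is shorter, needs nothing beyond the definition of $\|\cdot\|_\nu$ via induced representations, and makes the naturality of the construction under translation transparent. One small bookkeeping point in your write-up: the reverse inequality should be applied to $b=\Ad\gamma(a)$ with $\gamma^{-1}$, $W^{-1}$, $u^*$, which indeed gives $\|a\|_\nu\le\|\Ad\gamma(a)\|_\nu$, and your identity $(u*f*u^*)(g')=|u(\gamma)|^2 f(\gamma^{-1}g'\gamma)$ is exactly the computation the paper records later in Lemma~\ref{lem:induced-ideal2}.
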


\bp If $\pi$ is a unitary representation of $\G_{r(\gamma)}^{r(\gamma)}$ then $\pi\circ\Ad\gamma$ is a unitary representation of~$\G_{s(\gamma)}^{s(\gamma)}$ and the right translation by $\gamma$ defines an equivalence between $\Ind\pi$ and $\Ind(\pi\circ\Ad\gamma)$. It therefore follows from the definition of the norms on the isotropy group algebras that $\Ad\gamma$ extends to an isomorphism $\Psi_{\gamma,\nu} \colon C^*_\nu(\G_{s(\gamma)}^{s(\gamma)})\to C^*_\nu(\G_{r(\gamma)}^{r(\gamma)})$.

For the second statement, choose an open bisection $W$ containing $\gamma$. Let $f\in C_{c}(W)$ be such that $f(\gamma)=1$. Then a straightforward computation shows that
$$
\eta_{r(\gamma)}(f*q*f^{*}) = (\Ad\gamma)(\eta_{s(\gamma)}(q))\quad \text{for all}\quad q\in \CcG.
$$
Hence
\begin{equation*} 
\vartheta_{r(\gamma),\nu}(f\cdot f^{*}) =\Psi_{\gamma,\nu}\circ \vartheta_{s(\gamma),\nu}.
\end{equation*}
It follows that $\Psi_{\gamma,\nu}(I_{s(\gamma)})\subset I_{r(\gamma)}$. As $\Psi_{\gamma,\nu}^{-1}=\Psi_{\gamma^{-1},\nu}$, we get the equality.
\ep

Let us say that a family of ideals $J_x\subset C^*_\nu(\Gxx)$, $x\in\Gu$, is \emph{invariant} if it has the property from the lemma above: $\Psi_{\gamma,\nu}(J_{s(\gamma)})=J_{r(\gamma)}$ for all $\gamma$.

\smallskip

Therefore we have shown that for every ideal $I\subset C^*_\nu(\G)$ its isotropy fibers form an invariant family of ideals. It is natural to ask the following:

\begin{question} \label{ques1}
When are the ideals of $C^*_\nu(\G)$ determined by their isotropy fibers? In other words, when can we say that given ideals $I_1$ and $I_2$ such that $I_{1,x}=I_{2,x}$ for all $x\in\Gu$, we must have $I_1=I_2$?
\end{question}

The question can be formulated in several ways. Assume $\mathcal J=(J_x)_{x\in\Gu}$ is an invariant family of ideals $J_x\subset C^*_\nu(\Gxx)$. Define
\begin{equation}\label{eq:I(J)}
I(\mathcal J):=\{ a\in C_{\nu}^{*}(\G) : \vartheta_{x , \nu }(a^{*}a)\in J_{x} \text{ for all } x\in \Gu \}.
\end{equation}
As we will see shortly, $I(\mathcal J)$ is an ideal in $C^*_\nu(\G)$. Assuming that this is true, let us first establish the following result.

\begin{lemma}\label{lem:equiv}
For any locally compact \'{e}tale groupoid $\G$ and any groupoid C$^*$-algebra $C^*_\nu(\G)$, the following properties are equivalent:
\begin{enumerate}
  \item the ideals of $C^*_\nu(\G)$ are determined by their isotropy fibers;
  \item for every ideal $J\subset C^*_\nu(\G)$ with isotropy fibers $J_x$, we have $J=I(\{J_x\}_{x\in\Gu})$;
  \item every ideal of $C^*_\nu(\G)$ is of the form $I(\mathcal J)$ for some invariant family of ideals $\mathcal J$.
\end{enumerate}
\end{lemma}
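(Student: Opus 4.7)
The plan is to establish $(1)\Leftrightarrow(2)\Leftrightarrow(3)$. Two of the implications are immediate: $(2)\Rightarrow(3)$ by taking $\mathcal J=\{J_x\}_{x\in\Gu}$, and $(2)\Rightarrow(1)$ because if two ideals $I_1,I_2$ share the same isotropy fibers $\{K_x\}$, then by (2) both coincide with $I(\{K_x\})$.

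The technical heart of the remaining implications $(3)\Rightarrow(2)$ and $(1)\Rightarrow(2)$ is the following single observation, which I would prove first. For any invariant family $\mathcal J=\{J_x\}_{x\in\Gu}$ of ideals,
$$
\vartheta_{x,\nu}(I(\mathcal J))\subset J_x\qquad\text{for every } x\in\Gu.
$$
This follows from the Kadison--Schwarz inequality applied to the completely positive contraction $\vartheta_{x,\nu}$ of Lemma~\ref{lem:contraction}: for $a\in I(\mathcal J)$ we have
$$
\vartheta_{x,\nu}(a)^*\vartheta_{x,\nu}(a)\le \vartheta_{x,\nu}(a^*a)\in J_x,
$$
and since the closed two-sided ideal $J_x$ is a hereditary C$^*$-subalgebra of $C^*_\nu(\Gxx)$, the positive element $\vartheta_{x,\nu}(a)^*\vartheta_{x,\nu}(a)$ lies in $J_x$, forcing $\vartheta_{x,\nu}(a)\in J_x$.

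Given this observation, $(3)\Rightarrow(2)$ reads as follows: if $J=I(\mathcal J)$ has isotropy fibers $J_x'=\vartheta_{x,\nu}(J)$, then the observation gives $J_x'\subset J_x$, hence $I(\{J_x'\})\subset I(\{J_x\})=J$; the reverse inclusion $J\subset I(\{J_x'\})$ is automatic from the definition of the fibers (for $a\in J$ we have $a^*a\in J$, so $\vartheta_{x,\nu}(a^*a)\in J_x'$). For $(1)\Rightarrow(2)$, the same observation applied with $\mathcal J=\{I_x\}$ shows that the isotropy fibers of $I(\{I_x\})$ are contained in $I_x$, while the trivial inclusion $I\subset I(\{I_x\})$ gives the reverse containment of fibers. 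Thus $I$ and $I(\{I_x\})$ have identical isotropy fibers, so by~(1) they coincide.

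Since everything else amounts to unwinding definitions, the only real obstacle is the Schwarz-plus-hereditariness step above; this is also the step where it becomes visible why the squared quantity $a^*a$ rather than $a$ itself appears in the definition~\eqref{eq:I(J)}.
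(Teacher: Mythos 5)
Your proof is correct, and its overall skeleton (the chain of implications and the monotonicity/definition-unwinding arguments) coincides with the paper's; the difference lies in how the key containment $\vartheta_{x,\nu}(I(\mathcal J))\subset J_x$ is obtained. The paper exploits the standing assumption, stated just before the lemma, that $I(\mathcal J)$ is an ideal: for $a\in I(\mathcal J)_+$ one has $\sqrt a\in I(\mathcal J)$, hence $\vartheta_{x,\nu}(a)=\vartheta_{x,\nu}\big((\sqrt a)^{*}\sqrt a\big)\in J_x$ by the defining condition~\eqref{eq:I(J)}, and the general case follows since an ideal is spanned by its positive elements. You instead use the Schwarz inequality $\vartheta_{x,\nu}(a)^{*}\vartheta_{x,\nu}(a)\le\vartheta_{x,\nu}(a^{*}a)$ for the completely positive contraction $\vartheta_{x,\nu}$ (valid here, e.g.\ directly from the coisometry picture of Corollary~\ref{cor:contraction}, since $v^{*}v\le1$), together with hereditariness of the closed ideal $J_x$ and the standard fact that $b^{*}b\in J_x$ forces $b\in J_x$. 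Your route has the minor advantage of treating every element of the set $I(\mathcal J)$ at once, without invoking its ideal structure at that particular step, whereas the paper's square-root trick is more elementary but leans on that assumption. Note that both arguments still need $I(\mathcal J)$ to be an ideal elsewhere — for instance in your $(1)\Rightarrow(2)$ you apply property~(1) to $I(\{I_x\})$, which presupposes it is an ideal with well-defined isotropy fibers — but this is exactly the convention the paper adopts before stating the lemma (with the ideal property proved afterwards via Lemma~\ref{lem:induced-ideal2}), so your argument is complete in context.
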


\bp
Let $\mathcal J$ be any invariant family of ideals. Since we assume that $I(\mathcal J)$ is an ideal, every $a\in I(\mathcal J)_{+}$ satisfies $\sqrt{a} \in I(\mathcal J)$, and hence $\vartheta_{x, \nu}(a)\in J_{x}$, proving that $I(\mathcal J)_x\subset J_x$ for all $x$. It follows that if $J$ is an ideal in $C^*_\nu(\G)$, then
$$
J\subset I(\{J_x\}_{x\in\Gu})\quad\text{and}\quad J_y=I(\{J_x\}_{x\in\Gu})_y\quad\text{for all}\quad y\in\Gu.
$$
This shows that $(1)$ and $(2)$ are equivalent. Also, obviously $(2)$ implies $(3)$. Finally, if $J=I(\mathcal T)$ for an invariant family of ideals $\mathcal T$, then $J_x\subset T_x$, hence $J\subset I(\{J_x\}_{x})\subset I(\mathcal T)$ and therefore $J=I(\{J_x\}_{x})$. Thus $(3)$ implies $(2)$.
\ep


Let us show now that $I(\mathcal J)$ is indeed an ideal. We will simultaneously establish a connection between the construction of $I(\mathcal J)$ and induced ideals.

Take a point $x\in\Gu$. Given an ideal $J\subset C^*_\nu(\Gxx)$, consider a representation $\pi\colon C^*_\nu(\Gxx)\to B(H)$ with kernel $J$. Define an induced ideal in $C^*_\nu(\G)$ by
$$
\Ind J:=\ker(\Ind\pi\colon C^*_\nu(\G)\to B(\Ind H)).
$$
It is well-known that this definition is independent of the choice of $\pi$. In fact, the construction can equivalently be described as follows.

\begin{lemma}\label{lem:induced-ideal}
Let $\mathcal{B} \subset C^*_\nu(\G)$ be a subset spanning a dense subspace of $C^*_\nu(\G)$. For any ideal $J\subset C^*_\nu(\Gxx)$, we have
$$
\Ind J=\{a\in C^*_\nu(\G): \vartheta_{x,\nu}(b^*a^*ab)\in J\ \text{for all}\ b\in \mathcal{B}\}.
$$
\end{lemma}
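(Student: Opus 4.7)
The plan is to pick a representation $\pi\colon C^*_\nu(\Gxx)\to B(H)$ with $\ker\pi=J$, so that by definition $\Ind J=\ker(\Ind\pi)$, and to transfer everything to the Hilbert space level via Corollary~\ref{cor:contraction}, which gives the intertwining identity $\pi(\vartheta_{x,\nu}(c))=v(\Ind\pi)(c)v^*$ with $v\colon\Ind H\to H$ the coisometry $\xi\mapsto\xi(x)$.

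The forward inclusion $\Ind J\subset\{a:\vartheta_{x,\nu}(b^*a^*ab)\in J\text{ for all }b\in\mathcal{B}\}$ is then immediate: if $a\in\Ind J$, then $b^*a^*ab\in\Ind J$ for any $b\in C^*_\nu(\G)$, hence $\pi(\vartheta_{x,\nu}(b^*a^*ab))=v(\Ind\pi)(b^*a^*ab)v^*=0$, so $\vartheta_{x,\nu}(b^*a^*ab)\in\ker\pi=J$. For the reverse inclusion, assume $\vartheta_{x,\nu}(b^*a^*ab)\in J$ for every $b\in\mathcal{B}$ and set $T_b:=(\Ind\pi)(ab)$; the hypothesis becomes $(T_bv^*)^*(T_bv^*)=vT_b^*T_bv^*=0$, so $T_bv^*=(\Ind\pi)(a)(\Ind\pi)(b)v^*=0$. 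By linearity, norm continuity, and the density of $\mathrm{span}\,\mathcal{B}$ in $C^*_\nu(\G)$, this extends to $(\Ind\pi)(a)(\Ind\pi)(c)v^*=0$ for every $c\in C^*_\nu(\G)$.

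The decisive step is then to show that $v^*H$ is cyclic for $(\Ind\pi)(C^*_\nu(\G))$ acting on $\Ind H$, since this will force $(\Ind\pi)(a)=0$ and yield $a\in\Ind J$. By the lemma constructing the unitary $U\colon H_0\to\Ind H$, vectors of the form $\xi_{f,\eta}$ with $f\in C_c(\G_x)$ and $\eta\in H$ span a dense subspace of $\Ind H$. Since the source map is a local homeomorphism, $\G_x$ is discrete, and each such $f$ is a finite combination of point masses $\delta_g$, $g\in\G_x$. For each such $g$, I would choose an open bisection $W\ni g$ and $F\in C_c(W)$ with $F(g)=1$; then, using that $v^*\eta$ vanishes outside $\Gxx$ and satisfies $(v^*\eta)(k)=\pi(k)^*\eta$ for $k\in\Gxx$, and that $W$ meets $\G_x$ in the single point $g$, a direct computation from~\eqref{eq:rhox2} shows that $(\Ind\pi)(F)v^*\eta$ vanishes outside $g\Gxx$ and agrees with $\pi(k)^*\eta=\xi_{\delta_g,\eta}(gk)$ at $g'=gk$, $k\in\Gxx$.

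The principal obstacle is this cyclicity argument: the bookkeeping with the supports of $F$ and $v^*\eta$ and the constraints $r(g')=r(g)$, $s(h')=x$ must be carried out carefully, and it is precisely here that the étale structure (discrete source fibers and bisections) plays an essential role. Once this is in hand, the rest of the proof is a direct unpacking of the intertwining identity from Corollary~\ref{cor:contraction}.
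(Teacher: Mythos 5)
Your proposal is correct and follows essentially the same route as the paper: choose $\pi$ with kernel $J$, apply Corollary~\ref{cor:contraction} to rewrite $\pi(\vartheta_{x,\nu}(b^*a^*ab))$ as $vT_b^*T_bv^*$, and reduce everything to the density of $(\Ind\pi)(C^*_\nu(\G))v^*H$ in $\Ind H$. The cyclicity step you single out as the main obstacle is precisely what the paper dispatches in one line (``from the definitions or by \cite{CN}*{Lemma~1.3}''), and your explicit bisection computation showing $(\Ind\pi)(F)v^*\eta=\xi_{\delta_g,\eta}$ is a correct way to carry it out.
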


\bp
Consider a representation $\pi\colon C^*_\nu(\Gxx)\to B(H)$ with kernel $J$ and let $v\colon\Ind H\to H$ be the coisometry given by $v\xi=\xi(x)$. By Corollary~\ref{cor:contraction}, for all $a,b\in C^*_\nu(\G)$ and $\xi\in H$, we then have
$$
\|(\Ind\pi)(ab)v^*\xi\|^2=(\pi(\vartheta_{x,\nu}(b^*a^*ab))\xi,\xi).
$$
Observe now that $(\Ind\pi)(\mathcal{B})v^* H$ spans a dense set in $\Ind H$. One can see this either from the definitions or by applying~\cite{CN}*{Lemma~1.3}, which shows that if $\pi$ is cyclic with a cyclic vector~$\xi$, then $v^*\xi$ is a cyclic vector for $\Ind\pi$. We conclude that
$$
a\in\ker(\Ind\pi\colon C^*_\nu(\G)\to B(\Ind H))
$$
if and only if $\pi(\vartheta_{x,\nu}(b^*a^*ab))=0$ for all $b \in \mathcal{B}$.
\ep

\begin{cor}\label{cor:ind-primitive}
For any ideals $I\subset C^*_\nu(\G)$ and $J\subset C^*_\nu(\Gxx)$, we have $I\subset\Ind J$ if and only if $\vartheta_{x,\nu}(I)\subset J$.
\end{cor}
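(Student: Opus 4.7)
The plan is to deduce both directions from Lemma~\ref{lem:induced-ideal} together with standard C$^{*}$-algebraic facts about ideals and completely positive maps.

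For the easy direction ($\vartheta_{x,\nu}(I)\subset J\Rightarrow I\subset\Ind J$), the argument is essentially tautological: since $I$ is a two-sided ideal, for every $a\in I$ and every $b\in C^{*}_{\nu}(\G)$ we have $b^{*}a^{*}ab\in I$, hence $\vartheta_{x,\nu}(b^{*}a^{*}ab)\in \vartheta_{x,\nu}(I)\subset J$. Taking $\mathcal{B}=C^{*}_{\nu}(\G)$ in Lemma~\ref{lem:induced-ideal} gives $a\in\Ind J$.

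For the converse direction I would first reduce to showing $\vartheta_{x,\nu}(a^{*}a)\in J$ for every $a\in I$, and then upgrade to $\vartheta_{x,\nu}(a)\in J$. For the first reduction, pick an approximate unit $(u_{\lambda})$ for $C^{*}_{\nu}(\G)$; Lemma~\ref{lem:induced-ideal} applied to the ideal $\Ind J\supset I$ yields $\vartheta_{x,\nu}(u_{\lambda}^{*}a^{*}au_{\lambda})\in J$ for every $\lambda$, and since $u_{\lambda}^{*}a^{*}au_{\lambda}\to a^{*}a$ in norm, continuity of $\vartheta_{x,\nu}$ (Lemma~\ref{lem:contraction}) and closedness of $J$ force $\vartheta_{x,\nu}(a^{*}a)\in J$.

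The only subtle step is the upgrade $\vartheta_{x,\nu}(a^{*}a)\in J\Rightarrow\vartheta_{x,\nu}(a)\in J$, which is the real content of the argument because $\vartheta_{x,\nu}$ is not a $*$-homomorphism. Here I would invoke the Kadison--Schwarz inequality for the completely positive contraction $\vartheta_{x,\nu}$, giving $\vartheta_{x,\nu}(a)^{*}\vartheta_{x,\nu}(a)\le \vartheta_{x,\nu}(a^{*}a)$; since closed two-sided ideals in C$^{*}$-algebras are hereditary, $\vartheta_{x,\nu}(a)^{*}\vartheta_{x,\nu}(a)\in J$, and then the C$^{*}$-identity in the quotient $C^{*}_{\nu}(\Gxx)/J$ gives $\|\vartheta_{x,\nu}(a)+J\|^{2}=\|\vartheta_{x,\nu}(a)^{*}\vartheta_{x,\nu}(a)+J\|=0$, so $\vartheta_{x,\nu}(a)\in J$. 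This is the only place where the non-multiplicativity of $\vartheta_{x,\nu}$ is a genuine obstacle, and Kadison--Schwarz is exactly what resolves it.
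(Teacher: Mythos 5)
Your proof is correct, and both directions rest, as in the paper, on Lemma~\ref{lem:induced-ideal} (with $\mathcal B=C^*_\nu(\G)$); the easy implication is handled identically. The only divergence is in the nontrivial implication $I\subset\Ind J\Rightarrow\vartheta_{x,\nu}(I)\subset J$: you pass through an approximate unit to get $\vartheta_{x,\nu}(a^*a)\in J$ and then upgrade to $\vartheta_{x,\nu}(a)\in J$ via the Kadison--Schwarz inequality and hereditarity of ideals, whereas the route implicit in the paper (compare the proof of Lemma~\ref{lem:equiv}) is to take $b=q_V$ with $q_V\in C_c(\Gu)$, $q_V(x)=1$ --- these lie in the multiplicative domain of $\vartheta_{x,\nu}$, so $\vartheta_{x,\nu}(q_V c q_V)=\vartheta_{x,\nu}(c)$ exactly, no limit needed --- and then to handle general $a\in I$ by spanning $I$ by its positive elements $c$ and writing $\vartheta_{x,\nu}(c)=\vartheta_{x,\nu}\bigl((\sqrt c\,)^*\sqrt c\bigr)$ with $\sqrt c\in I$. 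The two arguments are of comparable length; yours uses only generic facts about c.p.\ contractions (Lemma~\ref{lem:contraction}) and so would work verbatim for any c.p.\ map with the property of Lemma~\ref{lem:induced-ideal}, while the paper's exploits the groupoid-specific multiplicative-domain structure from Theorem~\ref{thmnorm} and avoids invoking Kadison--Schwarz. Either way the corollary stands as stated.
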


We can now describe $\Ind J$ as $I(\mathcal J)$ for a particular family of ideals.

\begin{lemma}\label{lem:induced-ideal2}
Assume $J\subset C^*_\nu(\Gxx)$ is an ideal. Define a family $\mathcal J=(J_y)_y$ by $J_{r(\gamma)}:=\Psi_{\gamma,\nu}(J)$ for $\gamma \in \mathcal{G}_{x}$ and $J_{y}:=C_\nu^{*}(\mathcal{G}_{y}^{y})$ for $y\notin [x]:=r(\G_x)$. Then $\Ind J=I(\mathcal J)$, that is, $a\in\Ind J$ if and only if $\vartheta_{y,\nu}(a^*a)\in J_y$ for all $y\in[x]$.
\end{lemma}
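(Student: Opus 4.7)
The plan is to verify two quick preliminaries about the family $\mathcal{J}$ and then establish the two inclusions $\Ind J\subset I(\mathcal J)$ and $I(\mathcal J)\subset\Ind J$ separately, using Corollary~\ref{cor:ind-primitive} and Lemma~\ref{lem:invariance} for the first, and Lemma~\ref{lem:induced-ideal} together with Lemma~\ref{lem:multdomain} for the second.

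First I would check that the definition of $J_y$ for $y\in[x]$ is independent of the choice of $\gamma\in\G_x$ with $r(\gamma)=y$, and that $\mathcal J$ is invariant. For well-definedness, if $\gamma_1,\gamma_2\in\G_x$ with $r(\gamma_1)=r(\gamma_2)$, then $h:=\gamma_2^{-1}\gamma_1\in\Gxx$ is a unitary element of $C_\nu^*(\Gxx)$, and since $J$ is a two-sided ideal we have $\Psi_{h,\nu}(J)=hJh^{-1}=J$. Using the functoriality $\Psi_{\gamma_1,\nu}=\Psi_{\gamma_2,\nu}\circ\Psi_{h,\nu}$ gives $\Psi_{\gamma_1,\nu}(J)=\Psi_{\gamma_2,\nu}(J)$. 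Invariance $\Psi_{\eta,\nu}(J_{s(\eta)})=J_{r(\eta)}$ is then immediate from the same functoriality, distinguishing the cases $s(\eta)\in[x]$ and $s(\eta)\notin[x]$ (in the latter case both fibers are the whole isotropy algebra).

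For the inclusion $\Ind J\subset I(\mathcal J)$, Corollary~\ref{cor:ind-primitive} applied to $I=\Ind J$ yields $(\Ind J)_x=\vartheta_{x,\nu}(\Ind J)\subset J=J_x$. By Lemma~\ref{lem:invariance} the family of isotropy fibers of $\Ind J$ is itself invariant, so for any $\gamma\in\G_x$,
$$(\Ind J)_{r(\gamma)}=\Psi_{\gamma,\nu}\bigl((\Ind J)_x\bigr)\subset\Psi_{\gamma,\nu}(J)=J_{r(\gamma)}.$$
For $y\notin[x]$ the containment $(\Ind J)_y\subset J_y=C_\nu^*(\G^y_y)$ is trivial. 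Hence every $a\in\Ind J$ satisfies $\vartheta_{y,\nu}(a^*a)\in J_y$ for all $y$, i.e.\ $a\in I(\mathcal J)$.

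For the reverse inclusion, which is the main technical point, I would let $\mathcal B\subset C_c(\G)$ be the set of functions supported in a single open bisection. Since any point of $\G$ has a neighbourhood that is an open bisection, a partition-of-unity argument shows $\mathcal B$ spans a dense subspace of $C_\nu^*(\G)$. By Lemma~\ref{lem:multdomain}, every $b\in\mathcal B$ lies in the multiplicative domain of $\vartheta_{x,\nu}$, so for any $a\in I(\mathcal J)$,
$$\vartheta_{x,\nu}(b^*a^*ab)=\vartheta_{x,\nu}(b)^*\,\vartheta_{x,\nu}(a^*a)\,\vartheta_{x,\nu}(b),$$
which lies in $J$ because $\vartheta_{x,\nu}(a^*a)\in J_x=J$ and $J$ is an ideal in $C_\nu^*(\Gxx)$. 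Lemma~\ref{lem:induced-ideal} then gives $a\in\Ind J$. The only mild obstacle is choosing $\mathcal B$ so that the multiplicative-domain identity is available; note that the argument uses only the condition at $y=x$ in the definition of $I(\mathcal J)$, the other fibers being forced by invariance once equality with $\Ind J$ is proved.
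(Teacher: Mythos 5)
Your preliminaries (well-definedness of $J_{r(\gamma)}$, invariance of $\mathcal J$) and the inclusion $\Ind J\subset I(\mathcal J)$ are fine, and essentially parallel to the paper's argument. The reverse inclusion, however, contains a genuine gap. Lemma~\ref{lem:multdomain} does \emph{not} say that every function supported on a single open bisection lies in the multiplicative domain of $\vartheta_{x,\nu}$: it requires each bisection to contain a point of the isotropy group $\Gxx$. If $f\in C_c(W)$ for a bisection $W$ with $x\in r(W)$ and the unique $\gamma\in W$ with $r(\gamma)=x$ satisfies $s(\gamma)\ne x$, then $W\cap\Gxx=\emptyset$, so $\vartheta_{x,\nu}(f)=0$, while $\eta_x(f*f^*)$ has value at least $|f(\gamma)|^2$ at the unit; hence $f$ is not in the multiplicative domain and your identity $\vartheta_{x,\nu}(b^*a^*ab)=\vartheta_{x,\nu}(b)^*\vartheta_{x,\nu}(a^*a)\vartheta_{x,\nu}(b)$ fails for exactly the elements $b$ that matter. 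Moreover, the conclusion you draw from it --- that only the fiber condition at $y=x$ is needed --- is false, so no minor repair of this step can succeed: take $\G$ the pair groupoid on two points $x,y$, so $C^*_\nu(\G)=M_2(\C)$, $\Gxx=\{x\}$ and $J=0$. Then $\Ind J=0$, but the matrix unit $a=e_{xy}$ satisfies $\vartheta_{x,\nu}(a^*a)=\vartheta_{x,\nu}(e_{yy})=0\in J$, so the set $\{a:\vartheta_{x,\nu}(a^*a)\in J\}$ is strictly larger than $\Ind J$; it is precisely the condition at the other orbit point $y$ that excludes $e_{xy}$.

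The paper closes exactly this point by a direct computation instead of a multiplicative-domain argument: for $f\in C_c(W)$ with $x\in r(W)$ and $\gamma\in W$ the element with $r(\gamma)=x$, one checks that $\eta_{x}(f*q*f^{*})=|f(\gamma)|^2(\Ad\gamma)(\eta_{s(\gamma)}(q))$ for all $q\in C_c(\G)$, whence $\vartheta_{x,\nu}(fa^*af^*)=|f(\gamma)|^2\,\Psi_{\gamma,\nu}\bigl(\vartheta_{s(\gamma),\nu}(a^*a)\bigr)$. For $a\in I(\mathcal J)$ this lies in $\Psi_{\gamma,\nu}(J_{s(\gamma)})=J_x=J$, and this is where the hypothesis at the point $s(\gamma)\in[x]$, generally different from $x$, is used in an essential way. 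Substituting this computation for your multiplicative-domain step (and applying Lemma~\ref{lem:induced-ideal} to the dense spanning set of adjoints of functions supported on open bisections, the case $x\notin r(W)$ being trivial) repairs the reverse inclusion; the rest of your write-up can stay as is.
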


\bp
Assume $a\in\Ind J$. Then by Lemma~\ref{lem:induced-ideal} we have $\vartheta_{x,\nu}(a^*a)\in J_x$. As $\Ind J_x=\Ind J_y$ for all $y\in[x]$, for the same reason we have $\vartheta_{y,\nu}(a^*a)\in J_y$ for all $y\in[x]$, so $\Ind J\subset I(\mathcal J)$.

For the opposite inclusion, assume $a\in I(\mathcal J)$. We need to show that $ \vartheta_{x,\nu}(ba^*ab^*)\in J$ for $b$'s spanning a dense subspace of $C^*_\nu(\G)$. It suffices to consider $b=f\in C_c(W)$ for open bisections~$W$ of~$\G$. If $x\notin r(W)$, then $\vartheta_{x,\nu}(fa^*af^*)=0$. Assume next that $x\in r(W)$ and let $\gamma\in W$ be the unique element such that $r(\gamma)=x$.
A direct computation shows that
$$
\eta_{r(\gamma)}(f*q*f^{*}) = |f(\gamma)|^2(\Ad\gamma)(\eta_{s(\gamma)}(q))\quad \text{for all}\quad q\in \CcG,
$$
so by Lemma~\ref{lem:invariance}
$$
\vartheta_{x,\nu}(fa^*af^*)=|f(\gamma)|^2\Psi_{\gamma,\nu}(\vartheta_{s(\gamma),\nu}(a^*a)).
$$
Hence $\vartheta_{x,\nu}(fa^*af^*)\in\Psi_{\gamma,\nu}(J_{s(\gamma)})=J_x=J$.
\ep

\begin{cor}
For any invariant family of ideals $\mathcal J=(J_x)_{x\in\Gu}$, $I(\mathcal J)$ is an ideal in~$C^*_\nu(\G)$.
\end{cor}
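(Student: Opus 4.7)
The plan is to realize $I(\mathcal J)$ as an intersection of induced ideals, each of which is already known to be a (closed two-sided) ideal as the kernel of a representation.

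Fix $x\in\Gu$ and apply Lemma~\ref{lem:induced-ideal2} to the ideal $J_x\subset C^*_\nu(\Gxx)$. Crucially, the invariance hypothesis on $\mathcal J$ says precisely that $\Psi_{\gamma,\nu}(J_{s(\gamma)})=J_{r(\gamma)}$ for all $\gamma$, so the family produced in Lemma~\ref{lem:induced-ideal2} starting from $J_x$ agrees with $(J_y)_{y\in[x]}$ on the orbit of $x$. Hence
\begin{equation*}
\Ind J_x=\{a\in C^*_\nu(\G):\vartheta_{y,\nu}(a^*a)\in J_y\text{ for all }y\in[x]\}.
\end{equation*}
Taking the intersection over all $x\in\Gu$ (equivalently, over one representative from each orbit), and using that every $y\in\Gu$ belongs to some orbit $[x]$, we obtain
\begin{equation*}
\bigcap_{x\in\Gu}\Ind J_x=\{a\in C^*_\nu(\G):\vartheta_{y,\nu}(a^*a)\in J_y\text{ for all }y\in\Gu\}=I(\mathcal J).
\end{equation*}
Since each $\Ind J_x$ is the kernel of the representation $\Ind\pi_x$ (for any representation $\pi_x$ with $\ker\pi_x=J_x$), it is a closed two-sided ideal of $C^*_\nu(\G)$. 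The intersection of a family of closed two-sided ideals is again a closed two-sided ideal, so $I(\mathcal J)$ is an ideal.

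The only step that requires any care is the identification of the family produced by Lemma~\ref{lem:induced-ideal2} with the restriction of $\mathcal J$ to an orbit, and this is exactly the content of invariance of $\mathcal J$. The rest is formal.
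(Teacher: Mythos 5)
Your argument is correct and is essentially the paper's own proof: the paper also deduces the corollary by writing $I(\mathcal J)=\bigcap_{x\in\Gu}\Ind J_x$ via Lemma~\ref{lem:induced-ideal2}, with invariance of $\mathcal J$ ensuring the family produced by that lemma agrees with $(J_y)_y$ on each orbit. Your spelled-out verification of this identification, and the remark that kernels of the induced representations are closed two-sided ideals, just makes explicit what the paper leaves implicit.
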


\bp
By the previous lemma we can write $\displaystyle I(\mathcal J)=\bigcap_{x\in\Gu}\Ind J_x$.
\ep

Using the connection with induced ideals we can use known cases of the Effros--Hahn conjecture to give an answer to Question~\ref{ques1} for certain groupoids.

\begin{thm}
Assume $\G$ is an amenable second countable Hausdorff locally compact \'{e}tale groupoid. Then the ideals of $C^*(\G)=C_{r}^{*}(\G)$ are determined by their isotropy fibers.
\end{thm}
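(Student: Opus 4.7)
By Lemma~\ref{lem:equiv}, it suffices to show that every ideal $I \subset C^*(\G)$ coincides with $I(\{I_x\}_{x\in\Gu})$. The corollary following Lemma~\ref{lem:induced-ideal2} identifies this with $\bigcap_{x\in\Gu}\Ind I_x$, so the goal becomes the equality
\[
I = \bigcap_{x\in\Gu}\Ind I_x.
\]

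The inclusion $I \subset \bigcap_x \Ind I_x$ is essentially formal. Indeed, $\vartheta_{x}(I) = I_x$ trivially satisfies $I_x \subset I_x$, so Corollary~\ref{cor:ind-primitive} gives $I \subset \Ind I_x$ for every $x\in \Gu$.

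For the reverse inclusion, the strategy is to use that a C$^*$-algebra ideal equals the intersection of the primitive ideals containing it, together with the (generalized) Effros--Hahn conjecture. Since $\G$ is amenable, second countable and Hausdorff, the results of Renault~\cite{R} and Ionescu--Williams~\cite{IW} apply, and every primitive ideal $P$ of $C^*(\G) = C^*_r(\G)$ is of the form $P = \Ind Q$ for some primitive ideal $Q \subset C^*(\G^x_x)$ and some $x \in \Gu$. Given any primitive $P \supset I$, write $P = \Ind Q$ as above; then $I \subset \Ind Q$ forces $I_x \subset Q$ by Corollary~\ref{cor:ind-primitive}, and applying the (monotonicity part of the same) corollary backwards yields $\Ind I_x \subset \Ind Q = P$. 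Intersecting over all primitive $P \supset I$ gives $\bigcap_x \Ind I_x \subset I$, completing the proof.

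The main nontrivial input is the Effros--Hahn conjecture in the form proved by Renault and Ionescu--Williams; everything else is a routine application of the machinery (Corollary~\ref{cor:ind-primitive}, Lemmas~\ref{lem:equiv} and~\ref{lem:induced-ideal2}) already established in this section.
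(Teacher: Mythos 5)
Your argument is correct and takes essentially the same route as the paper: both reduce the statement via Lemma~\ref{lem:equiv} to a description of arbitrary ideals through induced ideals, write every ideal as an intersection of primitive ideals, invoke Ionescu--Williams to see that each such primitive ideal is of the form $\Ind Q$, and conclude using Corollary~\ref{cor:ind-primitive} (together with Lemma~\ref{lem:induced-ideal2}). The only cosmetic difference is that you verify condition (2) of Lemma~\ref{lem:equiv} directly, in the form $I=\bigcap_{x}\Ind I_x$, whereas the paper checks the equivalent condition (3), namely that every ideal is of the form $I(\mathcal J)$.
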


\bp
By Lemma~\ref{lem:equiv} we need to show that the ideals $I(\mathcal J)$ exhaust all ideals of $C^*(\G)$. Observe that the collection of ideals $I(\mathcal J)$ is closed under intersections. As every ideal is an intersection of primitive ideals, it is therefore enough to check that every primitive ideal is of the form $I(\mathcal J)$. But this is true, since by \cite{IW} every primitive ideal has the form $\Ind J$ for a primitive ideal $J\subset C^*(\Gxx)$ for some $x$, while $\Ind J$ is of the required form by Lemma~\ref{lem:induced-ideal2}.
\ep

Going beyond the amenable case it is not difficult to give examples where ideals are not determined by their isotropy fibers.

\begin{example}\label{ex:AFS0}
Assume a discrete group $\Gamma$ acts freely on a compact space $X$ and $C(X)\rtimes\Gamma\ne C(X)\rtimes_r\Gamma$. Consider the transformation groupoid $\G=X\rtimes \Gamma$, so that $C^*(\G)=C(X)\rtimes\Gamma$. Let~$I$ be the kernel of the canonical map $C^*(\G)\to C^*_r(\G)=C(X)\rtimes_r\Gamma$. The isotropy groups of $\G$ are trivial and the contractions $\vartheta_x\colon C^*(\G)\to\C$ are the compositions of the canonical conditional expectation $E\colon C(X)\rtimes\Gamma\to C(X)$ with point-evaluations. Since the map $C^*(\G)\to C^*_r(\G)$ intertwines $E$ and the faithful canonical conditional expectation $C(X)\rtimes_r\Gamma\to C(X)$, we get $I \subset \ker E$. Hence the isotropy fibers of $I$ are all zero (this also follows immediately from Proposition~\ref{prop:U(I)} below), yet $I\ne0$.\ee
\end{example}

This example may not look very interesting, since in the reduced C$^*$-algebra $C^*_r(\G)=C(X)\rtimes_r\Gamma$ as above the ideals \emph{are} determined by their isotropy fibers, at least when $\Gamma$ is exact, see Proposition~\ref{prop:U(I)} and Remark~\ref{rem:Renault}. But similar phenomena also show up for reduced groupoid C$^*$-algebras.

\begin{example}\label{ex:AFS}
In \cite{AFS}, Alekseev and Finn-Sell construct a principal Hausdorff locally compact \'{e}tale groupoid $\G$ together with a compact invariant subset $X\subset\Gu$ satisfying the following properties: (a) the reduced and maximal C$^*$-norms on $C_c(\G)$ coincide; (b) $\G_X=X\rtimes\Gamma$ for a free action of a discrete group $\Gamma$ on $X$ such that $C(X)\rtimes\Gamma\ne C(X)\rtimes_r\Gamma$. Consider the ideal $J:=C^*_r(\G_{X^c})=C^*(\G_{X^c})$ in $C^*_r(\G)=C^*(\G)$, and let $I$ be the kernel of the composition of $C^*(\mathcal{G}) \to C^*(\G_X)$ with $C^*(\G_X)\to C^*_r(\G_X)$. Then $J\subset I$ and $J\ne I$, since
$$
0 \to C^{*}(\mathcal{G}_{X^c}) \to C^{*}(\mathcal{G}) \to C^*(\G_X)\to 0
$$
is exact. Then, similarly to the previous example, we have $J_x=I_x=\C$ for $x\in X^c$ and $J_x=I_x=0$ for $x\in X$, yet $J\ne I$. \ee
\end{example}

Returning to general locally compact \'{e}tale groupoids, we will next describe when isotropy fibers are proper ideals. For an ideal $I\subset C^*_\nu(\G)$, let $U(I)\subset\Gu$ be the open, possibly empty, subset such that
$$
C_0(\Gu)\cap I=C_0(U(I)).
$$
It is well-known and easy to check that the set $U(I)$ is invariant. It is also clear that $U(I)=\Gu$ if and only if $I=C^*_\nu(\G)$.

\begin{prop}\label{prop:U(I)}
Assume $\G$ is a locally compact \'{e}tale groupoid and $C^*_\nu(\G)$ is a groupoid C$^*$-algebra. Then, for any ideal $I\subset C^*_\nu(\G)$, we have
$$
U(I)=\{x\in\Gu: I_x=C^*_\nu(\Gxx)\}.
$$
\end{prop}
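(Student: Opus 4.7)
The plan is to prove the two inclusions separately. The inclusion $U(I)\subset\{x: I_x=C^*_\nu(\Gxx)\}$ should be essentially immediate: for $x\in U(I)$, I would pick $q\in C_c(U(I))$ with $q(x)=1$, so that $q\in C_0(U(I))=C_0(\Gu)\cap I\subset I$. A direct inspection of $\eta_x(q)=q|_{\Gxx}$ shows this function takes value $1$ at the group identity $x\in\Gxx$ and vanishes elsewhere on $\Gxx$, since points of $\Gxx\setminus\{x\}$ lie outside $\Gu$, where $q$ is zero by the extension convention. Hence $\eta_x(q)$ equals the unit of $\C\Gxx$, so $\vartheta_{x,\nu}(q)=1\in C^*_\nu(\Gxx)$, forcing $I_x=C^*_\nu(\Gxx)$.

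For the reverse inclusion I plan to prove the contrapositive: if $x\notin U(I)$, then $I_x$ is a proper ideal. The strategy is to exhibit a state $\varphi$ on $C^*_\nu(\Gxx)$ that vanishes on $I_x$; since $\varphi(1)=1$, this will force $1\notin I_x$. To produce $\varphi$, I would first construct a state $\psi$ on $C^*_\nu(\G)$ satisfying $\psi|_I=0$ and $\psi|_{C_0(\Gu)}=\ev_x$. Proposition~\ref{prop:pointeval} then yields $\psi=\varphi\circ\vartheta_{x,\nu}$ for some state $\varphi$ on $C^*_\nu(\Gxx)$, and the annihilation $\psi|_I=0$ translates directly into $\varphi|_{I_x}=0$.

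The construction of $\psi$ proceeds as follows. Since $x\notin U(I)$, the character $\ev_x$ annihilates $C_0(U(I))=C_0(\Gu)\cap I$, so it descends to a character on the image of $C_0(\Gu)$ inside the quotient $C^*_\nu(\G)/I$, a C$^*$-subalgebra isomorphic to $C_0(U(I)^c)$. I would then extend this character to a state on $C^*_\nu(\G)/I$ using the standard state extension theorem for C$^*$-subalgebras, and pull back along the quotient map $C^*_\nu(\G)\to C^*_\nu(\G)/I$ to obtain $\psi$. The only step that requires a bit of care is the state extension in the possibly non-unital setting, but this is a routine application of Hahn-Banach after passing to unitizations; everything else reduces to combining the elementary computation $\vartheta_{x,\nu}(q)=1$ used in the easy direction with Proposition~\ref{prop:pointeval}.
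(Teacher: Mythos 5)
Your proposal is correct and follows essentially the same route as the paper: the easy inclusion via a function $q\in C_0(U(I))$ with $\vartheta_{x,\nu}(q)=1$, and the hard inclusion by building a state killing $I$ whose restriction to $C_0(\Gu)$ is $\ev_x$ and invoking Proposition~\ref{prop:pointeval}. The only cosmetic difference is that you extend the character from the image of $C_0(\Gu)$ in the quotient $C^*_\nu(\G)/I$ and pull back, whereas the paper extends directly from the subalgebra $I+C_0(\Gu)\subset C^*_\nu(\G)$ using the same isomorphism $(I+C_0(\Gu))/I\cong C_0(U(I)^c)$; these are the same construction.
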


\bp
If $f\in C_0(U(I))=C_{0}(\Gu)\cap I$, then $f(x)1\in\vartheta_{x , \nu}(I)$. Therefore we only need to prove that $\vartheta_{\nu,x}(I)$ is a proper ideal for all $x\in U(I)^c$.

We use an idea from~\cite{AS}*{Theorem~1}, see also \citelist{\cite{BKKO}*{Lemma~7.9}\cite{BNRSW}*{Theorem~3.2}\cite{Ka}*{Lemma~2.4}} for its recent similar applications. Fix a point $x\in U(I)^c$ and consider the state $\psi$ on the C$^*$-subalgebra $I+C_0(\Gu)\subset C^*_\nu(\G)$ defined as the composition of the homomorphisms
$$
I+C_0(\Gu)\to (I+C_0(\Gu))/I\cong C_0(\Gu)/(C_0(\Gu)\cap I)\cong C_0(U(I)^c)
$$
with the point-evaluation $\ev_x\colon C_0(U(I)^c)\to\C$. Extend it to a state on $C^*_\nu(\G)$. By Proposition~\ref{prop:pointeval} this extension must be of the form $\varphi\circ\vartheta_{x, \nu}$ for a state $\varphi$ on $C^*_\nu(\Gxx)$. As $\psi(I)=0$ by construction, we have $\varphi(\vartheta_{x,\nu}(I))=0$, hence $\vartheta_{x,\nu}(I)\ne C^*_\nu(\Gxx)$.
\ep

We are now ready to prove Theorem~\ref{thm:A}. We remind of the statement.

\begin{thm}\label{thm:induction}
Assume $\G$ is a locally compact \'{e}tale groupoid, $C^*_\nu(\G)$ is a groupoid C$^*$-algebra and $I\subset C^*_\nu(\G)$ is a proper ideal. Then, for every point $x\in U(I)^c$, there is a primitive ideal $J\subset C^*_\nu(\Gxx)$ such that $I\subset\Ind J$.
\end{thm}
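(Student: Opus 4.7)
The plan is to reduce the statement to the isotropy-fiber picture built up in the preceding pages and then invoke a standard fact from C$^*$-algebra theory.

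First, I would apply Proposition~\ref{prop:U(I)} to the hypothesis $x\in U(I)^c$. By that proposition, $U(I)^c$ is precisely the set of units at which the isotropy fiber is \emph{proper}, so $I_x=\vartheta_{x,\nu}(I)$ is a proper (closed two-sided) ideal of $C^*_\nu(\Gxx)$. This is the one place where the assumption $x\in U(I)^c$ (equivalently, the properness of $I$ combined with the choice of $x$) is used.

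Next, since $I_x$ is a proper ideal in the C$^*$-algebra $C^*_\nu(\Gxx)$, the quotient $C^*_\nu(\Gxx)/I_x$ is a nonzero C$^*$-algebra. It therefore admits at least one irreducible representation; pulling this representation back along the quotient map yields an irreducible representation of $C^*_\nu(\Gxx)$ whose kernel $J$ is a primitive ideal containing $I_x$. (Equivalently, one may use Zorn's lemma to extend $I_x$ to a maximal modular ideal, which is automatically primitive.)

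Finally, the conclusion $I\subset\Ind J$ is immediate from Corollary~\ref{cor:ind-primitive}, which characterizes containment in an induced ideal exactly by the condition $\vartheta_{x,\nu}(I)\subset J$. Since $I_x\subset J$ by construction, this completes the argument. There is no real obstacle here: all the substantive work has already been carried out in establishing Proposition~\ref{prop:U(I)} (properness of the isotropy fiber outside $U(I)$) and Corollary~\ref{cor:ind-primitive} (the induced-ideal/isotropy-fiber duality); the theorem is the clean packaging of those two facts together with the elementary observation that any proper ideal of a C$^*$-algebra sits inside some primitive ideal.
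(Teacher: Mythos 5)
Your proposal is correct and follows exactly the paper's own argument: Proposition~\ref{prop:U(I)} gives properness of $I_x=\vartheta_{x,\nu}(I)$, a proper ideal in a C$^*$-algebra is contained in a primitive ideal, and Corollary~\ref{cor:ind-primitive} then yields $I\subset\Ind J$. No differences worth noting.
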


\bp
Take any $x\in U(I)^c$. By Proposition~\ref{prop:U(I)}, $\vartheta_{x,\nu}(I)$ is a proper ideal in $C^*_\nu(\Gxx)$, hence it is contained in a primitive ideal $J\subset C^*_\nu(\Gxx)$. Then $I\subset\Ind J$ by Corollary~\ref{cor:ind-primitive}.
\ep

We end this section with a general remark that will play no further role in the paper.

\begin{remark}
We have described the intersections of induced ideals as ideals arising from invariant families of ideals in $C^*_\nu(\Gxx)$. But there is a third point view on them: they are exactly the ideals in $C_{\nu}^{*}(\G)$ that can be obtained as intersections of elements of
$$
\mathcal{M}:=\{ L \subset C_{\nu}^{*}(\G) \; | \; \text{$L$ is a maximal closed left ideal and $L \, C_{0}(\Gu) \subset L$} \} \; .
$$

Let us sketch why this is true. Recall that there is a bijection $\varphi \mapsto \mathcal{L}_{\varphi}:=\{a\in \mathcal{A} \; | \; \varphi(a^{*}a)=0 \}$ between the pure states and the maximal closed left ideals of a C$^{*}$-algebra $\mathcal{A}$, and that any closed proper left ideal of $\mathcal{A}$ is an intersection of maximal closed left ideals. If $\varphi$ is a pure state on~$C_{\nu}^{*}(\Gxx)$, then $\varphi \circ \vartheta_{x , \nu}$ is a pure state on $C_{\nu}^{*}(\G)$  by Proposition \ref{prop:pointeval} and clearly $\mathcal{L}_{\varphi \circ \vartheta_{x,  \nu}} \in \mathcal{M}$.

We claim that all elements of $\mathcal{M}$ are of this form. So let $\psi$ be a pure state on $C_{\nu}^{*}(\G)$ with $\mathcal{L}_{\psi} \in \mathcal{M}$ and let $f\in C_{c}(\Gu)$. By assumption the functional $\psi(f^{*} \cdot f)$ satisfies $\mathcal{L}_{\psi} \subset \mathcal{L}_{\psi(f^{*} \cdot f)}$. Hence $\mathcal{L}_{\psi(f^{*} \cdot f)}= \mathcal{L}_{\psi}$ or $\mathcal{L}_{\psi(f^{*} \cdot f)}=C_{\nu}^{*}(\G)$, in either case it follows that $\psi(f^{*}qf)=\psi(f^{*}f)\psi(q)$ for $q\in C_{c}(\Gu)$, and therefore $\psi|_{C_{0}(\Gu)}= \ev_{x}$ for some $x\in \Gu$. By Proposition \ref{prop:pointeval} it follows that $\psi=\varphi \circ \vartheta_{x, \nu}$ for a pure state $\varphi$ on $C_{\nu}^{*}(\Gxx)$, proving the claim.

Having described the elements of $\mathcal{M}$, one can use \eqref{eq:I(J)} and the observation that $\mathcal{L}_{\varphi \circ \vartheta_{x, \nu}}=\{ a\in C_{\nu}^{*}(\G) \; | \;  \vartheta_{x, \nu}(a^{*}a)\in \mathcal{L}_{\varphi}\}$, to prove that an ideal $J\subset C_{\nu}^{*}(\G)$ satisfies
$
J=\bigcap_{L\in \mathcal{M}, \; J\subset L} L
$
if and only if there exists an invariant family $\mathcal{J}$ of ideals with $J=I(\mathcal{J})$.
\end{remark}

\bigskip

\section{Maximal ideals} \label{sectionMaximal}

We continue to assume that $\G$ is a locally compact \'etale groupoid and $C^*_\nu(\G)$ is a groupoid C$^*$-algebra. Although in general ideals of $C^*_\nu(\G)$ are not determined by their isotropy fibers, in this section we will show that it is possible to completely classify the maximal ideals in terms of their fibers.

\smallskip
We will need the following definition. Recall that $(\Ind J)_x\subset J$ for any ideal $J\subset C^*_\nu(\Gxx)$.

\begin{defn}
We say that an ideal $J\subset C^*_\nu(\Gxx)$ is \emph{$\G$-invariant}, if $J=(\Ind J)_x$, that is,
$$
J=\vartheta_{x,\nu}(\Ind J).
$$
\end{defn}

\begin{lemma}\label{lem:G-invariance}
For any ideal $I\subset C^*_\nu(\G)$ and any $x\in\Gu$, the ideal $I_x\subset C^*_\nu(\Gxx)$ is $\G$-invariant.
\end{lemma}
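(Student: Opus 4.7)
The proof should be a short, essentially tautological consequence of Corollary~\ref{cor:ind-primitive}, which says that for any ideals $I'\subset C^*_\nu(\G)$ and $J'\subset C^*_\nu(\Gxx)$, one has $I'\subset\Ind J'$ if and only if $\vartheta_{x,\nu}(I')\subset J'$. The plan is to establish the two inclusions $I_x\subset\vartheta_{x,\nu}(\Ind I_x)$ and $\vartheta_{x,\nu}(\Ind I_x)\subset I_x$ separately by feeding different data into this corollary.

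For the nontrivial inclusion $I_x\subset\vartheta_{x,\nu}(\Ind I_x)$: since by definition $\vartheta_{x,\nu}(I)=I_x$, the inclusion $\vartheta_{x,\nu}(I)\subset I_x$ is trivially true. Corollary~\ref{cor:ind-primitive} then yields $I\subset\Ind I_x$. Applying $\vartheta_{x,\nu}$, which is order-preserving on positive elements and linear, gives $I_x=\vartheta_{x,\nu}(I)\subset\vartheta_{x,\nu}(\Ind I_x)$.

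For the reverse inclusion, I would observe that it is in fact the general statement $(\Ind J)_x\subset J$ alluded to just before the definition of $\G$-invariance: applying Corollary~\ref{cor:ind-primitive} with $I'=\Ind I_x$ and $J'=I_x$, the tautology $\Ind I_x\subset\Ind I_x$ gives $\vartheta_{x,\nu}(\Ind I_x)\subset I_x$. Combining the two inclusions produces $I_x=\vartheta_{x,\nu}(\Ind I_x)$, which is exactly the $\G$-invariance of $I_x$.

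There is no real obstacle here; the content of the lemma is entirely packaged in Corollary~\ref{cor:ind-primitive}, and the proof amounts to invoking it twice with the obvious inputs. The only thing to be a bit careful about is making sure one applies $\vartheta_{x,\nu}$ only after the inclusion $I\subset\Ind I_x$ is in place, rather than trying to manipulate $\vartheta_{x,\nu}(I)$ directly inside an unrelated ideal.
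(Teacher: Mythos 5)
Your proof is correct and is essentially the paper's own argument: the paper also sets $J:=I_x$, gets $I\subset\Ind J$ from Corollary~\ref{cor:ind-primitive} (since $\vartheta_{x,\nu}(I)=J$ trivially), applies $\vartheta_{x,\nu}$ to obtain $I_x\subset(\Ind J)_x$, and combines this with the general inclusion $(\Ind J)_x\subset J$. The only cosmetic difference is that applying $\vartheta_{x,\nu}$ to the inclusion $I\subset\Ind I_x$ is simply taking images of subsets, so no appeal to positivity or linearity is needed.
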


\bp
For $J:=I_x$ we have $(\Ind J)_x\subset J$ and $I\subset\Ind J$ by Corollary~\ref{cor:ind-primitive}, whence $J=I_x\subset (\Ind J)_x$ and therefore $(\Ind J)_x=J$.
\ep

If $(I_i)_i$ is a chain of proper $\G$-invariant ideals, then $\overline{\bigcup_i I_i}$ is proper and $\G$-invariant. A standard application of Zorn's lemma shows then that the nonempty set of proper $\G$-invariant ideals in~$C^*_\nu(\Gxx)$ contains maximal elements, which we call \emph{maximal $\G$-invariant ideals}.

\smallskip

We will also need the following known property of induced ideals.

\begin{lemma}\label{lem:orbit}
If $J\subset C^*_\nu(\Gxx)$ is a proper ideal, then $U(\Ind J)^c$ is the closure of the orbit $[x]$ of $x$.
\end{lemma}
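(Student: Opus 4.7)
The plan is to compute $C_0(\Gu) \cap \Ind J$ directly using the induced representation, which will identify $U(\Ind J)$ by its definition. Fix a representation $\pi\colon C^*_\nu(\Gxx) \to B(H)$ with $\ker\pi = J$; properness of $J$ forces $H \neq \{0\}$, and by definition $\Ind J = \ker(\Ind\pi)$. The goal is to show $C_0(\Gu) \cap \Ind J = C_0(\overline{[x]}^c)$.

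The first step is to analyze how $C_0(\Gu)$ acts on $\Ind H$. For $f \in C_c(\Gu)$, formula \eqref{eq:rhox2} gives $((\Ind\pi)(f)\xi)(g) = \sum_{h \in \G^{r(g)}} f(h)\, \xi(h^{-1}g)$, and since $f$ is supported on units, only $h = r(g)$ contributes, yielding $((\Ind\pi)(f)\xi)(g) = f(r(g))\,\xi(g)$. By density this extends to all $f \in C_0(\Gu)$: the operator $(\Ind\pi)(f)$ is pointwise multiplication by $f \circ r$ on $\Ind H$. Hence $(\Ind\pi)(f) = 0$ if and only if $f(r(g))\,\xi(g) = 0$ for every $\xi \in \Ind H$ and every $g \in \G_x$.

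The crucial step is producing, for each $g_0 \in \G_x$ and each nonzero $\eta \in H$, a vector $\xi \in \Ind H$ with $\xi(g_0) = \eta$. Define $\xi$ to be zero outside the coset $g_0\Gxx$ and set $\xi(g_0 h) := \pi(h)^*\eta$ for $h \in \Gxx$; this is well-defined since $g_0$ is invertible, is equivariant by construction, and has $\|\xi\|^2 = \|\eta\|^2 < \infty$, so it belongs to $\Ind H$. Since this $\xi$ satisfies $\xi(g_0) = \eta \neq 0$, the vanishing condition on $f$ reduces to $f = 0$ on $[x] = r(\G_x)$, which by continuity is equivalent to $f|_{\overline{[x]}} = 0$. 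Consequently $C_0(\Gu) \cap \Ind J = C_0(\overline{[x]}^c)$, giving $U(\Ind J) = \overline{[x]}^c$ and therefore $U(\Ind J)^c = \overline{[x]}$. The only substantive ingredient is the explicit construction of the vector $\xi$ exhibiting a nonzero value at a prescribed point of $\G_x$; everything else is bookkeeping with the induced-representation formula.
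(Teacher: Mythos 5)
Your proof is correct, and it takes a different route from the paper's. The paper disposes of this lemma in one line by invoking its fiberwise description of induced ideals (Lemma~\ref{lem:induced-ideal2}): for $f\in C_0(\Gu)$ one has $\vartheta_{y,\nu}(f^*f)=|f(y)|^2 1$, so $f\in\Ind J$ iff $|f(y)|^2 1\in J_y=\Psi_{\gamma,\nu}(J)$ for all $y\in[x]$, and since these fibers are proper this forces $f=0$ on $[x]$, hence on $\overline{[x]}$. You instead work directly with the concrete model of $\Ind H$: you check that $(\Ind\pi)(f)$ acts as multiplication by $f\circ r$ (first on $C_c(\Gu)$ via the formula \eqref{eq:rhox2}, then on $C_0(\Gu)$ by density, using that the norm on $C_0(\Gu)$ is the sup norm by \eqref{eq:norm-bisection}), and you exhibit, for each $g_0\in\G_x$, a single-coset vector $\xi(g_0h)=\pi(h)^*\eta$ (well defined, equivariant, of finite norm since $\pi(h)$ is unitary) detecting the point $r(g_0)$; properness of $J$ guarantees $H\neq\{0\}$ so a nonzero $\eta$ exists. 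Both arguments are sound; the paper's is shorter because it leans on the machinery of isotropy fibers and invariant families already built up, whereas yours is self-contained, needing only the definition of $\Ind J$ as $\ker(\Ind\pi)$ and the explicit induced-representation picture, at the cost of redoing by hand a computation that is implicitly packaged in Lemmas~\ref{lem:induced-ideal} and~\ref{lem:induced-ideal2}.
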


\bp
Using Lemma~\ref{lem:induced-ideal2} we see that $f\in\Ind J$ if and only if $|f(y)|^21\in J_y$ for all $y\in[x]$, that is, if and only if $f$ vanishes on $\overline{[x]}$.
\ep

We are now ready to parameterize the maximal ideals. The following is a more precise version of Theorem~\ref{thm:B}.

\begin{thm}\label{thm:maximal}
Assume $\G$ is a locally compact \'etale groupoid and $C^*_\nu(\G)$ is a groupoid C$^*$-algebra. For every minimal closed invariant subset $F\subset\Gu$, choose a point $x_F\in F$. Then the map $I\mapsto (U(I)^c,I_{x_{U(I)^c}})$ is a bijection between the set of maximal ideals in $C^*_\nu(\G)$ and the set of pairs $(F,J)$ consisting of a minimal closed invariant subset $F\subset\Gu$ and a maximal $\G$-invariant ideal $J\subset C^*_\nu(\G^{x_F}_{x_F})$. The inverse map is given by $(F,J)\mapsto\Ind J$.
\end{thm}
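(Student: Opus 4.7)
The strategy is to verify that both assignments are well-defined and mutually inverse, chaining together Theorem~\ref{thm:induction}, Corollary~\ref{cor:ind-primitive}, Lemma~\ref{lem:orbit}, Lemma~\ref{lem:G-invariance} and Proposition~\ref{prop:U(I)}. First I would check that the assignment $I \mapsto (U(I)^c, I_{x_{U(I)^c}})$ sends a maximal ideal to a valid pair. For $I$ maximal and any $y \in U(I)^c$, Theorem~\ref{thm:induction} gives a primitive ideal $J \subset C^*_\nu(\G^y_y)$ with $I \subset \Ind J$; by Lemma~\ref{lem:orbit} the ideal $\Ind J$ is proper, so maximality forces $I = \Ind J$ and therefore $U(I)^c = \overline{[y]}$. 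Since $y$ was arbitrary in $U(I)^c$, the set $F := U(I)^c$ is minimal closed invariant. By Lemma~\ref{lem:G-invariance} the fiber $I_{x_F}$ is $\G$-invariant, and by Proposition~\ref{prop:U(I)} it is proper. If $J' \supsetneq I_{x_F}$ were a proper $\G$-invariant ideal, Corollary~\ref{cor:ind-primitive} would give $I \subset \Ind J'$, which is proper by Lemma~\ref{lem:orbit}; the $\G$-invariance of $J'$ rules out $\Ind J' = I$, since that would force $J' = (\Ind J')_{x_F} = I_{x_F}$. This contradicts maximality of $I$, so $I_{x_F}$ is maximal $\G$-invariant.

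Next I would show $(F, J) \mapsto \Ind J$ lands in the maximal ideals. The ideal $\Ind J$ is proper by Lemma~\ref{lem:orbit}. Suppose $I' \supsetneq \Ind J$ is another proper ideal. Then $U(I')^c$ is a nonempty closed invariant subset of $F$ (nonempty because $I'$ is proper), so minimality of $F$ forces $U(I')^c = F$. In particular $x_F \in U(I')^c$, so by Proposition~\ref{prop:U(I)} the fiber $(I')_{x_F}$ is proper; it is $\G$-invariant by Lemma~\ref{lem:G-invariance} and contains $(\Ind J)_{x_F} = J$, so maximal $\G$-invariance of $J$ yields $(I')_{x_F} = J$. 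Corollary~\ref{cor:ind-primitive} then gives $I' \subset \Ind J$, contradicting $I' \supsetneq \Ind J$.

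Finally, I would verify that the two compositions reduce to the identity. For a pair $(F, J)$, Lemma~\ref{lem:orbit} combined with minimality of $F$ gives $U(\Ind J)^c = \overline{[x_F]} = F$, and the $\G$-invariance of $J$ gives $(\Ind J)_{x_F} = J$; conversely, given a maximal $I$ with $F = U(I)^c$, Corollary~\ref{cor:ind-primitive} produces $I \subset \Ind I_{x_F}$, which is proper by Lemma~\ref{lem:orbit}, so maximality of $I$ forces equality. The subtlest point throughout is that the identity $(\Ind J)_{x_F} = J$, which is precisely the $\G$-invariance of $J$, is what promotes the natural inclusions to equalities and thereby drives both the maximality step and the injectivity of the parametrization; mere invariance of the fiber family would be insufficient.
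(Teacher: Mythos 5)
Your proposal is correct and follows essentially the same route as the paper's proof: properness of the fibers via Proposition~\ref{prop:U(I)}, the equivalence $I\subset\Ind J\Leftrightarrow I_x\subset J$ from Corollary~\ref{cor:ind-primitive}, Lemma~\ref{lem:orbit} for $U(\Ind J)^c=\overline{[x]}$, and Lemma~\ref{lem:G-invariance} plus maximality to force all inclusions into equalities. The only cosmetic difference is that in the first step you invoke Theorem~\ref{thm:induction} to get $I=\Ind J$ for a primitive $J$, where the paper uses $I\subset\Ind I_x$ directly; both rest on the same ingredients.
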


Recall that a closed invariant subset $F\subset \Gu$ is minimal if and only if $\overline{[x]}=F$ for all $x\in F$.
Note that existence of minimal closed invariant subsets and maximal ideals is not guaranteed unless $\Gu$ is compact. So if for a noncompact $\Gu$ either the set of maximal ideals in $C^*_\nu(\G)$ or the set of minimal closed invariant subsets of $\Gu$ is empty, then the theorem says that the other set is empty as well.

\bp[Proof of Theorem~\ref{thm:maximal}]
Let $I$ be a maximal ideal. Take any $x\in U(I)^c$. By Proposition~\ref{prop:U(I)}, the ideal $I_x$ is proper. Then $\Ind I_x$ is a proper ideal in $C^*_\nu(G)$ that contains $I$ by Corollary~\ref{cor:ind-primitive}. Hence $I=\Ind I_x$ by maximality of $I$. Then, by Lemma~\ref{lem:orbit}, $U(I)^c=\overline{[x]}$. Since $x$ was arbitrary, this shows that $F:=U(I)^c$ is a minimal closed invariant subset of $\Gu$.

Similarly, if $J$ is any proper $\G$-invariant ideal containing $I_x$, then $I=\Ind J$ and hence $I_x=J$. Therefore $I_x$ is a maximal $\G$-invariant ideal for any $x\in U(I)^c$. Hence the map $I\mapsto (F,I_{x_F})$ in the statement of the theorem is well-defined. As $I=\Ind I_{x_F}$, this map is injective.

It remains to check surjectivity. Assume $F\subset\Gu$ is a minimal closed invariant subset and $J\subset C^*_\nu(\G^{x_F}_{x_F})$ is a maximal $\G$-invariant ideal. Consider $I:=\Ind J$. Then $I_{x_F}=J$ by $\G$-invariance of $J$ and $U(I)^c=\overline{[x_F]}=F$ by minimality of $F$. Therefore to finish the proof it remains to show that $I$ is maximal.

Assume $I\subset H$ for a proper ideal $H$. Then $\emptyset\ne U(H)^c\subset U(I)^c=F$, hence $U(H)^c=F$ by minimality of $F$. By Proposition~\ref{prop:U(I)}, the ideal $H_{x_F}$ is proper. By Lemma~\ref{lem:G-invariance} it is also $\G$-invariant. As $J=I_{x_F}\subset H_{x_F}$, we get $H_{x_F}=J$ by maximality of $J$. Then $I\subset H\subset \Ind H_{x_F}=\Ind J=I$. In conclusion $I=H$, proving that $I$ is maximal.
\ep

In practice it may not be easy to describe maximal $\G$-invariant ideals. We will give an example where we can describe the maximal $\G$-invariant ideals in Corollary~\ref{cor:max}.

\begin{remark}
The maximal ideals are primitive, but maximal $\G$-invariant ideals are in general not. It is possible to formulate the above parameterization in terms of primitive ideals as follows. Recall that the closed subsets of $\Prim C^*_\nu(\Gxx)$ in the hull-kernel topology have the form $V(J)=\{I\in \Prim C^*_\nu(\Gxx): J\subset I\}$ for ideals $J\subset C^*_\nu(\Gxx)$. We have a map on the set $\mathrm{Cl}(\Prim C^*_\nu(\Gxx))$ of closed subsets of $\Prim C^*_\nu(\Gxx)$ defined by $V(J)\mapsto V((\Ind J)_x)$. Let us say that a closed subset of $\Prim C^*_\nu(\Gxx)$ is $\G$-invariant if it is a fixed point of this map. The map $J\mapsto V(J)$ gives a one-to-one correspondence between the $\G$-invariant ideals in $C^*_\nu(\Gxx)$ and the $\G$-invariant closed subsets of $\Prim C^*_\nu(\Gxx)$. Then we can say that the maximal ideals of $C^*_\nu(\G)$ are parameterized by the pairs $(F,V)$ consisting of a minimal closed invariant subset $F\subset\Gu$ and a minimal $\G$-invariant closed subset $V\subset \Prim C^*_\nu(\G^{x_F}_{x_F})$. The maximal ideal corresponding to $(F,V)$ is $\Ind J$, where $J$ is any element of $V$.
\end{remark}

\bigskip

\section{Simplicity, the ideal intersection property and a uniqueness theorem}\label{sec:simplicity}

In this section we apply our results to study the question whether every nontrivial ideal in~$C^*_r(\G)$ intersects nontrivially with $C^*_r(\HH)$ for an open subgroupoid $\HH\subset\G$.

\smallskip

We start with the case $\HH=\Gu$. Then, if the above question has positive answer, $\G$ is said to have the \emph{ideal intersection property}. On the one hand, the results we are going to prove about this property generalize a number of earlier results, in particular, some recent results on transformation groupoids. On the other hand, they are not going to be as sharp as the very recent results of Kennedy--Kim--Li--Raum--Ursu~\cite{KKLRU}. The main reason to include them is that they can still provide a quicker and more elementary route to properties that are important in concrete examples.

\smallskip

Recall that a subset of a topological space is called nowhere dense if its closure has empty interior. A subset is called meager, if it is a countable union of nowhere dense sets, and the complement of a meager set is by definition a residual set. 

From now on, we view $\rho_x$ as a representation of $C^*_r(\G)$. Following~\cite{MR4246403}, define the \emph{singular ideal} of $C^*_r(\G)$ by
$$
J_\sing:=\{a\in C^*_r(\G)\mid\ \text{the set of}\ x\in \G^{(0)}\ \text{such that}\ \rho_x(a)\delta_x\ne0\ \text{is meager}\},
$$
and the \emph{essential groupoid C$^*$-algebra} of $\G$ by
$$
C^*_\ess(\G):=C^*_r(\G)/J_\sing.
$$
If $\G$ is Hausdorff, then $J_\sing=0$ and $C^*_\ess(\G)=C^*_r(\G)$. By \cite{NS}*{Proposition~1.12}, if $\G$ can be covered by countably many open bisections, then
$$
J_\sing=\{a\in C^*_r(\G)\mid\ \text{the set of}\ x\in \G^{(0)}\ \text{such that}\ \rho_x(a)\ne0\ \text{is meager}\}.
$$

We will need the following observation, which is well-known in the Hausdorff case.

\begin{lemma} \label{lem:countint}
Assume $\G$ is a locally compact \'etale groupoid such that either $\G$ is Hausdorff or~$\G$ can be covered by countably many open bisections. Then, for any dense subset $X\subset\Gu$, we have
\begin{equation*}
\bigcap_{x\in X}\ker\rho_x\subset J_\sing.
\end{equation*}
\end{lemma}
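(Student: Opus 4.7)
The plan is to introduce the ``Fourier'' map $a\mapsto\hat a$, where $\hat a(g):=\langle\rho_{s(g)}(a)\delta_{s(g)},\delta_g\rangle$, which is well-defined and bounded by $\|a\|_r$ for every $a\in C^*_r(\G)$, and which satisfies $\hat f=f$ for $f\in C_c(\G)$. The strategy is to show that the assumption forces $\hat a\equiv 0$ on all of $\G$; this is strictly stronger than what is needed, since it makes the set $\{x:\rho_x(a)\delta_x\ne 0\}$ outright empty, hence meager, so $a\in J_\sing$.

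The central technical step is to establish continuity of $\hat a|_W$ for every open bisection $W\subset\G$. First, specialising to $W=\Gu$: for $f\in C_c(\G)$ and $y\in\Gu$, $|f(y)|=|\langle\rho_y(f)\delta_y,\delta_y\rangle|\le\|\rho_y(f)\|\le\|f\|_r$, so $b\mapsto b|_{\Gu}$ extends to a contraction $C^*_r(\G)\to C_0(\Gu)$ which by density coincides with $a\mapsto\hat a|_{\Gu}$; in particular $\hat a|_{\Gu}$ is continuous. For a general open bisection $W$ and any $f\in C_c(W)$, since $f$ is supported in a bisection we have $\rho_y(f)\delta_y=f(g)\delta_g$ whenever $g$ is the unique element of $W\cap\G_y$ (and $\rho_y(f)\delta_y=0$ otherwise). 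A direct computation of $\hat{f^*a}(y)=\langle\rho_y(a)\delta_y,\rho_y(f)\delta_y\rangle$ then yields
$$
\hat{f^{*}a}(s(g))=\overline{f(g)}\,\hat a(g)\quad\text{for every }g\in W.
$$
The left-hand side is continuous in $g\in W$ because $\hat{f^*a}|_{\Gu}$ is continuous on $\Gu$ and $s|_W:W\to s(W)$ is a homeomorphism. Given any $g_0\in W$, choosing $f\in C_c(W)$ with $f(g_0)=1$ makes $\overline{f(g)}$ nonvanishing in a neighbourhood of $g_0$, from which continuity of $\hat a$ near $g_0$ follows.

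Next I would exploit the hypothesis. For any $\gamma\in\G$ with $s(\gamma)=x$ and $r(\gamma)=y$, the right-translation $U\colon\ell^2(\G_x)\to\ell^2(\G_y)$, $\delta_g\mapsto\delta_{g\gamma^{-1}}$, intertwines $\rho_x$ and $\rho_y$. Hence $\rho_x(a)=0$ for all $x\in X$ propagates to $\rho_y(a)=0$ for every $y\in[X]:=r(s^{-1}(X))$, and in particular $\rho_y(a)\delta_y=0$, i.e.\ $\hat a$ vanishes on $s^{-1}([X])$. Since $X\subset[X]$ is dense in $\Gu$ and $s$ is a local homeomorphism, hence open, $s^{-1}([X])$ is dense in $\G$.

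Combining the previous two paragraphs, for each open bisection $W$ the continuous function $\hat a|_W$ vanishes on the dense subset $W\cap s^{-1}([X])$, so $\hat a|_W\equiv 0$. Open bisections cover $\G$ in either case of the hypothesis (via the local homeomorphism property of $r$ when $\G$ is Hausdorff, and by assumption otherwise), so $\hat a\equiv 0$ on $\G$, giving $\rho_x(a)\delta_x=0$ for every $x\in\Gu$ and hence $a\in J_\sing$. The principal obstacle in this plan is the continuity of $\hat a$ on arbitrary open bisections; once the displayed ``twisting'' identity above is in hand, the rest is a topological density argument using openness of $s$ and the unitary equivalence of orbit-related regular representations.
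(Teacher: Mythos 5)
Your argument is fine in the Hausdorff case: there the continuity of $\hat a|_{\Gu}$ is exactly the statement that the canonical conditional expectation maps $C^*_r(\G)$ into $C_0(\Gu)$, and your density argument then recovers (indeed slightly more than) the paper's proof. The gap is in the non-Hausdorff case, which is precisely the half of the lemma for which the countable cover by bisections is assumed. When $\G$ is not Hausdorff, elements of $C_c(\G)$ are in general not continuous functions on $\G$, and their restrictions to $\Gu$ are in general not continuous either (they can be discontinuous exactly at dangerous points). So the map $f\mapsto f|_{\Gu}$ does not take values in $C_0(\Gu)$, there is no contraction $C^*_r(\G)\to C_0(\Gu)$ extending it, and with that the continuity of $\hat{f^*a}|_{\Gu}$, hence of $\hat a|_W$, and the whole density argument collapse.

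Moreover, the conclusion you aim for, $\hat a\equiv 0$ on $\G$, is false in this setting and cannot be repaired: since $\rho_x(a)\delta_g$ is the right translate of $\rho_{r(g)}(a)\delta_{r(g)}$, the vanishing of $\hat a$ would force $\rho_x(a)=0$ for every $x$, i.e.\ $a=0$, so your plan would prove $\bigcap_{x\in X}\ker\rho_x=0$; but by \eqref{eq:sing2} this intersection equals $J_\sing$ for the dense set $X=D^c$, and $J_\sing$ can be nonzero. Concretely, let $\G$ be the group bundle over $\Gu=[0,1]$ with fibre $\Z/2\Z$ at $0$ and trivial fibres elsewhere, topologized so that the nontrivial element $g$ over $0$ is a limit of units; it is covered by the two open bisections $\Gu$ and $W_g:=(\Gu\setminus\{0\})\cup\{g\}$. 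Take $h\in C(\Gu)$ with $h(0)=1$ and let $f\in C_c(\G)$ be the difference of $h\in C_c(\Gu)$ and its copy on $W_g$ (via the homeomorphism $W_g\cong\Gu$). Then $f$ vanishes on $\Gu\setminus\{0\}$, so $\rho_x(f)=0$ for all $x\ne0$, i.e.\ $f\in\bigcap_{x\in X}\ker\rho_x$ for the dense set $X=(0,1]$, while $\hat f(g)=f(g)=-1$ and $f\ne0$ in $C^*_r(\G)$ (indeed $\rho_0(f)=1-\lambda_g$); note also that $f|_{\Gu}$ equals $1$ at $0$ and $0$ elsewhere, exhibiting the failure of your very first continuity claim. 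In the non-Hausdorff case one can only hope for meagerness, not emptiness, of $\{x:\rho_x(a)\delta_x\ne0\}$, and this is what the Baire-category argument of \cite{NS}*{Proposition~1.12}, which the paper invokes for this half of the lemma, extracts from the countable cover by open bisections.
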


\bp
When $\G$ can be covered by countably many open bisections, this follows from \cite{NS}*{Proposition~1.12} and its proof. In the Hausdorff case the canonical conditional expectation $E\colon C_{r}^{*}(\G)\to C_{0}(\Gu)$ is faithful and $E(a)(x)=(\rho_x(a)\delta_x,\delta_x)$ for all $x\in \Gu$. Therefore $E(a^*a)(x)=0$ for all $x\in X$ and $a\in \bigcap_{x\in X}\ker\rho_x$, so by density of~$X$ in~$\Gu$ we get~$\bigcap_{x\in X}\ker\rho_x=0$.
\ep

To describe when the inequality in Lemma~\ref{lem:countint} is an equality, recall from~\cite{MR4246403} that $x\in \Gu$ is called a \emph{dangerous point} if there exists a net in $\Gu$ which both converges to $x$ and to an element of $\Gxx\setminus\{x\}$. Let $D\subset \Gu$ denote the set of dangerous points. This set is empty if and only if $\G$ is Hausdorff. By~\cite{MR4246403}*{Lemma~7.15}, if $\G$ can be covered by countably many open bisections, then the set~$D$ is meager, and then by~\cite{NS}*{Proposition~1.12} we have
\begin{equation}\label{eq:sing2}
\bigcap_{x\in X}\ker\rho_x = J_\sing
\end{equation}
for any dense subset $X\subset D^c$. In fact, the set $D$ here can be replaced by a smaller set of \emph{extremely dangerous points}~\cite{NS}, which can be empty even for non-Hausdorff groupoids, but this is not going to be important for us.

\smallskip

As in~\cite{CN}, denote by $\vartheta_{x,r}\colon C^*_r(\G)\to C^*_r(\Gxx)$ the canonical contraction. It coincides with the composition of $\vartheta_{x,e}\colon C^*_r(\G)\to C^*_e(\Gxx)$ with the quotient map $C^*_e(\Gxx)\to C^*_r(\Gxx)$. For an ideal $I\subset C^*_r(\G)$ and $x\in\Gu$, put $I_{x,r}:=\vartheta_{x,r}(I)$. Thus, $I_{x,r}$ is a quotient of $I_x$, so it is an ideal in~$C^*_r(\Gxx)$.

\begin{lemma}\label{lem:Jsing}
Assume $\G$ is a locally compact \'etale groupoid such that either $\G$ is Hausdorff or~$\G$ can be covered by countably many open bisections. Assume $I\subset C^*_r(\G)$ is an ideal such that the set $X:=\{x\in\Gu\mid I_{x,r}=0\}$ is dense in $\Gu$. Then $I\subset J_\sing$.
\end{lemma}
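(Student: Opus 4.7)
The plan is to reduce the lemma to Lemma~\ref{lem:countint} by proving $I \subset \bigcap_{x \in X} \ker \rho_x$. Under the standing hypothesis on $\G$, Lemma~\ref{lem:countint} applied to the dense set $X$ then immediately gives $I \subset J_\sing$.

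Fix $x \in X$ and $a \in I$. To show $\rho_x(a) = 0$, it suffices to verify that every matrix coefficient $(\rho_x(a)\delta_g, \delta_{g'})$ with $g, g' \in \G_x$ vanishes. The key move is the standard bisection trick: for each $g \in \G_x$, choose an open bisection $W_g \ni g$ and $f_g \in C_c(W_g)$ with $f_g(g) = 1$; since $W_g \cap \G_x = \{g\}$, the defining formula for $\rho_x$ yields $\rho_x(f_g)\delta_x = \delta_g$, where $\delta_x \in \ell^2(\G_x)$ is the Dirac function at the unit. Pushing $f_g$ and $f_{g'}$ across the inner product then gives
$$
(\rho_x(a)\delta_g, \delta_{g'}) = (\rho_x(b)\delta_x, \delta_x),\qquad b := f_{g'}^{*}\, a\, f_g \in I.
$$

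The final step is to recognize $(\rho_x(b)\delta_x, \delta_x)$ as a matrix coefficient of $\vartheta_{x,r}(b)$. Applying Corollary~\ref{cor:contraction} to the faithful regular representation $\pi = \lambda_{\Gxx}$ of $C^*_r(\Gxx)$ and using the unitary equivalence $\Ind\lambda_{\Gxx} \sim \rho_x$, under which the coisometry $v$ becomes the restriction $\ell^2(\G_x)\to\ell^2(\Gxx)$, one obtains
$$
(\rho_x(b)\delta_x, \delta_x) = (\lambda_{\Gxx}(\vartheta_{x,r}(b))\delta_x, \delta_x).
$$
Since $b \in I$ and $\vartheta_{x,r}(I) = I_{x,r} = 0$ by the choice of $x \in X$, the right-hand side vanishes, hence $\rho_x(a) = 0$ as required. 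No serious obstacle is anticipated; the only routine checks are the bisection identity $\rho_x(f_g)\delta_x = \delta_g$ and the compatibility formula above, which for $b \in C_c(\G)$ both reduce to the tautology $b(x) = b(x)$ and extend to $C^*_r(\G)$ by continuity.
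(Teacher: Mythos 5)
Your argument is correct and follows essentially the same route as the paper: both reduce the lemma to showing $I\subset\ker\rho_x$ for every $x\in X$ and then invoke Lemma~\ref{lem:countint} for the dense set $X$. The only difference is that where the paper simply cites Corollary~\ref{cor:ind-primitive} with $J=\ker(C^*_e(\Gxx)\to C^*_r(\Gxx))$ (so that $\Ind J=\ker\rho_x$ via $\Ind\lambda_{\Gxx}\sim\rho_x$), you re-derive that containment by hand through the bisection and matrix-coefficient computation based on Corollary~\ref{cor:contraction}, which is precisely the mechanism behind Lemma~\ref{lem:induced-ideal}.
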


\bp
As $\Ind\lambda_{\Gxx}\sim\rho_x$, by Corollary~\ref{cor:ind-primitive} the condition $I_{x,r}=0$ means that $I\subset\ker\rho_x$, and hence it follows from Lemma \ref{lem:countint} that $I\subset J_\sing$.
\ep

The following theorem and corollary generalize \citelist{\cite{O}*{Theorem~14}\cite{Ka}*{Lemma~2.4(i)}\cite{KS}*{Proposition~6.6}(i)}. For the topologically principal groupoids this is also closely related to~\cite{MR3988622}*{Theorem~4.10(3)}.

\begin{thm}\label{thm:ideal-intersection}
Assume $\G$ is a locally compact \'etale groupoid such that either $\G$ is Hausdorff or~$\G$ can be covered by countably many open bisections. Assume that the set
$$
X:=\{x\in\Gu\mid \Gxx\ \text{is}\ C^*\text{-simple and}\ C^*_e(\Gxx)=C^*_r(\Gxx)\}
$$
is dense in $\Gu$. Then we have $I\subset J_\sing$ for any ideal $I\subset C^*_r(\G)$ with $U(I)=\emptyset$.
\end{thm}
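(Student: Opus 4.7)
The plan is to reduce to Lemma~\ref{lem:Jsing}: it suffices to show that the set $\{x\in\Gu\mid I_{x,r}=0\}$ contains the dense set $X$, since then Lemma~\ref{lem:Jsing} immediately yields $I\subset J_\sing$. So the whole proof boils down to checking that $I_{x,r}=0$ for every $x\in X$.

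Fix $x\in X$. Because $x\in X$ we have $C^*_e(\Gxx)=C^*_r(\Gxx)$, so under this identification $\vartheta_{x,r}$ and $\vartheta_{x,e}$ coincide, and in particular $I_{x,r}=I_x$ (here $I_x$ is the isotropy fiber in the sense of Section~3, taken with respect to the reduced norm on $C_c(\G)$, which by definition yields the exotic norm on $\C\Gxx$). Since the assumption $U(I)=\emptyset$ forces $x\in U(I)^c$, Proposition~\ref{prop:U(I)} tells us that $I_x$ is a \emph{proper} ideal of $C^*_r(\Gxx)$. But $\Gxx$ is $C^*$-simple, so $C^*_r(\Gxx)$ has no nontrivial proper ideals; hence $I_x=0$, and therefore $I_{x,r}=0$.

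Thus $X\subset\{x\in\Gu\mid I_{x,r}=0\}$, and by density of $X$ in $\Gu$ the latter set is dense as well. Applying Lemma~\ref{lem:Jsing} finishes the proof. There is no substantial obstacle here; the argument is a direct bookkeeping of what has already been established. The only point worth being careful about is to remember that Proposition~\ref{prop:U(I)} is stated for the isotropy fiber with respect to the \emph{given} norm $\|\cdot\|_\nu$, which for us is the reduced norm on $C_c(\G)$ and hence produces $C^*_e(\Gxx)$ rather than $C^*_r(\Gxx)$ at the isotropy group level; the hypothesis $C^*_e(\Gxx)=C^*_r(\Gxx)$ built into the definition of $X$ is exactly what is needed to bridge this gap.
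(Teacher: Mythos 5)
Your proposal is correct and follows essentially the same route as the paper: apply Proposition~\ref{prop:U(I)} (with $\nu$ the reduced norm, so the fibers live in $C^*_e(\Gxx)$) to see each $I_x$ is proper, use $C^*$-simplicity together with $C^*_e(\Gxx)=C^*_r(\Gxx)$ to conclude $I_x=0$ for $x\in X$, and then invoke Lemma~\ref{lem:Jsing}. Your explicit remark that the hypothesis $C^*_e(\Gxx)=C^*_r(\Gxx)$ is what bridges the exotic and reduced fibers is exactly the point the paper's brief proof leaves implicit.
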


\bp
Assume $I\subset C^*_r(\G)$ is an ideal such that $U(I)=\emptyset$. By Proposition~\ref{prop:U(I)} it follows that $I_x\subset C^*_e(\Gxx)$ is a proper ideal for all $x\in\Gu$. By our assumptions this implies that $I_x=0$ for all $x\in X$. Then $I\subset J_\sing$ by Lemma~\ref{lem:Jsing}.
\ep

The most unsatisfactory part of the assumptions of the theorem is of course the equality $C^*_e(\Gxx)=C^*_r(\Gxx)$. We remind, however, that by~\cite{CN} it is satisfied for the transformation-type groupoids $X\rtimes\Gamma$ defined by partial actions of discrete groups, as well as for the groupoids injectively graded by exact discrete groups. Recall that $\G$ is said to be injectively graded by $\Gamma$, if we are given a continuous homomorphism (or $1$-cocycle) $\Phi\colon\G\to\Gamma$ such that $\Phi|_{\Gxx}$ is injective for all $x\in\Gu$.

\begin{cor}\label{cor409}
Assume $\G$ is a minimal locally compact \'etale groupoid such that either $\G$ is Hausdorff or~$\G$ can be covered by countably many open bisections. Assume there exists a point $x\in\Gu$ such that $\Gxx$ is C$^*$-simple and $C^*_e(\Gxx)=C^*_r(\Gxx)$. Then $J_{\sing}=\ker \rho_{x}$ and $J_\sing$ is the largest proper ideal in $C^*_r(\G)$. In particular, $C_{\ess}^{*}(\G)$ is simple.
\end{cor}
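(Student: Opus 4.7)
The plan is as follows. Since $\G$ is minimal, the orbit $[x]$ is dense in $\Gu$. For each $y = r(\gamma)$ with $\gamma \in \G_x$, the group isomorphism $\Ad\gamma\colon\G^y_y \to \Gxx$ extends to isomorphisms of both the reduced and exotic completions (Lemma~\ref{lem:invariance} handles the latter, the former being automatic from standard group C$^*$-algebra functoriality). Consequently $\G^y_y$ is C$^*$-simple and $C^*_e(\G^y_y) = C^*_r(\G^y_y)$ for every $y \in [x]$, so the set $X$ in Theorem~\ref{thm:ideal-intersection} contains $[x]$ and is therefore dense in~$\Gu$.

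Next, by minimality, the open invariant set $U(I)$ attached to any proper ideal $I \subset C^*_r(\G)$ must be empty (otherwise $U(I) = \Gu$, making $I$ nonproper). Theorem~\ref{thm:ideal-intersection} then gives $I \subset J_\sing$ for every proper ideal $I$. The ideal $J_\sing$ itself is proper: for any nonzero $f \in C_c(\Gu)$ one has $\rho_y(f)\delta_y = f(y)\delta_y$, so $\{y : \rho_y(f)\delta_y \ne 0\} = \{y : f(y) \ne 0\}$ is a nonempty open set in $\Gu$, hence non-meager by the Baire category theorem. Combined, $J_\sing$ is the largest (and therefore the unique maximal) proper ideal in $C^*_r(\G)$, whence the quotient $C^*_\ess(\G) = C^*_r(\G)/J_\sing$ is simple.

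It remains to identify $J_\sing$ with $\ker \rho_x$. The representation $\rho_x$ is nonzero (take $f \in C_c(\Gu)$ with $f(x) \ne 0$), so $\ker \rho_x$ is a proper ideal and therefore $\ker \rho_x \subset J_\sing$ by the previous paragraph. For the reverse inclusion I would show that $\ker \rho_x$ is itself maximal, so equality follows from uniqueness of the largest proper ideal. Suppose a proper ideal $I$ strictly contains~$\ker \rho_x$. From $\rho_x \sim \Ind \lambda_{\Gxx}$ together with $C^*_e(\Gxx) = C^*_r(\Gxx)$ and C$^*$-simplicity of~$\Gxx$, the kernel $\ker \rho_x$ equals $\Ind(0)$, the induced ideal of the zero ideal of $C^*_e(\Gxx)$. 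Since $I \not\subset \Ind(0)$, Corollary~\ref{cor:ind-primitive} forces $I_x \ne 0$, and simplicity of $C^*_e(\Gxx)$ gives $I_x = C^*_e(\Gxx)$; Proposition~\ref{prop:U(I)} then puts $x \in U(I)$, and minimality yields $U(I) = \Gu$ and the contradiction $I = C^*_r(\G)$.

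The main conceptual point, and the only step requiring care, is the use of the hypothesis $C^*_e(\Gxx) = C^*_r(\Gxx)$ to pass from simplicity of $C^*_r(\Gxx)$ to simplicity of $C^*_e(\Gxx)$, which is what validates $\ker\rho_x = \Ind(0)$. Without that hypothesis one would only recover $\ker\rho_x = \Ind\bigl(\ker(C^*_e(\Gxx) \to C^*_r(\Gxx))\bigr)$, which is a priori too large for the maximality argument to go through.
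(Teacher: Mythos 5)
Your proof is correct, and it reaches the statement by a mildly different organization than the paper. The paper works entirely at the fixed point $x$: for any proper ideal $I$, minimality gives $U(I)=\emptyset$, then Proposition~\ref{prop:U(I)} together with simplicity of $C^*_e(\Gxx)=C^*_r(\Gxx)$ gives $I_x=0$, hence $I\subset\ker\rho_x$ by Corollary~\ref{cor:ind-primitive}, and finally $\ker\rho_x\subset J_\sing$ follows from Lemma~\ref{lem:countint} using $\rho_y\sim\rho_x$ for all $y\in[x]$; applying this chain to $I=J_\sing$ yields both conclusions at once. You instead transport the hypotheses along the dense orbit (via Lemma~\ref{lem:invariance} and functoriality of the reduced group norm) so as to quote Theorem~\ref{thm:ideal-intersection} as a black box for ``every proper ideal lies in $J_\sing$'' --- at the cost of also checking properness of $J_\sing$ by a Baire category argument --- and then you identify $J_\sing$ with $\ker\rho_x$ by showing that $\ker\rho_x=\Ind(0)$ admits no proper ideal strictly above it; that maximality argument is essentially the contrapositive of the paper's step $I_x=0\Rightarrow I\subset\ker\rho_x$. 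Both routes rest on the same ingredients (Proposition~\ref{prop:U(I)}, Corollary~\ref{cor:ind-primitive}, minimality, and Lemma~\ref{lem:countint}, the latter hidden inside Theorem~\ref{thm:ideal-intersection} through Lemma~\ref{lem:Jsing}); the paper's version is slightly more economical, since using $\rho_y\sim\rho_x$ directly avoids having to verify C$^*$-simplicity and $C^*_e(\G^y_y)=C^*_r(\G^y_y)$ along the orbit, while your version makes the properness of $J_\sing$ and the maximality of $\ker\rho_x$ explicit. One small remark: C$^*$-simplicity is not needed for the identification $\ker\rho_x=\Ind(0)$; the equality $C^*_e(\Gxx)=C^*_r(\Gxx)$ alone makes $\lambda_{\Gxx}$ faithful on $C^*_e(\Gxx)$, exactly as your closing paragraph observes.
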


\bp
By the minimality assumption we have $U(I)=\emptyset$ for every proper ideal $I\subset C^*_r(\G)$. Since $C^*_e(\Gxx)$ is simple, Proposition~\ref{prop:U(I)} and Corollary~\ref{cor:ind-primitive}
implies that $I\subset\ker\rho_x$. As~$[x]$ is dense in $\Gu$ and $\rho_y\sim\rho_x$ for all $y\in[x]$ we get by Lemma~\ref{lem:countint} that $I\subset\ker\rho_x\subset J_\sing$. Since this also applies to $I=J_\sing$, it follows that $J_\sing=\ker\rho_x$ and $J_\sing$ is the largest proper ideal in $C^*_r(\G)$.
\ep

This corollary leads to the following question that we have been unable to answer.

\begin{question} \label{que:denseexo}
Is there a groupoid $\G$ with a unit $x\in\Gu$ such that $[x]$ is dense in~$\Gu$ and $C^*_e(\Gxx)\ne C^*_r(\Gxx)$? Is there a minimal and Hausdorff groupoid with this property?
\end{question}

\begin{remark}\label{rem:Renault}
One can use Theorem \ref{thm:ideal-intersection} to provide another proof of \cite{Rbook}*{Proposition~II.4.6}. The statement of this proposition misses one assumption and in the correct form should read as follows: if $\G$ is an inner exact Hausdorff locally compact \'etale groupoid, with the property that for any closed invariant set $X\subset \Gu$ the subset of points in $X$ with trivial isotropy is dense in~$X$, then the map $U\mapsto C_{r}^{*}(\G_{U})$ is a bijection from the open invariant subsets of $\Gu$ onto the ideals of~$C_{r}^{*}(\G)$. To see this, notice that if $I\subset C_{r}^{*}(\G)$ is a proper ideal, then $C^*_r(\G_{U(I)})\subset I$ and by inner exactness we get an ideal $J\cong I/C^*_r(\G_{U(I)})$ in $C_{r}^{*}(\G_{U(I)^{c}})$ such that $U(J)=\emptyset$. Then $J=0$ by Theorem \ref{thm:ideal-intersection}, hence $I=C^*_r(\G_{U(I)})$.\ee
\end{remark}

Turning to other open subgroupoids, we start with the following technical result, which generalizes Proposition~\ref{prop:U(I)} and is of independent interest.

\begin{prop}\label{prop:intersection}
Assume $\G$ is a locally compact \'etale groupoid, $C^*_\nu(\G)$ is a groupoid C$^*$-algebra, $\HH\subset\G$ is an open subgroupoid and $C^*_\nu(\HH)$ is the closure of $C_c(\HH)$ in $C^*_\nu(\G)$. Assume $I\subset C^*_\nu(\G)$ is an ideal and let $J:=C^*_\nu(\HH)\cap I$. Then $J_x=C^*_\nu(\HH^x_x)\cap I_x$ for all $x\in\HH^{(0)}$.
\end{prop}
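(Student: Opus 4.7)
The inclusion $J_x\subseteq C^*_\nu(\HH^x_x)\cap I_x$ is immediate from the fact that $\vartheta^{\HH}_{x,\nu}$ is the restriction of $\vartheta_{x,\nu}$ to $C^*_\nu(\HH)$ (followed by the inclusion $C^*_\nu(\HH^x_x)\hookrightarrow C^*_\nu(\Gxx)$): both maps extend $\eta_x|_{C_c(\HH)}$, and the norms on $\C\HH^x_x$ induced from $C^*_\nu(\HH)$ and from $C^*_\nu(\G)$ coincide by the remark after Corollary~\ref{cor:exact}.

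For the nontrivial inclusion the plan is as follows. Given $a\in C^*_\nu(\HH^x_x)\cap I_x$, produce two approximants of $a$, one ``from $I$'' and one ``from $C^*_\nu(\HH)$'', show that they become close in $C^*_\nu(\G)$-norm using that their difference lies in $\ker\vartheta_{x,\nu}$, and convert proximity-to-$I$ into proximity-to-$J$ via the isometric embedding $C^*_\nu(\HH)/J\hookrightarrow C^*_\nu(\G)/I$ (an injective $*$-homomorphism between C$^*$-algebras is isometric). Concretely, pick $b\in I$ with $\vartheta_{x,\nu}(b)=a$, and use Corollary~\ref{cor:surj} applied to $\HH$ to pick $b'\in C^*_\nu(\HH)$ with $\vartheta^{\HH}_{x,\nu}(b')=a$; then $b-b'\in\ker\vartheta_{x,\nu}$. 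Since $\HH^{(0)}$ is open in $\Gu$, we may take the neighbourhood base $\V$ of $x$ inside $\HH^{(0)}$, so that the cut-off functions $q_V$ of Theorem~\ref{thmnorm} lie in $C_c(\HH^{(0)})\subset C^*_\nu(\HH)$ with $q_V(x)=1$.

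The core estimate is $\|q_V(b-b')q_V\|_\nu\to 0$ along $\V$. Approximate $b-b'$ by $f_n\in C_c(\G\setminus\Gxx)$ (Corollary~\ref{cor:exact}); since $\eta_x(f_n)=0$, Theorem~\ref{thmnorm} yields $\lim_V\|q_Vf_nq_V\|_\nu=0$, and a routine triangle inequality using $\|q_V\|_\nu\le 1$ gives $\limsup_V\|q_V(b-b')q_V\|_\nu\le\|b-b'-f_n\|_\nu\to 0$. Consequently $q_Vbq_V\in I$ and $q_Vb'q_V\in C^*_\nu(\HH)$ satisfy $\|q_Vbq_V-q_Vb'q_V\|_\nu\to 0$, so $\mathrm{dist}(q_Vb'q_V,I)\to 0$; by the isometric embedding $C^*_\nu(\HH)/J\hookrightarrow C^*_\nu(\G)/I$ this upgrades to $\mathrm{dist}(q_Vb'q_V,J)\to 0$. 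Choosing $c_V\in J$ with $\|q_Vb'q_V-c_V\|_\nu\to 0$, and using that $q_V$ lies in the multiplicative domain of $\vartheta^{\HH}_{x,\nu}$ by Lemma~\ref{lem:multdomain} so that $\vartheta^{\HH}_{x,\nu}(q_Vb'q_V)=q_V(x)^2 a=a$, we deduce $\vartheta^{\HH}_{x,\nu}(c_V)\to a$. Since $\vartheta^{\HH}_{x,\nu}(c_V)\in J_x$ and $J_x$ is closed, $a\in J_x$.

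The main obstacle is that an element $b\in I$ need not sit inside $C^*_\nu(\HH)$, and one cannot cleanly ``truncate'' it to $\HH$ when the boundary of $\HH$ meets $\supp b$. The two-approximant device above---one representative $b$ kept in $I$ to track membership in the ideal, a separate representative $b'$ kept in $C^*_\nu(\HH)$ to track membership in the subalgebra, with the $\ker\vartheta_{x,\nu}$ information forcing $q_V$-compressed versions together---is what bridges this gap, and the passage from ``close to $I$'' to ``close to $J$'' uses nothing more than the standard fact that a C$^*$-subalgebra of a quotient is itself a quotient of a C$^*$-subalgebra.
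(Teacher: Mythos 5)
Your proof is correct, but it takes a genuinely different route from the paper's. The paper argues dually, with states: given a state $\varphi$ on $C^*_\nu(\Gxx)$ vanishing on $J_x$, it transports $\varphi\circ\vartheta_{x,\nu}$ through the isomorphism $(I+C^*_\nu(\HH))/I\cong C^*_\nu(\HH)/J$, extends the resulting state to all of $C^*_\nu(\G)$, and uses the factorization criterion of Proposition~\ref{prop:pointeval} (together with uniqueness of the extension of $\ev_x$ from $C_0(\HH^{(0)})$ to $C_0(\Gu)$) to write the extension as $\varphi'\circ\vartheta_{x,\nu}$ with $\varphi'=\varphi$ on $C^*_\nu(\HH^x_x)$; since the extension kills $I$, the state $\varphi$ kills $C^*_\nu(\HH^x_x)\cap I_x$, and separation by such states finishes. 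You instead work primally with elements: two preimages $b\in I$ and $b'\in C^*_\nu(\HH)$ of $a$, the cut-off functions $q_V$ of Theorem~\ref{thmnorm} chosen inside the open set $\HH^{(0)}$, the density of $C_c(\G\setminus\Gxx)$ in $\ker\vartheta_{x,\nu}$ (Corollary~\ref{cor:exact}) to force $\|q_V(b-b')q_V\|_\nu\to0$, and the same C$^*$-isomorphism $C^*_\nu(\HH)/J\cong(I+C^*_\nu(\HH))/I$, now exploited as an isometry to convert distance to $I$ into distance to $J$, plus closedness of the fiber $J_x$. Both arguments ultimately rest on~\eqref{eq:e-norm-limit}: the paper reaches it through Proposition~\ref{prop:pointeval}, which avoids choosing approximants and is shorter; your version bypasses the state-extension machinery at the cost of invoking Corollary~\ref{cor:exact} and the completeness of isotropy fibers. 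The auxiliary points you rely on are all in order: $\vartheta_{x,\nu}|_{C^*_\nu(\HH)}$ agrees with $\vartheta^{\HH}_{x,\nu}$ because both extend $\eta_x|_{C_c(\HH)}$ and the norm on $\C\HH^x_x$ coming from $C^*_\nu(\HH)$ is the restricted one, surjectivity of $\vartheta^{\HH}_{x,\nu}$ follows from Corollary~\ref{cor:surj} applied to $\HH$, and $q_V$ lies in the multiplicative domain by Lemma~\ref{lem:multdomain} since $\HH^{(0)}$ is an open bisection, so $\vartheta^{\HH}_{x,\nu}(q_Vb'q_V)=a$ as you claim.
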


Recall that by a discussion at the end of Section~\ref{ssec:completions}, the norm on~$\C\HH^x_x$ is the restriction of the norm on~$ \C\Gxx$, so that we can view $C^*_\nu(\HH^x_x)$ as a subalgebra of~$C^*_\nu(\Gxx)$.

\bp Fix a point $x\in\HH^{(0)}$. We obviously have $J_x\subset C^*_\nu(\HH^x_x)\cap I_x$. In order to prove the equality it suffices to show that every state $\varphi$ on $C^*_\nu(\Gxx)$ that vanishes on~$J_x$ must vanish on~$C^*_\nu(\HH^x_x)\cap I_x$. Fix such a state $\varphi$ and consider the state $\psi:=\varphi\circ\vartheta_{x,\nu}$. Its restriction to~$C^*_\nu(\HH)$ vanishes on~~$J$ and can therefore be considered as a state on $C^*_\nu(\HH)/J$. Define a state on the C$^*$-subalgebra $I+C^*_\nu(\HH)\subset C^*_\nu(\G)$ as the composition
$$
I+C^*_\nu(\HH)\to (I+C^*_\nu(\HH))/I\cong C^*_\nu(\HH)/J\xrightarrow{\psi}\C,
$$
and then extend it to a state $\psi'$ on $C^*_\nu(\G)$. Since the state $\ev_x$ on $C_0(\Gu)$ is the unique extension of $\ev_x|_{C_0(\HH^{(0)})}$, by Proposition~\ref{prop:pointeval} we have $\psi'=\varphi'\circ\vartheta_{x,\nu}$ for a state $\varphi'$ on $C^*_\nu(\Gxx)$. But then $\varphi'=\varphi$ on $C^*_\nu(\HH^x_x)$.  As $\psi'$ vanishes on $I$ by construction, the state $\varphi'$ vanishes on $I_x$ and hence~$\varphi$ vanishes on $C^*_\nu(\HH^x_x)\cap I_x$.
\ep

The following result is a generalization of \citelist{\cite{BNRSW}*{Theorem~3.1(b)} \cite{S}*{Theorem~2.1(ii)}}, which are in turn inspired by a \emph{uniqueness theorem} for $k$-graph C$^*$-algebras.

\begin{thm}
Assume $\G$ is a locally compact \'etale groupoid such that either $\G$ is Hausdorff or~$\G$ can be covered by countably many open bisections. Assume $\HH\subset\G$ is an open subgroupoid such that the set
$
X:=\{x\in\HH^{(0)}\mid \HH^x_x=\Gxx\}
$
is dense in $\Gu$. Then we have $I\subset J_\sing$ for any ideal $I\subset C^*_r(\G)$ such that $C^*_r(\HH)\cap I=0$.
\end{thm}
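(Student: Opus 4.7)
The plan is to combine Proposition \ref{prop:intersection} (which localizes the hypothesis $C^*_r(\HH)\cap I=0$ to each isotropy fiber) with Lemma \ref{lem:Jsing} (which converts vanishing of reduced isotropy fibers on a dense set into containment in the singular ideal). Set $J:=C^*_r(\HH)\cap I$, which is zero by assumption. Applying Proposition \ref{prop:intersection} with $\nu=r$ then yields
$$
C^*_r(\HH^x_x)\cap I_x \;=\; J_x \;=\; 0 \qquad\text{for every } x\in\HH^{(0)},
$$
where the intersection takes place inside $C^*_e(\Gxx)$, using the identification of norms discussed at the end of Section \ref{ssec:completions}: the norm on $\C\HH^x_x$ inherited from $C^*_r(\HH)\subset C^*_r(\G)$ is the restriction of the norm on $\C\Gxx$ inherited from $C^*_r(\G)$.

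Next I would exploit the hypothesis $\HH^x_x=\Gxx$ for $x\in X$. For such $x$ the subalgebra $C^*_r(\HH^x_x)$, formed inside $C^*_e(\Gxx)$ with the norm just mentioned, is all of $C^*_e(\Gxx)$, so the intersection identity collapses to $I_x=0$. Composing with the canonical surjection $C^*_e(\Gxx)\to C^*_r(\Gxx)$ gives $I_{x,r}=0$ for every $x\in X$.

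Since $X$ is dense in $\Gu$ by hypothesis, Lemma \ref{lem:Jsing} applies and delivers $I\subset J_\sing$, completing the argument. The only mildly subtle point is the norm identification in the first paragraph; once one accepts it, everything else is formal and there is no genuine obstacle. In particular, no new groupoid-theoretic input is required beyond the two results already established.
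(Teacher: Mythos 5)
Your proposal is correct and follows essentially the same route as the paper: apply Proposition~\ref{prop:intersection} with $\nu=r$ to get $C^*_\nu(\HH^x_x)\cap I_x=0$, note that for $x\in X$ the hypothesis $\HH^x_x=\Gxx$ together with the norm identification from the end of Section~\ref{ssec:completions} collapses this to $I_x=0$ (hence $I_{x,r}=0$), and conclude by Lemma~\ref{lem:Jsing}. The only quibble is notational: what you call $C^*_r(\HH^x_x)$ is really the completion of $\C\HH^x_x$ in the (possibly exotic) norm inherited from $C^*_r(\G)$, but since you state this identification explicitly and use it correctly, the argument is sound.
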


\bp
Assume $I\subset C^*_r(\G)$ is such that $C^*_r(\HH)\cap I=0$. By the previous proposition we then get $I_x=0$ for all $x\in X$. By Lemma~\ref{lem:Jsing} we conclude that $I\subset J_\sing$.
\ep

The following corollary generalizes~\cite{MR4043708}*{Theorem~7.7}.

\begin{cor}
Assume $\G$ is a minimal Hausdorff locally compact \'etale groupoid and $\HH\subset\G$ is an open subgroupoid such that its isotropy bundle is normal in $\G$. Assume there exist points $x,y\in\Gu$ such that $\HH^x_x=\Gxx$, $\HH^y_y$ is C$^*$-simple and $C^*_e(\HH^y_y)=C^*_r(\HH^y_y)$. Then $C_r^{*}(\G)$ is simple.
\end{cor}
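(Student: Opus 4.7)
The plan is to combine the preceding open-subgroupoid intersection theorem with a Corollary~\ref{cor409}-style argument at the C$^*$-simple point~$y$, and to proceed by contradiction. I would first verify the hypotheses of the preceding theorem with the given $\HH$. Normality of the isotropy bundle of $\HH$ in $\G$ forces $\HH^{(0)}$ to be $\G$-invariant (conjugating a unit $s(\gamma)\in\HH^{(0)}$ by $\gamma\in\G$ must yield a unit in $\HH^{(0)}$), so by minimality $\HH^{(0)}=\Gu$. Using $\HH^x_x=\Gxx$ and normality, for every $\gamma\in\G$ with $s(\gamma)=x$ one has $\HH^{r(\gamma)}_{r(\gamma)}=\gamma\Gxx\gamma^{-1}=\G^{r(\gamma)}_{r(\gamma)}$, so the set $\{z\in\Gu:\HH^z_z=\Gzz\}$ contains the dense orbit $[x]$. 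Since $\G$ is Hausdorff and $J_\sing=0$, the preceding theorem then implies that every nonzero ideal of $C^*_r(\G)$ meets $C^*_r(\HH)$ nontrivially.

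Suppose then for contradiction that $I\subset C^*_r(\G)$ is a nonzero proper ideal and set $J:=C^*_r(\HH)\cap I\ne 0$. Minimality and properness give $U(I)=\emptyset$, so by Proposition~\ref{prop:U(I)} the fiber $I_y$ is a proper ideal in $C^*_e(\Gyy)$. Proposition~\ref{prop:intersection} identifies $J_y$ with $C^*_\nu(\HH^y_y)\cap I_y$, where the closing paragraph of Section~\ref{ssec:completions} shows that the $C^*_\nu(\HH^y_y)$ appearing here is precisely $C^*_e(\HH^y_y)$ (the completion intrinsic to $\HH$); by assumption this equals the simple unital algebra $C^*_r(\HH^y_y)$. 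Since $1\notin I_y$, the ideal $J_y$ must vanish.

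The final step is to spread $J_y=0$ along the orbit $[y]$. For any $\gamma\in\G$ with $s(\gamma)=y$ and $r(\gamma)=z$, Lemma~\ref{lem:invariance} gives $\Psi_{\gamma,r}(I_y)=I_z$, and normality ensures that $\Psi_{\gamma,r}$ restricts to an isomorphism $C^*_e(\HH^y_y)\to C^*_e(\HH^z_z)$; consequently $J_z=0$ and $C^*_e(\HH^z_z)=C^*_r(\HH^z_z)$ for every $z\in[y]$. The latter equality makes $\lambda_{\HH^z_z}$ faithful on $C^*_e(\HH^z_z)$, so $\Ind^\HH(0)=\ker\rho^\HH_z$, and Corollary~\ref{cor:ind-primitive} applied inside $C^*_r(\HH)$ forces $J\subset\ker\rho^\HH_z$ for every $z$ in the dense orbit $[y]$. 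Lemma~\ref{lem:countint} applied to the Hausdorff groupoid $\HH$ then gives $J\subset J_\sing(\HH)=0$, the desired contradiction. The delicate point I expect is the bookkeeping of the various exotic and reduced norms on the isotropy groups $\HH^z_z$ inherited from $\G$ versus intrinsically from $\HH$; once that is in order the argument is essentially formal.
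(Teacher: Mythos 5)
Your argument is correct and follows essentially the same route as the paper: use normality and density of $[x]$ to invoke the preceding uniqueness theorem, then show $J=C^*_r(\HH)\cap I$ has zero fiber at $y$ via Proposition~\ref{prop:U(I)}, Proposition~\ref{prop:intersection} and simplicity, propagate $J_z=0$ along the dense orbit $[y]$ using Lemma~\ref{lem:invariance} together with normality, and conclude $J=0$. The only deviation is at the last step, where the paper simply cites Lemma~\ref{lem:Jsing}, while you unpack it through Corollary~\ref{cor:ind-primitive} and Lemma~\ref{lem:countint} and insert an extra (correct but unnecessary) verification that $C^*_e(\HH^z_z)=C^*_r(\HH^z_z)$ for all $z\in[y]$; since $J_z=0$ already gives $J_{z,r}=0$, that check can be skipped.
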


The normality assumption here means that $\HH^{(0)}=\Gu$ and $\gamma\HH^{s(\gamma)}_{s(\gamma)}\gamma^{-1}=\HH^{r(\gamma)}_{r(\gamma)}$ for all~$\gamma\in\G$.

\bp
Consider the set $X:=\{z\in\Gu\mid \HH^z_z=\G^z_z\}$. As $\HH^x_x=\Gxx$, this set contains the $\G$-orbit of~$x$ by normality of the isotropy bundle of~$\HH$. Since this orbit is dense in $\Gu$ by minimality of~$\G$, by the previous theorem it is therefore enough to show that if $I\subset C^*_r(\G)$ is a proper ideal then $C^*_r(\HH)\cap I=0$.

Consider the ideal $J:=C^*_r(\HH)\cap I\subset C^*_r(\HH)$. As $I$ is a proper ideal, $U(I)=\emptyset$ by minimality of $\G$. By Proposition~\ref{prop:U(I)} it follows that $I_z\subset C^*_e(\G^z_z)$ is a proper ideal for all $z\in\Gu$. Hence $J_z\subset C^*_e(\HH^z_z)$ is a proper ideal for all $z$. By our assumptions it follows that $J_y=0$.

Observe next that if $z\in\Gu$ and $\gamma\in\G_z$, then by Proposition~\ref{prop:intersection} and Lemma~\ref{lem:invariance} we have
$$
J_{r(\gamma)}=C^*_e(\HH^{r(\gamma)}_{r(\gamma)})\cap I_{r(\gamma)}=\Psi_{\gamma,e}(C^*_e(\HH^z_z)\cap I_z)=\Psi_{\gamma,e}(J_z).
$$
It follows that $J_z=0$ for all $z$ in the $\G$-orbit of $y$, which is dense in $\Gu$ by minimality of $\G$. By Lemma~\ref{lem:Jsing} we conclude that $J=0$.
\ep

\bigskip

\section{Exotic norms and fibers of the singular ideal} \label{sec:exoticfiber}

It is an important problem to understand when  $C_{e}^{*}(\Gxx)\ne C_{r}^{*}(\Gxx)$. The goal of this section is to prove the following theorem, which generalizes~\cite{MR4043708}*{Theorem~4.12} and shows that, at least in the Hausdorff case, this does not happen too often.

\begin{thm} \label{thm:singfiber}
Assume $\G$ is a locally compact \'etale groupoid that can be covered by countably many open bisections. Then the set
$$
\{x\in\Gu\mid J_{\sing,x}=\ker(C^*_e(\Gxx)\to C^*_r(\Gxx))\}
$$
is residual in $\Gu$. In particular, if $J_\sing=0$, then the set $\{x\in\Gu\mid C^*_e(\Gxx)=C^*_r(\Gxx)\}$ is residual.
\end{thm}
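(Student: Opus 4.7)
Denote by $\pi_x\colon C^*_e(\Gxx)\to C^*_r(\Gxx)$ the canonical surjection and set $K_x:=\ker\pi_x$. Since $\vartheta_{x,r}=\pi_x\circ\vartheta_{x,e}$ and Corollary~\ref{cor:ind-primitive} applied to the zero ideal of $C^*_r(\Gxx)$ yields $\ker\rho_x=\ker\vartheta_{x,r}$, surjectivity of $\vartheta_{x,e}$ gives $\vartheta_{x,e}(\ker\rho_x)=K_x$. My plan is to prove the two inclusions $J_{\sing,x}\subset K_x$ and $K_x\subset J_{\sing,x}$ on residual sets and intersect; the ``In particular'' statement then follows by specializing to $J_\sing=0$, in which case $J_{\sing,x}=0$ and the assertion collapses to $K_x=0$ on a residual set.

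For the first inclusion, note that $J_{\sing,x}\subset K_x$ is equivalent to $\vartheta_{x,r}(J_\sing)=0$, i.e., to $J_\sing\subset\ker\rho_x$. The hypothesis that $\G$ is covered by countably many open bisections yields a countable norm-dense subset $\{a_n\}\subset J_\sing$; by the alternative description of $J_\sing$ in \cite{NS}*{Proposition~1.12}, each set $M_n:=\{x\in\Gu:\rho_x(a_n)\ne 0\}$ is meager, so on the residual complement of $M:=\bigcup_n M_n$ we have $\rho_x(a_n)=0$ for every $n$, hence $J_\sing\subset\ker\rho_x$ by continuity of $\rho_x$.

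For the reverse inclusion $K_x\subset J_{\sing,x}$, by surjectivity of $\vartheta_{x,e}$ and Corollary~\ref{cor:exact} this amounts to the density statement $\ker\rho_x\subset J_\sing+\overline{C_c(\G\setminus\Gxx)}$. My approach is to pick, for each $b\in K_x$, a sequence $b_k\in\C\Gxx$ with $b_k\to b$ in $\|\cdot\|_e$ and $\|b_k\|_r\to 0$; using Theorem~\ref{thmnorm} and Lemma~\ref{lem:multdomain} each $b_k$ can be lifted to $f_k\in C_c(\G)$ in the multiplicative domain of $\vartheta_{x,e}$ with $\eta_x(f_k)=b_k$ and $\|f_k\|_r$ arbitrarily close to $\|b_k\|_e$, so that $(f_k)$ is Cauchy with limit $a$ satisfying $\vartheta_{x,e}(a)=b$ and $\rho_x(a)=0$. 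The plan is then to show, for $x$ in a further residual set, that $a$ (possibly after modification by an element of $\overline{C_c(\G\setminus\Gxx)}$) lies in $J_\sing$, via a Baire category argument based on a countable dense subset $\{c_m\}$ of $C^*_r(\G)$: for each $m$ the set of $x$ at which $\rho_x(c_m)=0$ but $c_m\notin J_\sing+\overline{C_c(\G\setminus\Gxx)}$ should be meager, and the intersection of the residual complements (together with the residual set from the first inclusion) then yields a residual set on which the reverse inclusion holds for every $b\in K_x$.

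The main obstacle is precisely this reverse inclusion. Although $\rho_x(f_k)\to 0$ by construction, the operators $\rho_y(f_k)$ at points $y\ne x$ generally do not tend to zero --- indeed, even for a lift of a single group element $g\in\Gxx$ by a bisection function of shrinking support, the norm of the restriction of $\rho_x$ to $\ell^2(\G_x\setminus\Gxx)$ remains close to $1$, so the naive limit $a$ is typically not singular. Overcoming this requires simultaneously controlling uncountably many evaluations $\rho_y$ using only countably many parameters; I would imitate the strategy of the proof of \cite{MR4043708}*{Theorem~4.12} (which treats the reduced case), combined with the norm formula~\eqref{eq:e-norm-limit} to transfer information between $\Gxx$ and the ambient groupoid in the exotic setting.
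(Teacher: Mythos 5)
The crux of the theorem is the inclusion $\ker(C^*_e(\Gxx)\to C^*_r(\Gxx))\subset J_{\sing,x}$ on a residual set, and your proposal does not prove it: you outline a lifting scheme, observe yourself that the lift $a\in\ker\rho_x$ of an element $b\in K_x$ is typically not singular, and then defer the actual argument to an unspecified imitation of \cite{MR4043708}*{Theorem~4.12}, which only covers the second countable Hausdorff case. Moreover, two of the steps you do write down are unjustified. First, being covered by countably many open bisections does not make $C^*_r(\G)$ or $J_\sing$ separable (the bisections need not be second countable), so neither the countable dense subset of $J_\sing$ in your first inclusion nor the countable dense set $\{c_m\}\subset C^*_r(\G)$ driving your Baire argument exists in general. (The first inclusion is in fact cheaper: by \eqref{eq:sing2} we have $J_\sing\subset\ker\rho_x$ for every non-dangerous $x$, and the set $D$ of dangerous points is meager by \cite{MR4246403}*{Lemma~7.15}, so $J_{\sing,x}\subset K_x$ off a meager set with no separability needed.) Second, the identity $\ker\rho_x=\ker\vartheta_{x,r}$ is false: compressing $\rho_x$ to $\ell^2(\Gxx)$ gives only $\ker\rho_x\subset\ker\vartheta_{x,r}$, and the latter is in general strictly larger (any $f\in C_c(\G\setminus\Gxx)$ lies in $\ker\vartheta_{x,r}$ but usually not in $\ker\rho_x$); what you actually need is $\Ind K_x=\ker\rho_x$, which follows from $\rho_x\sim\Ind\lambda_{\Gxx}$.

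The paper's proof supplies exactly the missing mechanism, and it is structurally different from your per-element lifting. It first reduces to group bundles by passing to $\Iso$, using Lemma~\ref{lem:iso-points} (the set where $\Iso_x=\Gxx$ is residual) together with Proposition~\ref{prop:intersection} to identify the fibers of the singular ideal of $C^*_r(\Iso)$ --- a reduction absent from your proposal. For a group bundle, $0\to J_\sing\to C^*_r(\G)\to C^*_\ess(\G)\to 0$ is a sequence of $C_0(\Gu)$-algebras, hence exact fiberwise, and comparing $J_{\sing,x}$ with $K_x$ becomes comparing the functions $m_a(x)=\|\rho_x(a)\|$ (lower semicontinuous, suitably defined on $D$) and $M_a(x)=\|\pi(a)_x\|$ (upper semicontinuous), which satisfy $m_a\le M_a$ with equal suprema over every open set (Lemma~\ref{lem:maMa}); hence $\{m_a=M_a\}$ is residual. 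The countability hypothesis is then used not for separability of the algebra but to produce countably many test elements $f=\sum_{k\in F}\alpha_k\un_{W_k}$ with $\alpha_k\in\mathbb Q+i\mathbb Q$ from the given cover by bisections, whose restrictions span the group algebra of $\Gxx$ over $\mathbb Q+i\mathbb Q$ in every fiber; on the residual intersection of the sets $\{m_f=M_f\}\setminus D$ one gets $\|\vartheta_{x,r}(f*h)\|=\|\vartheta_{x,e}(f*h)+J_{\sing,x}\|$ on a dense subalgebra of $C^*_e(\Gxx)$, which yields $K_x=J_{\sing,x}$. This is precisely the device controlling uncountably many evaluations $\rho_y$ by countably many parameters that your sketch calls for; without it, or a genuine substitute, your argument stops short of the theorem.
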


As we have already observed, $J_\sing=0$ if $\G$ is Hausdorff. Let us remark that $J_\sing$ is often zero even for non-Hausdorff groupoids. Some sufficient conditions for this are given in~\cite{MR3988622}*{Lemma~4.11} and \cite{NS}*{Proposition~1.12}.

\smallskip

We will first reduce the proof of the theorem to the case of group bundles. Denote by $\IsoG$ the isotropy bundle $\{g\in\G:s(g)=r(g)\}$ of $\G$. Its interior $\Iso$ is an open subgroupoid of $\G$ with unit space $\Gu$.

\begin{lemma}\label{lem:iso-points}
For any locally compact \'etale groupoid $\G$ that can be covered by countably many open bisections, the set of points $x\in\Gu$ such that $\Iso_x=\Gxx$ is residual.
\end{lemma}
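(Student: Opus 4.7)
The plan is to show that the complementary set $B:=\{x\in\Gu : \Gxx\not\subset\Iso\}$ is meager. Observe that $B=s(\IsoG\setminus\Iso)$, and by hypothesis $\G$ admits a countable cover by open bisections $\{W_n\}_{n\ge 1}$. Then $B=\bigcup_n B_n$ with $B_n:=s(W_n\cap(\IsoG\setminus\Iso))$, so it suffices to show that for each open bisection $W$ the set $B_W:=s(W\cap(\IsoG\setminus\Iso))$ is nowhere dense in $\Gu$.

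Fix such a $W$. First I would observe that $W\cap\IsoG$ is closed in $W$: the map $W\to\Gu\times\Gu$ sending $g$ to $(s(g),r(g))$ is continuous, $W\cap\IsoG$ is the preimage of the diagonal under this map, and the diagonal is closed because $\Gu$ is Hausdorff. Next, because $W$ is open in $\G$, every open subset of $W$ is also open in $\G$; from this it follows that the interior of $W\cap\IsoG$ relative to the subspace topology on $W$ is exactly $W\cap\Iso$. Hence $W\cap(\IsoG\setminus\Iso)$ is the topological boundary of the closed set $W\cap\IsoG$ inside $W$, so in particular it is closed in $W$ with empty interior in $W$. Since $s|_W\colon W\to s(W)$ is a homeomorphism onto the open set $s(W)\subset\Gu$, this transports to the statement that $B_W$ is closed in $s(W)$ and has empty interior in $s(W)$.

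It remains to upgrade this to nowhere dense in $\Gu$. Because $s(W)$ is open in $\Gu$ and $B_W$ is closed in $s(W)$, we have $\overline{B_W}\cap s(W)=B_W$, where the closure is taken in $\Gu$. Suppose $U\subset\overline{B_W}$ is a nonempty open subset of $\Gu$. Then $U\cap s(W)$ is open in $\Gu$ and contained in $B_W$; since $B_W$ has empty interior in $s(W)$, this forces $U\cap s(W)=\emptyset$. Hence $U\subset\overline{s(W)}\setminus s(W)$, which has empty interior because $s(W)$ is open — a contradiction. Thus each $B_n$ is nowhere dense in $\Gu$, and the countable union $B$ is meager, as required.

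The main subtlety I expect is this last bookkeeping step: being closed with empty interior inside an open subspace does not automatically imply being nowhere dense in the ambient space, and the transition relies essentially on $s(W)$ being open (i.e.\ on $W$ being a bisection, so that $s|_W$ is an open map). The countability hypothesis on the bisection cover is then exactly what is needed to pass from nowhere dense pieces $B_n$ to a meager union.
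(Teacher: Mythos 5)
Your proof is correct and takes essentially the same route as the paper's: for each bisection $W$ in the countable cover, your set $B_W=s\bigl(W\cap(\IsoG\setminus\Iso)\bigr)$ is exactly the paper's set $X\setminus X^{\circ}$, where $X$ is the fixed-point set of the partial homeomorphism defined by $W$, and both arguments show this is nowhere dense and then take the countable union. Your last paragraph merely spells out in detail the paper's one-line assertion that a locally closed set with empty interior is nowhere dense.
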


\bp
This is essentially \cite{BNRSW}*{Lemma 3.3(a)}, but the result is formulated in a weaker form there. The lemma follows by observing that if $W\subset \G$ is an open bisection and $T\colon s(W)\to r(W)$ is the homeomorphism defined by $W$, then $W$ contributes neither to $\Gxx$ nor to $\Iso_x$ if $x\notin X:= \{y\in s(W): y=Ty\}$,  $W$ contributes to both isotropy groups if $x\in X^\circ$, and finally the set $X\setminus X^{\circ}$ is nowhere dense in $\Gu$, since it is locally closed and has empty interior.
\ep

By the definition of the singular ideals it is clear that the singular ideal of $C^*_r(\Iso)$ is $C^*_r(\Iso)\cap J_\sing$. By Proposition~\ref{prop:intersection} its isotropy fibers are therefore $C^*_e(\Iso_x)\cap J_{\sing,x}$. Since the set of points~$x$ such that $\Iso_x=\Gxx$ is residual by the previous lemma, we conclude that it suffices to prove Theorem \ref{thm:singfiber} for $\Iso$ instead of~$\G$. Assume therefore from now on that $\G$ is a group bundle.

\smallskip

Before continuing the proof, we will need some technical observations regarding the fibers of~$J_\sing$ for group bundles. Recall from Remark \ref{rem:groupbundle} that $C^*_r(\G)$ is a $C_0(\Gu)$-algebra and $C^*_e(\Gxx)$ is its fiber at $x$. Consider also the $C_0(\Gu)$-algebra $C^*_\ess(\G)$ and its fibers $C^*_\ess(\G)_x$. The short exact sequence
$$
0\to J_\sing\to C^*_r(\G)\to C^*_\ess(\G)\to0
$$
of $C_0(\Gu)$-algebras gives rise to short exact sequences
\begin{equation}\label{eq:exact-fiber}
0\to J_{\sing,x}\to C^*_e(\Gxx)\to C^*_\ess(\G)_x\to0.
\end{equation}

Denote by $\pi\colon C^*_r(\G)\to C^*_\ess(\G)$ the quotient map. For $a\in C^*_r(\G)$ write $\pi(a)_x$ for the image of $\pi(a)$ in $C^*_\ess(\G)_x$. For every $a\in C^*_r(\G)$, define two functions $M_a$ and $m_a$ on $\Gu$ by
$$
M_a(x):=\|\pi(a)_x\|,\qquad m_a(x):=\begin{cases}\|\rho_x(a)\|,& \text{if}\ x\in D^c,\\
\liminf_{D^c\ni y\to x}\|\rho_y(a)\|,& \text{if}\ x\in D,\end{cases}
$$
where $D$ denotes the meager set of dangerous points. Similarly to the proof of~\cite{MR4043708}*{Theorem~4.12}, these functions have the following properties.

\begin{lemma} \label{lem:maMa}
For every $a\in C^*_r(\G)$, we have:
\begin{enumerate}
\item $M_a$ is upper semicontinuous;
\item $m_a$ is lower semicontinuous;
\item $m_a\le M_a$;
\item $\displaystyle\sup_{x\in U}m_a(x)=\sup_{x\in U}M_a(x)$ for every open subset $U\subset\Gu$.
\end{enumerate}
\end{lemma}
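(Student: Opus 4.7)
The plan is to derive (1) and (2) as formal consequences of the $C_0(\Gu)$-algebra structure of $C^*_\ess(\G)$ and the standard lower semicontinuity of a field of regular representations, while (3) and (4) are deduced by exploiting a dense subset of $D^c$ on which $\rho_x$ descends to $C^*_\ess(\G)$. More precisely, for (1), since $C^*_\ess(\G)$ is a $C_0(\Gu)$-algebra with fibers $C^*_\ess(\G)_x$ by \eqref{eq:exact-fiber}, the function $x\mapsto \|\pi(a)_x\|$ is upper semicontinuous by the standard property of $C_0(X)$-algebras. For (2), I would first check that $x\mapsto \|\rho_x(a)\|$ is lsc on $D^c$ by writing $\|\rho_x(a^*a)\|$ as a supremum of the matrix coefficients $x\mapsto \langle \rho_x(a^*a) f|_{\G_x}, f|_{\G_x}\rangle$ for $f\in C_c(\G)$, each of which is continuous in $x$ on $D^c$ (the absence of dangerous points removing the ambiguity in the sum defining $\rho_x$). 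The function $m_a$ is by construction the pointwise $\liminf$ of this lsc function along $D^c$, and any such $\liminf$ envelope is automatically lsc on all of~$\Gu$.

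For (3), I would invoke \cite{NS}*{Proposition~1.12} to obtain a dense subset $X\subset D^c$ with $\bigcap_{x\in X}\ker\rho_x = J_\sing$, so that $J_\sing\subset \ker\rho_x$ for every $x\in X$. Since $\G$ is now a group bundle (by the reduction preceding Lemma~\ref{lem:iso-points}), Remark~\ref{rem:groupbundle} gives $\rho_x = \lambda_{\Gxx}\circ\vartheta_{x,e}$, so the condition $J_\sing\subset\ker\rho_x$ forces $\lambda_{\Gxx}(J_{\sing,x})=0$; hence $\rho_x$ descends to a representation $\bar\rho_x$ of the fiber $C^*_\ess(\G)_x$, yielding $\|\rho_x(a)\|\le M_a(x)$ for $x\in X$. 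For an arbitrary $x\in\Gu$, combining lsc of $m_a$, usc of $M_a$ and density of $X$ gives
\[
m_a(x)\le \liminf_{X\ni y\to x}\|\rho_y(a)\| \le \liminf_{X\ni y\to x}M_a(y) \le \limsup_{X\ni y\to x}M_a(y) \le M_a(x),
\]
establishing (3).

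For (4), the inequality $\sup_U m_a\le \sup_U M_a$ follows from (3). For the reverse, lower semicontinuity of $m_a$ together with density of $X\cap U$ in~$U$ gives $\sup_U m_a = \sup_{X\cap U}\|\rho_x(a)\|$, while the $C_0(\Gu)$-algebra identity $\sup_U M_a = \|\pi(a)|_U\|$ in $C^*_\ess(\G)|_U$, combined with faithfulness of the family $\{\bar\rho_x\}_{x\in X\cap U}$ on $C^*_\ess(\G)|_U$ (a localized version of \cite{NS}*{Proposition~1.12} applied to the open sub-group-bundle $\G|_U$), yields $\|\pi(a)|_U\| = \sup_{X\cap U}\|\rho_x(a)\|$. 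The main obstacle is establishing this last faithfulness: it requires carefully unpacking how $J_\sing$ behaves under restriction to $\G|_U$ and identifying $C^*_\ess(\G)|_U$ with the essential C$^*$-algebra of the restricted group bundle up to appropriate ideals, after which the remaining arguments reduce to standard semicontinuity and density manipulations.
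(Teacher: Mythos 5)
Parts (1)--(3) of your plan are essentially the paper's own argument: (1) is the same appeal to general $C_0(\Gu)$-algebra theory, and (3) uses exactly the same ingredients (containment $J_\sing\subset\ker\rho_x$ on a dense subset of $D^c$ via \eqref{eq:sing2}, exactness of \eqref{eq:exact-fiber}, then semicontinuity plus density of that subset). In (2) you replace the paper's citation of the weak containment result \cite{NS}*{Proposition~1.4} by a direct matrix-coefficient computation; this is a legitimate alternative, since for $a,f\in C_c(\G)$ the coefficient in question is $(f^**a^*a*f)|_{\Gu}(x)$, which is continuous at non-dangerous points, but you should normalize the vectors $f|_{\G_x}$ (their norms $(f^**f)(x)^{1/2}$ are continuous at such points, so this is routine), and the passage from the lsc function on $D^c$ to $m_a$ on all of $\Gu$ uses that $D^c$ is dense, which holds because $D$ is meager.

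The genuine gap is in (4). You correctly reduce the nontrivial inequality $\sup_U M_a\le\sup_U m_a$ to the claim that the representations $\bar\rho_x$, $x\in X\cap U$, are jointly faithful on the restriction of $C^*_\ess(\G)$ to $U$, but you do not prove this claim -- you explicitly leave it as ``the main obstacle'', so for $U\ne\Gu$ the argument is incomplete. The claim is true, but the paper bypasses it with a much shorter device: prove (4) first for $U=\Gu$, where \eqref{eq:sing2} says $\bigoplus_{x\in D^c}\rho_x$ is faithful on $C^*_\ess(\G)$, so $\sup_{\Gu}m_a=\|\pi(a)\|=\sup_{\Gu}M_a$; then, for general open $U$, pick $y\in U$ with $M_a(y)>\sup_U M_a-\eps$ and $f\in C_c(\Gu)$ with $\supp f\subset U$, $0\le f\le1$, $f(y)=1$, and use that in a group bundle $f$ acts as the scalar $f(x)$ in each fiber, so $m_{fa}=fm_a$ and $M_{fa}=fM_a$; hence $\sup_U m_a\ge\sup_{\Gu}m_{fa}=\sup_{\Gu}M_{fa}\ge M_{fa}(y)=M_a(y)$. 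If you prefer to keep your localized formulation, the same cutoff trick proves the missing faithfulness: if $\rho_x(b)=0$ for all $x\in X\cap U$, then for $f\in C_c(U)$ one has $\rho_x(fb)=f(x)\rho_x(b)=0$ for all $x$ in the dense subset $(X\cap U)\cup(D^c\setminus\supp f)$ of $D^c$, so $fb\in J_\sing$ by \eqref{eq:sing2}, whence $f(y)\pi(b)_y=0$ for all $y$ and therefore $\pi(b)_y=0$ for every $y\in U$. This is considerably easier than the route you sketch via identifying $C^*_\ess(\G)|_U$ with the essential C$^*$-algebra of $\G_U$, which would additionally require checking that $D(\G_U)=D\cap U$, matching the fibers of the restricted singular ideal (via Proposition~\ref{prop:intersection}), and making sense of $\|\pi(a)|_U\|$ through the isometric embedding of the corresponding quotient into $\prod_{x\in U}C^*_\ess(\G)_x$.
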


\bp
(1) This is a general property of $C_0(\Gu)$-algebras.

\smallskip

(2) By \cite{NS}*{Proposition~1.4}, if $x\in D^c$ and $x\in\bar Y$ for a subset $Y\subset\Gu$, then $\rho_x$ is weakly contained in $\bigoplus_{y\in Y}\rho_y$.
It follows that if $x\in D^c$, then
$$
\|\rho_x(a)\|\le \liminf_{y\to x}\|\rho_y(a)\|,\quad\text{hence}\quad m_a(x)\le \liminf_{D^c\ni y\to x}m_a(y).
$$
Therefore $m_a$ is lower semicontinuous on $D^c$. By the definition of $m_a$ on $D$, this implies that~$m_a$ is lower semicontinuous on $\Gu$.

\smallskip

(3) If $x\in D^c$, then by \eqref{eq:sing2} we have $J_\sing\subset\ker\rho_x$. Hence $J_{\sing,x}\subset \ker(C^*_e(\Gxx)\to C^*_r(\Gxx))$. By exactness of~\eqref{eq:exact-fiber} we get
$$
m_a(x)=\|\rho_x(a)\|\le\|\pi(a)_x\|=M_a(x).
$$
As $m_a$ is lower semicontinuous and $M_a$ is upper semicontinuous, we conclude that $m_a(x)\le M_a(x)$ for all $x\in\Gu$.

\smallskip

(4) Let us first check this for $U=\Gu$. By \eqref{eq:sing2}, the direct sum of the representation $\rho_x$ for $x\in D^c$ defines a faithful representation of $C^*_\ess(\G)$. Hence
$$
\sup_{x\in\Gu}m_a(x)=\sup_{x\in D^c}m_a(x)=\sup_{x\in D^c}\|\rho_x(a)\|=\|\pi(a)\|.
$$
On the other hand,
$$
\sup_{x\in\Gu}M_a(x)=\sup_{x\in\Gu}\|\pi(a)_x\|=\|\pi(a)\|
$$
by properties of $C_0(\Gu)$-algebras.

For arbitrary $U$, by (3) we obviously have $\sup_{x\in U}m_a(x)\le\sup_{x\in U}M_a(x)$. To prove the equality, fix $\eps>0$ and choose $y\in U$ such that
$$
M_a(y)>\sup_{x\in U}M_a(x)-\eps.
$$
Let $f\in C_c(\Gu)$ be a function such that $\supp f\subset U$, $0\le f\le 1$ and $f(y)=1$. Then
\begin{align*}
 \sup_{x\in U}m_a(x) &\ge \sup_{x\in \Gu}f(x)m_a(x)=\sup_{x\in \Gu}m_{fa}(x)=\sup_{x\in \Gu}M_{fa}(x)\\
&\ge M_{fa}(y)= M_a(y)>\sup_{x\in U}M_a(x)-\eps,
\end{align*}
finishing the proof of the lemma.
\ep

\bp[Proof of Theorem \ref{thm:singfiber}]
As we have already shown, it suffices to consider the case where $\G$ is a group bundle. If $\G$ is Hausdorff and second countable, then the result follows from~\cite{MR4043708}*{Theorem~4.12}. For the general case, let $(W_n)^\infty_{n=1}$ be a sequence of open bisections covering $\G$. For a finite subset $F\subset\mathbb N$ and numbers $\alpha_k\in\mathbb Q+i\mathbb Q$, $k\in F$, consider the function
$$
f:=\sum_{k\in F}\alpha_k\un_{W_k}\quad\text{on}\quad \G_{U(f)},\quad\text{where}\quad U(f):=\bigcap_{k\in F}s(W_k).
$$
Although $f\not\in C_c(\G_{U(f)})$ in general, we can for each $x\in U(f)$ take any function $h\in C_{c}(U(f))$ that is constantly $1$ on a neighbourhood of $x$ and set
$$
M_{f}(x):= M_{f*h}(x) \quad \text{and} \quad m_{f}(x):=m_{f*h}(x),
$$
where $M_{f*h}$ and $m_{f*h}$ are defined with respect to $\G_{U(f)}$. It is straightforward to check that~$M_{f}$ and~$m_{f}$ are independent of any choices, and since all the properties in Lemma \ref{lem:maMa} are local, they are also satisfied for $M_{f}$ and $m_{f}$. It follows that each of the sets
$$
\{x\in U(f) \mid m_f(x)\le p,\ M_f(x)\ge q\},\quad p,q\in\mathbb Q,\ p<q,
$$
is closed and has empty interior. Hence the set
$$
X (f):=\{x\in U(f) \mid m_{f}(x)=M_{f}(x) \text{ and $x$ is not a dangerous point in $U(f)$} \}
$$
is residual in $U(f)$. Let $x\in X(f)$. Then, for any $h\in C_{c}(U(f))$ that is $1$ on a neighbourhood of~$x$, we have
\begin{align}
\|\vartheta_{x,r}(f*h)\|&=\|\rho_x(f*h)\|=m_{f*h}(x)=M_{f*h}(x)\nonumber\\
&=\|\pi(f*h)_x\|=\|\vartheta_{x,e}(f*h)+J_{\sing,x}\|,\label{eq:Jsingkern}
\end{align}
where we used exactness of~\eqref{eq:exact-fiber}.

Since $X(f)$ is residual in $U(f)$, the set $U(f)\setminus X(f)$ is meager in $U(f)$, hence in $\Gu$. Take now any $x$ in the residual set $\Gu\setminus\bigcup_f(U(f)\setminus X(f))$. For each $q$ in the group algebra of $\Gxx$ over $\mathbb Q+i\mathbb Q$ there exist functions $f$ and $h\in C_{c}(U(f))$ as above with $\eta_{x}(f*h)=q$. So by density of the group algebra of $\Gxx$ over $\mathbb Q+i\mathbb Q$ in $C^*_e(\Gxx)$ and by \eqref{eq:Jsingkern} we conclude that
$$
J_{\sing,x}=\ker(C^*_e(\Gxx)\to C^*_r(\Gxx)),
$$
which finishes the proof of Theorem \ref{thm:singfiber}.
\ep

It should be said that we know very little about the isotropy fibers of $J_\sing$.

\begin{question}
Is the set $\{x: J_{\sing,x}\ne0\}$ always meager? Is it contained in the set $D$ of dangerous points?
\end{question}

We suspect the answer is no, but do  not have examples.

\bigskip

\section{Graded groupoids with essentially central isotropy}\label{sec:graded}


In this section we will use Theorem~\ref{thm:induction} to classify primitive ideals for a class of groupoids. We need some preparation to formulate the result.

\smallskip

For a locally compact \'etale groupoid $\G$ and a unit $x\in\Gu$, we will consider the closure $\overline{[x]}\subset\Gu$ of the orbit of $x$ and the open (in the relative topology) subgroupoid $\IsoGx{x}^\circ\subset\G_{\overline{[x]}}$. It should not be confused with a potentially smaller open subgroupoid $\G_{\overline{[x]}}\cap \Iso\subset \G_{\overline{[x]}}$.

The subgroup $\IsoGx{x}^\circ_x\subset\Gxx$ is always normal. We will consider groupoids such that $\IsoGx{x}^\circ_x$ is a central subgroup of $\Gxx$ for all $x\in\Gu$. Assume then that $J\subset C^*(\Gxx)$ is a primitive ideal and choose an irreducible representation $\pi$ with kernel $J$. Then the restriction of~$\pi$ to $\IsoGx{x}^\circ_x$ defines a character $\chi_J$. This character depends on $J$ but not on the choice of~$\pi$.

\smallskip

We are now ready to formulate the main result of this section.

\begin{thm}\label{thm:primitive}
Assume $\G$ is an amenable second countable Hausdorff locally compact \'etale groupoid graded by a discrete group $\Gamma$, with grading $\Phi\colon\G\to\Gamma$. Assume that for every $x\in\Gu$ the map $\Phi$ is injective on $\Gxx$ and $\Phi(\IsoGx{x}^\circ_x)\subset Z(\Gamma)$. Then all primitive ideals of $C^*(\G)$ are induced, so they correspond to the pairs $(x,J)$ consisting of $x\in\Gu$ and $J\in\Prim C^*(\Gxx)$.

Furthermore, given two such pairs $(x_1,J_1)$ and $(x_2,J_2)$, we have $\Ind J_1=\Ind J_2$ if and only if $\overline{[x_1]}=\overline{[x_2]}$ and the characters $\chi_{J_1}$ and $\chi_{J_2}$ define the same character on $\Phi(\IsoGx{x_1}^\circ_{x_1})=\Phi(\IsoGx{x_2}^\circ_{x_2})$.
\end{thm}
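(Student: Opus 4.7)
The plan is to proceed in four steps.

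\textbf{Step 1 (parameterization and orbit closure).} The statement that every primitive ideal of $C^*(\G)$ has the form $\Ind J$ is the Ionescu--Williams refinement of the Effros--Hahn conjecture, which applies under our amenability, second-countability, and Hausdorff hypotheses. For the forward direction of the equivalence, $\overline{[x_1]} = \overline{[x_2]} =: F$ follows at once from Lemma~\ref{lem:orbit}, since $U(\Ind J_i)^c = \overline{[x_i]}$.

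\textbf{Step 2 (identification of $K$).} With $F = \overline{[x_1]} = \overline{[x_2]}$, set $K_y := \Phi(\IsoGx{x_1}^\circ_y)$ for $y \in F$. The map $y \mapsto K_y$ is locally ``lower monotone'': given $k = \Phi(h)$ with $h \in \IsoGx{x_1}^\circ_{y_0}$, an open bisection $V$ of $\G_F$ through $h$ contained in $\operatorname{Iso}(\G_F)$ satisfies $\Phi \equiv k$ on $V$ (since $\Phi$ is continuous into the discrete $\Gamma$), so every $y \in s(V)$ has an isotropy element in $V$ with $\Phi$-value $k$, whence $k \in K_y$. It is also orbit-invariant, because $\Phi(\gamma h \gamma^{-1}) = \Phi(\gamma)\Phi(h)\Phi(\gamma)^{-1} = \Phi(h)$ by centrality of the image. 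Density of each $[x_i]$ in $F$ combined with these two properties yields $K_{x_1} = K_{x_2} =: K$.

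\textbf{Step 3 (forward direction for characters).} Fix $c \in \IsoGx{x_i}^\circ_{x_i}$ with $k := \Phi(c)$. Choose an open bisection $W$ of $\G$ with $c \in W$, $W \cap \G^{x_i}_{x_i} = \{c\}$, and $W \cap \G_F \subset \operatorname{Iso}(\G_F)$, and $f \in C_c(W)$ with $f(c) = 1$; then $\vartheta_{x_i}(f) = c$ in $\C\G^{x_i}_{x_i}$. Schur's lemma and centrality of $\IsoGx{x_i}^\circ_{x_i}$ inside $\G^{x_i}_{x_i}$ force $\pi_i(c) = \chi_{J_i}(k) \cdot 1$ for any irreducible $\pi_i$ with kernel $J_i$, so $c - \chi_{J_i}(k) \in J_i$. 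Using that $f$ and suitable cutoffs lie in the multiplicative domain of each $\vartheta_{y,\nu}$ (Lemma~\ref{lem:multdomain}), and choosing a real-valued $q \in C_c(\Gu)$ supported in $s(W)$ so that $q(y) = f(h_y)$ along the local section of $W$, the element $b_c := f - \chi_{J_i}(k) q$ satisfies $\vartheta_{y}(b_c^*b_c) \equiv |f(h_y) - q(y)|^2 \equiv 0 \pmod{J_{i,y}}$ for $y \in [x_i]$, hence $b_c \in \Ind J_i$ with $\vartheta_{x_i}(b_c) = c - \chi_{J_i}(k)$. Thus $(\Ind J_i)_{x_i} \cap C^*(\IsoGx{x_i}^\circ_{x_i})$ contains the maximal ideal $\ker(\chi_{J_i} \circ \Phi^{-1})$ of the commutative $C^*(\IsoGx{x_i}^\circ_{x_i}) \cong C(\hat K)$ and is contained in $J_i \cap C^*(\IsoGx{x_i}^\circ_{x_i}) = \ker(\chi_{J_i} \circ \Phi^{-1})$, so the two coincide. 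From $\Ind J_1 = \Ind J_2$ and Lemma~\ref{lem:invariance} (using $K_{x_1} = K_{x_2}$), we conclude $\chi_{J_1} = \chi_{J_2}$ on $K$.

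\textbf{Step 4 (converse---the main obstacle).} Assuming $\overline{[x_1]} = \overline{[x_2]} = F$ and $\chi_{J_1} = \chi_{J_2} = \chi$ on $K$, reduce to $F = \Gu$ via Proposition~\ref{prop:exact}: since $C^*(\G_{F^c}) \subset \Ind J_i$, it suffices to prove the statement for the quotient $C^*(\G_F) \cong C^*(\G)/C^*(\G_{F^c})$. With $F = \Gu$, $\IsoGx{x_1}^\circ$ is an open subgroupoid of $\G$ and $\chi$ defines, via $\Phi$, a consistent character on every fiber of the group bundle $C^*(\IsoGx{x_1}^\circ)$. Let $M_\chi \subset C^*(\G^{x_1}_{x_1})$ be the ideal generated by $\{c - \chi(\Phi(c)) : c \in \IsoGx{x_1}^\circ_{x_1}\}$; Step~3 (applied in reverse, together with Lemma~\ref{lem:invariance}) shows $M_\chi$ is $\G$-invariant and contained in $J_i$ after transport, so $\Ind M_\chi \subset \Ind J_i$. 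The hard step is the reverse inclusion, equivalent to showing that $\Ind M_\chi$ is a primitive ideal of $C^*(\G)$, i.e.\ that $C^*(\G)/\Ind M_\chi$ is prime. The expected approach is to reapply the Ionescu--Williams machinery to this quotient, viewed as the $C^*$-algebra of a ``$\chi$-twisted reduction'' of $\G$; here the injectivity of $\Phi|_{\G^x_x}$ is essential, because it forces the reduced isotropy $\G^{x_1}_{x_1}/\IsoGx{x_1}^\circ_{x_1}$ to embed into $\Gamma/K$, so the twisted quotient is a sufficiently tame groupoid $C^*$-algebra (without exotic cocycles on non-abelian isotropy) that orbit density in $F$ rules out any proper primitive ideal strictly between $\Ind M_\chi$ and $\Ind J_i$.
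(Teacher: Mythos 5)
Your Steps 1--3 follow the same general lines as the paper and can be made to work, but with one repairable defect: in Step 3 you compare $\chi_{J_1}$ and $\chi_{J_2}$ by transporting fibers with Lemma~\ref{lem:invariance}, which only moves ideals along a single orbit, whereas $x_1$ and $x_2$ are only assumed to have the same orbit \emph{closure}. The fix is to test your element $b_c\in\Ind J_1=\Ind J_2$ against the membership criterion of Lemma~\ref{lem:induced-ideal2} at points $y\in[x_2]\cap s(W)$ (such points exist by density of $[x_2]$ in $\overline{[x_1]}$); there one has $h_y-\chi_{J_2}(\Phi(c))\in J_{2,y}$ and $f(h_y)\ne0$ for $y$ close to $x_1$, which forces $\chi_{J_1}(\Phi(c))=\chi_{J_2}(\Phi(c))$.

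The genuine gap is Step 4, which is the heart of the theorem and which you do not prove. First, even your inclusion $\Ind M_\chi\subset\Ind J_2$ is unjustified: $M_\chi$ sits over $x_1$ and $J_2$ over $x_2$, and ``transport'' by Lemma~\ref{lem:invariance} again only works within one orbit; comparing induced ideals attached to two \emph{different} dense orbits is exactly the hard point. Second, the reverse inclusion is not equivalent to primitivity of $\Ind M_\chi$: knowing $\Ind M_\chi$ is primitive does not exclude $\Ind M_\chi\subsetneq\Ind J_i$, and the proposed ``$\chi$-twisted reduction'' plus Ionescu--Williams argument is never carried out. The paper's proof supplies precisely the ingredients your sketch is missing: (i) amenability enters (beyond Ionescu--Williams) through a Fell-absorption argument (Lemma~\ref{lem:Fell}) showing that any irreducible representation of $\G^y_y$ whose restriction to $\Iso_y$ is $\chi(\Phi(\cdot))1$ is weakly contained in $\Ind^{\G^y_y}_{\Iso_y}(\chi\circ\Phi)$, whence $\ker\Ind^{\G}_{\Iso_y}(\chi\circ\Phi)\subset\Ind J$ for every primitive $J$ with that central character; (ii) a dedicated lemma, proved by a matrix-coefficient convergence argument using the grading and Lemma~\ref{lem:iso}, showing that the weak equivalence class of $\rho^\chi_z=\Ind^\G_{\Iso_z}(\chi\circ\Phi)$ is the same for all $z$ with dense orbit --- this is what bridges distinct orbits, and nothing in your proposal plays this role; and (iii) Theorem~\ref{thm:induction} combined with the explicit separation construction of Lemma~\ref{lem:primitive-neigbour} (a positive function on $U\times\hat{A'}$ distinguishing two characters) to pin the character down and upgrade the inclusion $\ker\rho^\chi_x\subset I$ to equality. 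Without substitutes for (i)--(iii), your Step 4 is a plan, not a proof, and the converse direction of the theorem remains open in your write-up.
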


Note that, as we will see soon, the equality $\Phi(\IsoGx{x_1}^\circ_{x_1})=\Phi(\IsoGx{x_2}^\circ_{x_2})$ is not an extra requirement, but a consequence of the assumption $\overline{[x_1]}=\overline{[x_2]}$. We also remind that for every $J\in\Prim C^*(\Gxx)$ the induced ideal $\Ind J$ is primitive, see~\cite{IW0}.

\smallskip

When $\Gamma$ is abelian, the isotropy groups are abelian and every primitive ideal of $C^*(\Gxx)$ is given by a character of $\Gxx$. Every such character can be written as $\chi\circ\Phi$ for $\chi\in\hat\Gamma$. This explains why Theorem~\ref{thm:C} is a particular case of Theorem~\ref{thm:primitive}.

\smallskip

For the proof of the theorem we need the following, probably known, observation. Recall that a closed invariant subset of $\Gu$ is called \emph{irreducible}, if it is not the union of two proper closed invariant subsets.

\begin{lemma}\label{lem:irreducible_set}
For any locally compact \'etale groupoid $\G$ with second countable unit space $\Gu$, the unit space is irreducible if and only if $\Gu=\overline{[x]}$ for some $x$. Furthermore, then the set of such points $x$ is residual.
\end{lemma}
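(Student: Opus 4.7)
\medskip

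\noindent\textit{Proof plan.} The forward implication is straightforward: if $\Gu=\overline{[x]}$ and $\Gu=F_1\cup F_2$ with $F_1,F_2$ closed invariant, then $x\in F_i$ for some $i$, so by invariance and closedness $\overline{[x]}\subset F_i$, forcing $F_i=\Gu$. Hence $\Gu$ admits no nontrivial decomposition and is irreducible.

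For the converse together with the residual claim I would use the Baire category theorem. Fix a countable basis $(U_n)_{n\geq 1}$ of nonempty open subsets of $\Gu$, which exists because $\Gu$ is second countable. For each $n$ set
$$
W_n:=r(s^{-1}(U_n))=\{y\in\Gu:[y]\cap U_n\neq\emptyset\}.
$$
Since $r$ and $s$ are local homeomorphisms, they are open, so $W_n$ is open; it is clearly invariant. The key observation is that $\overline{W_n}$ is still invariant: given $y\in\overline{W_n}$ (approximated by a sequence $y_k\in W_n$, using that $\Gu$ is second countable hence first countable) and $\gamma\in\G$ with $r(\gamma)=y$, pick an open bisection $V\ni\gamma$; then $r|_V$ is a homeomorphism onto $r(V)$, so we can lift $y_k\to y$ to $\gamma_k\to\gamma$ in $V$, and $s(\gamma_k)\in W_n$ by invariance, giving $s(\gamma)\in\overline{W_n}$.

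Consequently $\Gu=\overline{W_n}\cup W_n^c$ is a decomposition into closed invariant subsets, and since $W_n\supset U_n\neq\emptyset$, the piece $W_n^c$ is proper; irreducibility of $\Gu$ therefore forces $\overline{W_n}=\Gu$. So each $W_n$ is open and dense. By the Baire category theorem applied to the locally compact Hausdorff space $\Gu$, the set $R:=\bigcap_n W_n$ is residual (in particular nonempty). Unpacking the definition, $x\in R$ iff $[x]$ meets every $U_n$, iff $[x]$ is dense in $\Gu$, iff $\Gu=\overline{[x]}$. This yields both the existence of such an $x$ and the residual claim in a single step.

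The only nonroutine point is the verification that $\overline{W_n}$ is invariant; once that is settled, everything else is bookkeeping. I would expect that to be the main obstacle, since it is the one place where the étale structure (local homeomorphism property of $r$) enters essentially.
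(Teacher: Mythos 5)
Your proposal is correct and follows essentially the same route as the paper: the forward direction is the same trivial argument, and for the converse the paper also takes a countable base $(U_n)_n$, observes that each saturation $[U_n]=r(\G_{U_n})$ (your $W_n$) is open, invariant and dense by irreducibility via the decomposition $\Gu=\overline{[U_n]}\cup(\Gu\setminus[U_n])$, and concludes by Baire that $\bigcap_n[U_n]$ is a residual set of points with dense orbit. Your explicit verification that $\overline{W_n}$ is invariant (via lifting through bisections) is a detail the paper leaves implicit, but it is the same argument.
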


\bp
Assume $\Gu=\overline{[x]}$. If $\Gu=F_1\cup F_2$ for invariant closed sets $F_1$ and $F_2$, then $x\in F_i$ for some $i$ and hence $F_i=\Gu$, so $\Gu$ is irreducible. Conversely, assume $\Gu$ is irreducible. Let $(U_n)_n$ be a countable base of the topology on $\Gu$, with $U_{n}$ nonempty for all $n$. For every~$n$, the saturation $[U_n]:=r(\G_{U_n})$ of $U_n$ is an open invariant set, hence it is dense in $\Gu$ by irreducibility, as $\Gu=\overline{[U_n]}\cup (\Gu\setminus [U_n])$. Then the set $X:=\cap_n [U_n]$ is residual. The orbit of every point $x\in X$ intersects $U_n$ for all $n$, hence $[x]$ is dense in $\Gu$.
\ep

Turning to the proof of Theorem~\ref{thm:primitive}, we already know by~\cite{IW} that under our assumptions every primitive ideal is induced, so the main goal is to understand when induction gives equal ideals. In fact, we will only need to know that every primitive ideal contains an induced primitive ideal, which is considered to be the ``easy'' part of the Effros--Hahn conjecture~\cite{GR}.

So assume that $I\subset C^*(\G)$ is a primitive ideal. It is well-known that then the closed invariant set $U(I)^c$ is irreducible, see, e.g., \cite{SW}*{Proposition~2.3}. As $C^*(\G_{U(I)})\subset I$ and $C^*(\G)/C^*(\G_{U(I)})\cong C^*(\G_{U(I)^c})$, $I$ defines a primitive ideal in $C^*(\G_{U(I)^c})$ having trivial intersection with $C_0(\G^{(0)}_{U(I)^c})$. Recall also that, by Lemma~\ref{lem:orbit}, if $I=\Ind J$ for some $J\subset C^*(\Gxx)$, then $U(I)^c=\overline{[x]}$. It follows that in order to prove  Theorem~\ref{thm:primitive} it suffices to establish the following result.

\begin{prop}\label{prop:primitive}
Assume $\G$ is an amenable second countable Hausdorff locally compact \'etale groupoid graded by a discrete group $\Gamma$, with grading $\Phi\colon\G\to\Gamma$. Assume that the unit space $\Gu$ is irreducible and for every $x\in\Gu$ the map $\Phi$ is injective on $\Gxx$ and $\Phi(\Iso_x)\subset Z(\Gamma)$. Fix a point $x\in\Gu$ with dense orbit. Then the primitive ideals $I\subset C^*(\G)$ such that $C_0(\Gu)\cap I=0$ are precisely the ideals $\Ind J$ for $J\in\Prim C^*(\Gxx)$.

Furthermore, given another point $y\in\Gu$ with dense orbit, $J\in\Prim C^*(\Gxx)$ and $H\in\Prim C^*(\G^y_y)$, we have $\Ind J=\Ind H$ if and only if the characters $\chi_J$ and $\chi_H$ define the same character on $\Phi(\Iso_x)=\Phi(\Iso_y)$.
\end{prop}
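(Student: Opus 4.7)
The proof naturally splits into three stages: identifying a common subgroup $H_0\subset Z(\Gamma)$, verifying existence via Effros--Hahn, and classifying induced ideals by their characters on $H_0$.

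\emph{Step 1 (A common subgroup $H_0$).} First I would establish that $\Phi(\Iso_y)=H_0$ is a fixed subgroup of $Z(\Gamma)$ for every $y\in\Gu$ with dense orbit. Injectivity of $\Phi|_{\G^y_y}$ combined with $\Phi(\Iso_y)\subset Z(\Gamma)$ forces $\Iso_y$ to be an abelian central subgroup of $\G^y_y$, so the character $\chi_J$ is well defined. For each $\gamma\in\Gamma$, the set $U_\gamma:=\{y\in\Gu:\gamma\in\Phi(\Iso_y)\}$ is open (each $g\in\Iso$ lies in an open bisection on which $\Phi$ is constant) and $\G$-invariant ($\Iso$ is normal in $\G$ by a product-of-bisections argument, and conjugation preserves $\Phi|_{\Iso}$ by centrality of the image). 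By irreducibility of $\Gu$, any nonempty $U_\gamma$ is dense and hence contains every point of dense orbit. Consequently, via $\Phi$, the characters $\chi_J$ for $J\in\Prim C^*(\G^y_y)$ (with $y$ of dense orbit) all live in the common dual $\widehat{H_0}$.

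\emph{Step 2 (Existence).} By the Ionescu--Williams version of Effros--Hahn~\cite{IW}, every primitive ideal $I\subset C^*(\G)$ equals $\Ind J'$ for some $J'\in\Prim C^*(\G^y_y)$. The hypothesis $C_0(\Gu)\cap I=0$ gives $U(I)=\emptyset$, so $\overline{[y]}=\Gu$ by Lemma~\ref{lem:orbit}. To replace $y$ by the prescribed $x$, I would pick any $J\in\Prim C^*(\Gxx)$ whose character agrees with $\chi_{J'}$ in $\widehat{H_0}$; such $J$ exists because every character of the central subgroup $\Iso_x\subset\Gxx$ extends to some irreducible representation of $\Gxx$. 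The classification in Step~3 then yields $\Ind J=\Ind J'$.

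\emph{Step 3 (Classification).} The key claim is that $\Ind J_1=\Ind J_2$ if and only if $\chi_{J_1}=\chi_{J_2}$ as elements of $\widehat{H_0}$. For necessity, I would recover $\chi_J$ from $\Ind J$ via Proposition~\ref{prop:intersection}, showing $(C^*(\Iso)\cap\Ind J)_x=C^*(\Iso_x)\cap(\Ind J)_x=\ker\chi_J$: one inclusion uses $(\Ind J)_x\subset J$ together with $J\cap C^*(\Iso_x)=\ker\chi_J$, and the other builds the $\G$-invariant character $\tilde\chi_J$ of the group bundle $\Iso$ over $[x]$ (transported via Lemma~\ref{lem:invariance} using Step~1) and shows that the associated closed ideal of $C^*(\Iso)\subset C^*(\G)$ lies inside $\Ind J$. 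For sufficiency, when $\chi_{J_1}=\chi_{J_2}=\chi$, I would realize both $\Ind J_i$ as the kernel of a canonical $\chi$-twisted representation depending only on $(x,\chi)$: let $K_\chi\subset C^*(\G)$ be the ideal generated by $\{f-\chi(\Phi(g))\,f:f\in C_c(\Iso),\ \supp f\subset\Phi^{-1}(H_0)\}$, verify that both $\Ind J_i$ contain $K_\chi$, and use amenability together with the grading to show that the primitive spectrum of $C^*(\G)/K_\chi$ is indexed only by invariant orbit closures, so that $\Ind J_1$ and $\Ind J_2$ descend to the same primitive ideal of the quotient.

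The main obstacle is the sufficiency direction in Step~3: different irreducible representations of $\Gxx$ with the same restriction to $\Iso_x$ must induce to the same kernel in $C^*(\G)$. This parallels the twisted $k$-graph analysis in~\cite{SW} and likely requires reducing to the quotient groupoid $\G/\Iso$ (or the skew product $\G\times_\Phi\Gamma$), where a twisted Effros--Hahn-type theorem for amenable groupoids applies to identify the spectrum of $C^*(\G)/K_\chi$ with the orbit space.
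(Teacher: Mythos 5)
Parts of your plan are sound and close to the paper: your Step 1 is essentially the paper's Lemma~\ref{lem:iso} (and your invariance/openness argument for $U_\gamma$ works, giving $\Phi(\Iso_z)\subset H_0$ for all $z$ with equality at dense-orbit points), the containment $K_\chi\subset\Ind J_i$ can indeed be verified directly on the induced representation, and the necessity half of Step 3 can be pushed through via $C^*(\Iso)$ and Proposition~\ref{prop:intersection} (though as written you conflate the base points: for $J\in\Prim C^*(\G^y_y)$ with $y\ne x$ the identity $(\Ind J)_x\subset J$ makes no sense; one must transport the character via $\Phi$ and run a continuity argument over the group bundle $\Iso$, using that $[y]$ is dense). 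The genuine gap is the sufficiency direction of Step 3, on which your Step 2 also relies. You assert that $C^*(\G)/K_\chi$ is (a twisted algebra of) the quotient groupoid $\G/\Iso$ and that its primitive spectrum "is indexed only by invariant orbit closures", but neither claim is established, and both hide exactly the hard content. First, $\G/\Iso$ is Hausdorff only if $\Iso$ is closed in $\G$, which is not available in this generality: the paper's own continuity lemma shows that a limit $g$ of elements $g_n\in\Iso_{w_n}$ lies in $\Iso_z$ only when $z=s(g)$ has \emph{dense orbit} (via $\Phi(\Iso_z)=A$ and injectivity of $\Phi$ on $\G^z_z$); at other points $\Phi(\Iso_z)$ can be a proper subgroup and the argument breaks down. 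Sims--Williams must prove closedness of $\Iso$ separately in their special setting; you cannot simply import it.

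Second, even granting a twisted-groupoid model for $C^*(\G)/K_\chi$, the statement that its primitive ideals with trivial intersection with $C_0(\Gu)$ are determined by the orbit closure is essentially the proposition being proved: the isotropy of $\G/\Iso$ at $z$ is $\G^z_z/\Iso_z$, which is nontrivial off a residual set, so an Effros--Hahn-type theorem only tells you such ideals are \emph{induced} from some dense-orbit point, not that they coincide; and an ideal-intersection/uniqueness theorem for the quotient would require Hausdorffness, inner exactness or amenability of the twisted quotient, and topological principality of the relevant reductions, none of which you verify. The paper deliberately avoids the quotient: it uses Lemma~\ref{lem:Fell} (Fell absorption plus amenability of $\G^y_y/\Iso_y$) to replace $J$ by $\ker\Ind^{\G^y_y}_{\Iso_y}(\chi\circ\Phi)$, proves by a weak-containment/continuity argument that $\ker\Ind^\G_{\Iso_z}(\chi\circ\Phi)$ is independent of the dense-orbit point $z$, and then pins down $\Ind J=\ker\Ind^\G_{\G^{x_0}_{x_0}}(\chi\circ\Phi)$ at a point $x_0$ with $\Iso_{x_0}=\G^{x_0}_{x_0}$ using Theorem~\ref{thm:induction} together with the explicit separating element of Lemma~\ref{lem:primitive-neigbour}. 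Without a substitute for these steps, your Step 3 (and hence the transfer from $y$ to $x$ in Step 2) does not go through.
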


For the proof let us first fix a point $x$ such that $\overline{[x]}=\Gu$ and $\Iso_x=\Gxx$, which is possible since by Lemmas~\ref{lem:iso-points} and~\ref{lem:irreducible_set} the set of such points is residual. By assumption, $A:=\Phi(\Iso_x)$ is a subgroup of $Z(\Gamma)$.

\begin{lemma}\label{lem:iso}
For any $y\in\Gu$, we have $\Phi(\Iso_y)\subset A$ and the equality holds if $y$ has dense orbit.
\end{lemma}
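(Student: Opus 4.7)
My plan is to exploit that $\Phi$ is locally constant (being continuous with discrete target $\Gamma$) and that conjugation preserves $\Phi$-values via the cancellation $-\Phi(\gamma)+\Phi(h)+\Phi(\gamma)=\Phi(h)$, together with density of the chosen orbits, to transport elements of one open isotropy group to another. The main technical tool is that for any open bisections $V,W$ of $\G$ with $W\subset\IsoG$, the triple product $V^{-1}WV$ is an open bisection contained in $\IsoG$: openness and the bisection property come from continuity and the standard result for binary products of bisections; the inclusion in $\IsoG$ follows because any element $v_1^{-1}wv_2\in V^{-1}WV$ satisfies $r(v_1)=r(w)=s(w)=r(v_2)$, which forces $v_1=v_2$ by injectivity of $r|_V$, and then $s(v_1^{-1}wv_2)=s(v_1)=r(v_1^{-1}wv_2)$.

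For the containment $\Phi(\Iso_y)\subset A$ at arbitrary $y$, I start with $g\in\Iso_y$ and choose an open bisection $W\subset\IsoG$ containing $g$, shrunk so that $\Phi\equiv\Phi(g)$ on $W$ by local constancy. Since $s(W)$ is open and nonempty and $[x]$ is dense in $\Gu$ (by the setup preceding the lemma), I pick $\gamma\in\G_x$ with $z:=r(\gamma)\in s(W)$, take $h\in W$ with $s(h)=z$ (automatically $r(h)=z$), and fix an open bisection $V\ni\gamma$. Then $V^{-1}WV\subset\Iso$ by the above, so $\gamma^{-1}h\gamma\in V^{-1}WV\cap\Gxx$ lies in $\Iso_x$, and $\Phi(\gamma^{-1}h\gamma)=\Phi(h)=\Phi(g)$ shows $\Phi(g)\in\Phi(\Iso_x)=A$.

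For the reverse inclusion $A\subset\Phi(\Iso_y)$ when $[y]$ is dense, I run the symmetric construction: given $g\in\Iso_x$ with an open bisection $W\subset\IsoG$ satisfying $\Phi|_W=\Phi(g)$, density of $[y]$ furnishes $\gamma'\in\G_y$ with $r(\gamma')\in s(W)$, and conjugating as above yields an element of $(V')^{-1}WV'\cap\G^y_y\subset\Iso_y$ with $\Phi$-value $\Phi(g)$. The main subtlety throughout is simply bookkeeping with source and range when forming the conjugate and verifying that $V^{-1}WV$ lands in the interior of the isotropy bundle; no deeper obstacle arises, since $\Phi$ takes values in a discrete group.
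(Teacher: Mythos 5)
Your proof is correct and follows essentially the same route as the paper: openness of $\Iso$, local constancy of $\Phi$, density of the orbits and conjugation-invariance of $\Phi$-values, where the paper phrases the first step via the open set $r(\Iso\cap\Phi^{-1}(a))$ and simply quotes $(\Ad\gamma)(\Iso_x)=\Iso_y$, a fact your bisection computation $V^{-1}WV\subset\IsoG$ makes explicit. One small caveat: the additive cancellation $-\Phi(\gamma)+\Phi(h)+\Phi(\gamma)=\Phi(h)$ tacitly assumes $\Gamma$ abelian, whereas in the setting of Proposition~\ref{prop:primitive} you should instead invoke the hypothesis $\Phi(\Iso_z)\subset Z(\Gamma)$, which (since $h$ lies in the open isotropy over $z$) gives exactly the centrality of $\Phi(h)$ needed for $\Phi(\gamma)^{-1}\Phi(h)\Phi(\gamma)=\Phi(h)$.
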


\bp
As $A\subset Z(\Gamma)$ and $(\Ad\gamma)(\Iso_x)=\Iso_y$ for any $\gamma\in \G^y_x$, we immediately get that $\Phi(\Iso_y)=A$ for all $y\in[x]$. Next, take any point $y\in\Gu$. If  $a\in\Phi(\Iso_y)$ for some  $a\in\Gamma$, then $a\in\Phi(\Iso_z)$ for all $z$ close to $y$, namely, for all $z$ in the open set $r(\Iso\cap\Phi^{-1}(a))$. As $[x]$ is dense in~$\Gu$, we conclude that $a\in A$. Therefore $\Phi(\Iso_y)\subset\Phi(\Iso_x)$. We haven't used that $\Iso_x=\Gxx$ in this argument, so if in addition $[y]$ is dense in $\Gu$, then by symmetry we also get $\Phi(\Iso_x)\subset \Phi(\Iso_y)$.
\ep

Assume now that $I\subset C^*_r(\G)$ is a primitive ideal such that $C_0(\Gu)\cap I=0$. By~\cite{IW}, there is a point $y\in\Gu$ and a primitive ideal $J\subset C^*(\G^y_y)$ such that $\Ind J\subset I$. By Lemma~\ref{lem:orbit}, the orbit of $y$ must be dense in $\Gu$. Consider the character $\chi_J$ of $\Iso_y$.

We need the following known result. 

\begin{lemma}\label{lem:Fell}
Assume $G$ is an amenable discrete group, $Z\subset G$ is a central subgroup, $\chi$ is a character of $Z$ and $\pi\colon G\to B(H)$ is a unitary representation of $G$ such that $\pi|_Z=\chi(\cdot)1$. Then~$\pi$ is weakly contained in $\Ind^G_Z\chi$.
\end{lemma}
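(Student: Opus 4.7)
The plan is to combine the projection formula for induced representations with amenability of the quotient $G/Z$. First I would invoke the standard unitary equivalence
\[
\pi \otimes \Ind^G_Z \sigma \;\cong\; \Ind^G_Z(\pi|_Z \otimes \sigma),
\]
valid for any unitary representation $\sigma$ of $Z$ (with $G$ acting diagonally on the left). In our central discrete setting this is elementary to verify directly on the model of $\Ind^G_Z\sigma$ as the Hilbert space of square-integrable equivariant functions $G\to H_\sigma$; an explicit intertwiner is given on elementary tensors by $\xi\otimes f\mapsto (g\mapsto \pi(g)^*\xi \otimes f(g))$.

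Applying this with $\sigma=1_Z$ the trivial character, so that $\Ind^G_Z 1_Z$ is the quasi-regular representation of $G$ on $\ell^2(G/Z)$, and using the assumption $\pi|_Z = \chi(\cdot)1$, I would get
\[
\pi \otimes \Ind^G_Z 1_Z \;\cong\; \Ind^G_Z(\pi|_Z) \;=\; \Ind^G_Z(\chi \cdot I_H) \;\cong\; H\otimes \Ind^G_Z\chi,
\]
where the last identification uses that induction commutes with direct sums and that $\pi|_Z$ is a direct sum of $\dim H$ copies of $\chi$. On the right $G$ acts only on the second tensor factor, so this representation is a multiple of $\Ind^G_Z\chi$ and in particular weakly equivalent to it.

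Amenability now enters: since $G$ is amenable, the quotient $G/Z$ is amenable as well, so its trivial representation is weakly contained in $\lambda_{G/Z}$. Pulling this weak containment back along the quotient map $G\to G/Z$ reads as $1_G \prec \Ind^G_Z 1_Z$. Tensoring weak containments by a fixed representation preserves them (matrix coefficients of $\pi\otimes\sigma_i$ are products of those of $\pi$ and $\sigma_i$), so
\[
\pi \;\cong\; \pi\otimes 1_G \;\prec\; \pi\otimes \Ind^G_Z 1_Z \;\cong\; H\otimes \Ind^G_Z\chi \;\sim\; \Ind^G_Z\chi,
\]
which is the desired weak containment. The only step that requires a small verification is the projection formula in the first display, but in the present central discrete setting it reduces to a direct computation; the rest of the argument is formal.
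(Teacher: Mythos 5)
Your proposal is correct and follows essentially the same route as the paper: the paper also combines Fell's absorption principle (the projection formula $\Ind^G_Z\sigma\otimes\pi\cong\Ind^G_Z(\sigma\otimes\pi|_Z)$, cited from Bekka--de la Harpe--Valette, Corollary E.2.6(i)) with amenability of $G/Z$ to get $1_G\prec\lambda_{G/Z}$ and then tensor with $\pi$. The only difference is that you verify the projection formula by an explicit intertwiner instead of citing it, which is fine.
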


\bp
Consider the quasi-regular representation $\lambda_{G/Z}\colon G\to B(\ell^2(G/Z))$. In other words, $\lambda_{G/Z}=\Ind^G_Z\eps_Z$, where $\eps_Z$ is the trivial representation of $Z$. By a version of Fell's absorption principle, see \cite{BdlHV}*{Corollary E.2.6(i)}, the representations $\lambda_{G/Z}\otimes\pi$ and $\Ind^G_Z\chi$ are quasi-equivalent. As $G/Z$ is amenable, $\eps_G$ is weakly contained in $\lambda_{G/Z}$, hence $\pi$ is weakly contained in~$\Ind^G_Z\chi$.
\ep

Returning to the proof of the proposition, we apply this lemma to $G=\G^y_y$, $Z=\Iso_y$, $\chi=\chi_J$ and $\pi$ any irreducible representation of $C^*(\G^y_y)$ with kernel $J$. Note that $\G^y_y$ is amenable by amenability of $\G$. We conclude that $\pi$ is weakly contained in $\Ind^{\G^y_y}_{\Iso_y}\chi_J$ and therefore
\begin{equation}\label{eq:ind-Gy}
\ker\Ind^\G_{\G^y_y}\Ind^{\G^y_y}_{\Iso_y}\chi_J\subset\Ind J\subset I.
\end{equation}

Let $\chi\in\hat A$ be the character such that $\chi_J=\chi\circ\Phi$ on $\Iso_y$. For every $z\in\Gu$, define
$$
\rho^\chi_z:=\Ind^\G_{\G^z_z}\Ind^{\G^z_z}_{\Iso_z}(\chi\circ\Phi).
$$
Note that this definition makes sense as $\Phi(\Iso_z)\subset A$ by Lemma~\ref{lem:iso}. Note also that both pictures of induced representations in Section~\ref{ssec:induced} make sense for subgroups of the isotropy groups, and then the usual induction-in-stages isomorphisms hold. Therefore we can equivalently define~$\rho^\chi_z$ as $\Ind^\G_{\Iso_z}(\chi\circ\Phi)$.

\begin{lemma}
The weak equivalence class of $\rho^\chi_z$ is independent of $z$ such that $\overline{[z]}=\Gu$.
\end{lemma}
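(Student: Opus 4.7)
\noindent\textit{Proof plan.} I will prove the lemma in two stages: first, if $y\in[z]$, then $\rho^\chi_y\cong\rho^\chi_z$ via the groupoid action; second, if $y\in\overline{[z]}$, then $\rho^\chi_y$ is weakly contained in $\rho^\chi_z$ via a cyclic-vector and state-continuity argument. For $y,z$ both with dense orbits we have $\overline{[y]}=\overline{[z]}=\Gu$, so the second stage applied in both directions gives weak equivalence.

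For the first stage, fix $\gamma\in\G^y_z$. The interior isotropy bundle $\Iso$ is invariant under groupoid conjugation, so $\Ad\gamma$ restricts to an isomorphism $\Iso_z\to\Iso_y$. Because $\Phi(\Iso_z)=A\subset Z(\Gamma)$, we have $\Phi(\gamma h\gamma^{-1})=\Phi(h)$ for $h\in\Iso_z$, so the characters $\chi\circ\Phi$ on $\Iso_z$ and $\Iso_y$ correspond under $\Ad\gamma$. Right translation $(U_\gamma\xi)(g):=\xi(g\gamma)$ is then a unitary intertwiner $\Ind^\G_{\Iso_z}(\chi\circ\Phi)\to\Ind^\G_{\Iso_y}(\chi\circ\Phi)$.

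For the second stage, let $y\in\overline{[z]}$ and pick $z_n\in[z]$ with $z_n\to y$, so $\rho^\chi_{z_n}\cong\rho^\chi_z$ by the first stage. In each $H_z:=\Ind^\G_{\Iso_z}(\chi\circ\Phi)$, take the unit vector $\xi_z$ defined by $\xi_z(h)=\overline{\chi(\Phi(h))}$ for $h\in\Iso_z$ and $0$ elsewhere on $\G_z$. A direct computation yields
\[
\langle\rho^\chi_z(f)\xi_z,\eta\rangle=\sum_{g\in\G_z}f(g)\overline{\eta(g)},\qquad f\in C_c(\G),\ \eta\in H_z,
\]
which, by testing against $f$ supported on an open bisection through each $g\in\G_z$, shows $\xi_z$ is cyclic for $\rho^\chi_z$. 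The associated vector state is $\psi^\chi_z(F)=\sum_{h\in\Iso_z}\chi(\Phi(h))F(h)$ for $F\in C_c(\G)$, and all matrix coefficients of $\rho^\chi_z$ with respect to the dense set $\{\rho^\chi_z(g)\xi_z:g\in C_c(\G)\}$ are of the form $\psi^\chi_z(f^*ag)$ with $a,f,g\in C_c(\G)$. Establishing $\psi^\chi_{z_n}(F)\to\psi^\chi_y(F)$ for every $F\in C_c(\G)$ will therefore give $\rho^\chi_y\prec\bigoplus_n\rho^\chi_{z_n}\sim\rho^\chi_z$.

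The technical heart is this last convergence claim. For each $h_y\in\Iso_y\cap\supp F$, continuity of $\Phi$ into the discrete group $\Gamma$ and the étale structure of $\Iso$ provide an open bisection $W\subset\Iso$ through $h_y$ on which $\Phi$ is constant; for $n$ large, $\Iso_{z_n}$ contains the unique element of $W$ over $z_n$, contributing $\chi(\Phi(h_y))F(h_y)+o(1)$ to $\psi^\chi_{z_n}(F)$. The delicate part is controlling spurious contributions from elements of $\Iso_{z_n}$ whose limits in $\G$ lie on the boundary $\IsoG\setminus\Iso$, i.e., in $\G^y_y\setminus\Iso_y$: such limits do not contribute to $\psi^\chi_y$ but could in principle contribute to $\psi^\chi_{z_n}$. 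Hausdorffness of $\G$, together with the residual set $\{z:\Iso_z=\G^z_z\}$ from Lemma~\ref{lem:iso-points}, should allow us to refine the choice of $z_n\in[z]$ so that no such accumulation occurs, thereby completing the argument.
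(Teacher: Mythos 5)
Your overall architecture coincides with the paper's proof: unitary equivalence along the orbit via right translation (using centrality of $\Phi(\Iso_z)$), the cyclic vector $\xi_z$ (which is the paper's $\delta_z$ in its $C_c(\G_z)$-picture), and the reduction of weak containment $\rho^\chi_y\prec\bigoplus_n\rho^\chi_{z_n}$ to convergence of the vector states $\psi^\chi_{z_n}\to\psi^\chi_y$ on $C_c(\G)$. But the step you defer at the end is precisely the content of the lemma, and the fix you suggest does not work. First, you cannot in general ``refine the choice of $z_n\in[z]$'' to lie in the residual set of Lemma~\ref{lem:iso-points}: the points $z_n$ must come from the given orbit $[z]$, and since $\Iso_w=\G^w_w$ holds either for all or for no points of an orbit, that orbit meets the residual set only if $z$ itself was already such a point, which is not part of the hypothesis. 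Second, even if it were possible, it is beside the point: the problematic elements $g_n$ already lie in $\Iso_{z_n}$ (that is how they contribute to $\psi^\chi_{z_n}$), and knowing $\Iso_{z_n}=\G^{z_n}_{z_n}$ says nothing about whether their limit $g$ lies in $\Iso_y$. Hausdorffness gives no control either: $\Iso$ is open but its fibers need not vary continuously, and elements of $\Iso_{z_n}$ can perfectly well accumulate at points of $\IsoG\setminus\Iso$ (this happens already for Deaconu--Renault groupoids).

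What actually rules out the spurious contributions --- and the only place where the standing hypotheses on the grading are used --- is the following argument from the paper. Reduce to $F$ supported on a single open bisection $W$, so there is at most one candidate element over each unit. If $g_n\in W\cap\Iso_{z_n}$ for infinitely many $n$, then $g_n\to g$ for some $g\in W\cap\G^y_y$; since $\Gamma$ is discrete and $\Phi$ continuous, $\Phi(g)=\Phi(g_n)$ eventually, and $\Phi(g_n)\in\Phi(\Iso_{z_n})\subset A$ by Lemma~\ref{lem:iso}; since $y$ has dense orbit, the same lemma gives $\Phi(\Iso_y)=A$, so there is $h\in\Iso_y$ with $\Phi(h)=\Phi(g)$, and injectivity of $\Phi$ on $\G^y_y$ forces $g=h\in\Iso_y$. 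Hence $W\cap\Iso_{z_n}\ne\emptyset$ eventually implies $W\cap\Iso_y\ne\emptyset$, and the matrix coefficients converge. Your proposal never invokes the injectivity of $\Phi$ on the isotropy groups or the equality $\Phi(\Iso_y)=A$ for dense-orbit points, which are exactly the assumptions that make the statement true; as it stands, the argument has a genuine gap at its technical heart.
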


\bp
It is not difficult to see that the underlying Hilbert space of $\rho^\chi_z\sim\Ind^\G_{\Iso_z}(\chi\circ\Phi)$ can be obtained from $C_c(\G_z)$ by equipping it with the pre-inner product
$$
\langle\delta_g,\delta_h\rangle_\chi:=\begin{cases}
                                       \chi(\Phi(h^{-1}g)), & \mbox{if } g\in h\Iso_z, \\
                                       0, & \mbox{otherwise},
                                     \end{cases}
$$
while the action of $C_c(\G)$ on $C_c(\G_z)$ defining $\rho^\chi_z$ is then given by the same formula~\eqref{eq:rhox} as for~$\rho_z$.

Let us show first that if $z$ is a point with dense orbit and $w_n\to z$, then $\rho^\chi_z$ is weakly contained in $\bigoplus^\infty_{n=1}\rho^\chi_{w_n}$. As $\delta_z$ is a cyclic vector for $\rho^\chi_z$, it suffices to show that
\begin{equation}\label{eq:convergence}
\langle\rho^\chi_{w_n}(f)\delta_{w_n},\delta_{w_n}\rangle_\chi\to \langle\rho^\chi_z(f)\delta_z,\delta_z\rangle_\chi\quad\text{for all}\quad f\in C_c(\G).
\end{equation}

We may assume that $f\in C_c(W)$ for an open bisection $W$. If $z\notin s(\supp f)$, then $w_n\notin s(\supp f)$ for $n$ large enough and there is nothing to prove. So assume $z\in s(\supp f)$. We then may assume that $w_n\in s(W)$ for all $n$. Let $T\colon s(W)\to W$ be the inverse of $s|_W$. We have
$$
\langle\rho^\chi_z(f)\delta_z,\delta_z\rangle_\chi=\begin{cases}
                                       f(T(z))\chi(\Phi(T(z))), & \mbox{if } T(z)\in \Iso_z, \\
                                       0, & \mbox{otherwise}.
                                     \end{cases}
$$
Therefore in order to prove~\eqref{eq:convergence} it suffices to show that $T(z)\in \Iso_z$ if and only if $T(w_n)\in \Iso_{w_n}$ for all $n$ large enough. In other words, $W\cap \Iso_z\ne\emptyset$ if and only if $W\cap  \Iso_{w_n}\ne\emptyset$ for all $n$ large enough.

The ``only if'' statement is obvious. To prove the ``if'' statement, assume that $W\cap  \Iso_z=\emptyset$. Suppose $W\cap  \Iso_{w_n}\ne\emptyset$ for arbitrary large~$n$. By passing to a subsequence we may assume that this is the case for all~$n$. Let $g_n\in\Iso_{w_n}$ be the unique elements such that $g_n\in W$. As $w_n\to z$, $z\in s(W)$ and $W$ is an open bisection, we have $g_n\to g$ for some $g\in\G^z_z\cap W$. As~$\Phi$ is continuous and $\Gamma$ is discrete, we get $\Phi(g)=\Phi(g_n)$ for sufficiently large $n$. Using that $g_n\in\Iso_{w_n}$, by Lemma~\ref{lem:iso} we conclude that $\Phi(g)\in A$. But as $z$ has a dense orbit, by the same lemma we have $\Phi(\Iso_z)=A$. Hence $g\in\Iso_z$, contradicting the assumption $W\cap\Iso_z=\emptyset$ and completing the proof of weak containment of  $\rho^\chi_z$ in $\bigoplus^\infty_{n=1}\rho^\chi_{w_n}$.

Now observe that for any $\gamma\in\G$ the map $\xi\mapsto\xi(\cdot\,\gamma)$ defines a unitary equivalence between~$\rho^\chi_{s(\gamma)}$ and~$\rho^\chi_{r(\gamma)}$, since by the centrality assumption we have $\Phi\circ\Ad\gamma=\Phi$ on $\Iso_{s(\gamma)}$. If follows that if a unit~$w$ has dense orbit and we choose $w_n\in[w]$ converging to $z$, then we can conclude that $\rho^\chi_z$ is weakly contained in $\rho^\chi_w$. Hence these representations are weakly equivalent by symmetry.
\ep

Applying this lemma to $z=x,y$ and using~\eqref{eq:ind-Gy}, we get
$$
\ker\rho^\chi_x=\ker\rho^\chi_y\subset I.
$$
But $\Iso_x=\Gxx$, so $\rho^\chi_x=\Ind^\G_{\Gxx}(\chi\circ\Phi)$ and therefore $\ker\Ind^\G_{\Gxx}(\chi\circ\Phi)\subset I$.

\begin{lemma}\label{lem:primitive-neigbour}
We have $I=\ker\Ind^\G_{\Gxx}(\chi\circ\Phi)$. Moreover, $\chi\in\hat A$ is the only character with the property $I\subset\ker\Ind^\G_{\Gxx}(\chi\circ\Phi)$.
\end{lemma}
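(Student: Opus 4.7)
The plan is to reduce both claims to the single assertion: \textit{for $\chi_1,\chi_2\in\hat A$, the inclusion $\ker\Ind^\G_{\Gxx}(\chi_1\circ\Phi)\subset\ker\Ind^\G_{\Gxx}(\chi_2\circ\Phi)$ forces $\chi_1=\chi_2$.} Granted this, the equality part is immediate: since $\Iso_x=\Gxx$ and $\Phi|_{\Gxx}$ injects $\Gxx$ into $A\subset Z(\Gamma)$, the group $\Gxx$ is abelian, so $C^*(\Gxx)$ is commutative with maximal ideals exactly $\ker(\chi''\circ\Phi)$ for $\chi''\in\hat A$. By Proposition~\ref{prop:U(I)} the fiber $I_x=\vartheta_x(I)$ is proper, hence contained in some $\ker(\chi''\circ\Phi)$, and Corollary~\ref{cor:ind-primitive} gives $I\subset\ker\Ind^\G_{\Gxx}(\chi''\circ\Phi)$. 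Combined with $\ker\Ind^\G_{\Gxx}(\chi\circ\Phi)\subset I$, which was established before the lemma, the assertion forces $\chi''=\chi$ and the two ideals coincide. The uniqueness statement follows identically: if $I\subset\ker\Ind^\G_{\Gxx}(\chi'\circ\Phi)$ then $\ker\Ind^\G_{\Gxx}(\chi\circ\Phi)\subset\ker\Ind^\G_{\Gxx}(\chi'\circ\Phi)$, whence $\chi'=\chi$.

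To prove the assertion, assume $\chi_1\ne\chi_2$ and pick $h\in\Gxx$ with $\chi_1(\Phi(h))\ne\chi_2(\Phi(h))$. Since $\Iso$ is open and $h\in\Iso_x\subset\Iso$, there is an open bisection $W\subset\Iso$ containing $h$; by local constancy of $\Phi$ we may shrink $W$ so that $\Phi|_W\equiv\Phi(h)$. Choose $q\in C_c(\Gu)$ with $q(x)=1$ and $\supp q\subset s(W)$, and define $f\in C_c(W)$ by $f(g):=q(s(g))$. Set $a:=f-\chi_1(\Phi(h))q\in C_c(\G)$. I will verify that $a\in\ker\rho^{\chi_1}_x\setminus\ker\rho^{\chi_2}_x$, writing $\rho^{\chi'}_x:=\Ind^\G_{\Gxx}(\chi'\circ\Phi)$.

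The key computation is that for every $\chi'\in\hat A$, both $\rho^{\chi'}_x(q)$ and $\rho^{\chi'}_x(f)$ act on $\HH_{\chi'}=\Ind\C_{\chi'\circ\Phi}$ as multiplication operators. Directly from the induced representation formula, $(\rho^{\chi'}_x(q)\xi)(\gamma)=q(r(\gamma))\xi(\gamma)$. For $f$, letting $T_W$ denote the inverse of $r|_W$ (which coincides with that of $s|_W$ since $W\subset\Iso$), the formula gives $(\rho^{\chi'}_x(f)\xi)(\gamma)=f(T_W(r(\gamma)))\,\xi(T_W(r(\gamma))^{-1}\gamma)$; writing $T_W(r(\gamma))^{-1}\gamma=\gamma\cdot h'$ with $h':=\gamma^{-1}T_W(r(\gamma))^{-1}\gamma\in\Gxx$ and using the centrality of $\Phi(T_W(r(\gamma)))=\Phi(h)\in Z(\Gamma)$ to conclude $\Phi(h')=\Phi(h)^{-1}$, the equivariance relation defining $\HH_{\chi'}$ yields $\xi(T_W(r(\gamma))^{-1}\gamma)=\chi'(\Phi(h))\xi(\gamma)$. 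Since $f(T_W(y))=q(y)$ by the definition of $f$, we obtain $\rho^{\chi'}_x(f)=\chi'(\Phi(h))M_{q\circ r}$ and hence $\rho^{\chi'}_x(a)=(\chi'(\Phi(h))-\chi_1(\Phi(h)))M_{q\circ r}$. This vanishes for $\chi'=\chi_1$, but at $\chi'=\chi_2$ it is a nonzero multiplication operator because $q(x)=1$ forces $M_{q\circ r}\ne 0$. The main obstacle in the plan is recognizing and proving this diagonal form of $\rho^{\chi'}_x$ on bisections inside $\Iso$; it is what makes the explicit separating element $a$ visible, and it crucially depends on the centrality of $A$ in $\Gamma$ being available to commute the character past the conjugation in the equivariance relation.
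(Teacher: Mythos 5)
Your proof is correct, and its overall skeleton matches the paper's: first produce \emph{some} character $\omega\in\hat A$ with $I\subset\ker\Ind^\G_{\Gxx}(\omega\circ\Phi)$, then show that distinct characters in $\hat A$ give incomparable induced kernels, which together with the previously established inclusion $\ker\Ind^\G_{\Gxx}(\chi\circ\Phi)\subset I$ yields both claims. Two points differ in execution. For the first step you argue via Proposition~\ref{prop:U(I)} (properness of $I_x$, using $C_0(\Gu)\cap I=0$), commutativity of $C^*(\Gxx)$ (from injectivity of $\Phi|_{\Gxx}$ and $A\subset Z(\Gamma)$) and Corollary~\ref{cor:ind-primitive}; the paper quotes Theorem~\ref{thm:induction}, which is proved in exactly this way, so this is the same content. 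The genuine divergence is in the separation step: the paper fixes $a\in A$ with $\omega(a)\ne\chi(a)$, identifies the open subgroupoid $\Phi^{-1}(A')\cap\Iso_U$ with $U\times A'$ for $A'=\langle a\rangle$, builds a \emph{positive} element from $C_0(U\times\hat{A'})$ separating the two characters, and verifies membership/non-membership in the induced ideals through the fiberwise criterion of Lemma~\ref{lem:induced-ideal2}, which requires evaluating $\vartheta_{z,r}$ along the whole dense orbit $[x]$. You instead take the explicit (non-positive) element $a=f-\chi_1(\Phi(h))q$ supported on a single bisection $W\subset\Iso$ with $\Phi|_W$ constant plus a unit-space function, and compute directly that $\Ind^\G_{\Gxx}(\chi'\circ\Phi)(a)=(\chi'(\Phi(h))-\chi_1(\Phi(h)))M_{q\circ r}$, a scalar multiple of a multiplication operator; your computation of the twisting factor $\chi'(\Phi(h))$ correctly uses the equivariance relation together with centrality of $\Phi(h)$, exactly where the hypothesis $\Phi(\Iso_x)\subset Z(\Gamma)$ enters in the paper as well. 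Your route avoids Lemma~\ref{lem:induced-ideal2}, the invariant set $U$ and the orbit-wide verification, at the cost of a slightly longer hands-on computation with the induced representation; the paper's route keeps the element positive and localizes all checks to evaluations of the conditional-expectation-type maps $\vartheta_{z,r}$, which fits the machinery it has already set up.
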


\bp
By Theorem~\ref{thm:induction}, there is a character $\omega\in\hat A$ such that $I\subset \ker\Ind^\G_{\Gxx}(\omega\circ\Phi)$. In order to prove the lemma it suffices to show that $\omega=\chi$.

Assume $\omega\ne\chi$. Choose $a\in A$ such that $\omega(a)\ne\chi(a)$ and consider the subgroup $A':=\langle a\rangle\subset A$. Consider the open set $U:=r(\Phi^{-1}(a)\cap\Iso)\subset\Gu$ containing $x$. This set is invariant by centrality of $a\in\Gamma$. We can identify the open subgroupoid
$\Phi^{-1}(A')\cap\Iso_U\subset\G$ with $U\times A'$ using the map $g\mapsto (r(g),\Phi(g))$.

Choose functions $h\in C_c(U)_+$ and $\varphi\in C(\hat{A'})_+$ such that $h(x)=1$, $\varphi(\omega|_{A'})=1$ and $\varphi(\chi|_{A'})=0$. Consider the function $f\in C_c(U\times\hat{A'})$ defined by $f(z,\eta):=h(z)\varphi(\eta)$. We can view $f$ as a positive element of $C^*(U\times A')\cong C_0(U)\otimes C^*_r(A')\cong C_0(U\times\hat{A'})$, hence as an element of $C^*(\G_U)\subset C^*(\G)$. We then have $(\omega\circ\Phi)(\vartheta_{x,r}(f))=1$ and $(\chi\circ\Phi)(\vartheta_{z,r}(f))=0$ for all $z\in[x]$. By Lemma \ref{lem:induced-ideal2} this means that $f\notin \ker\Ind^\G_{\Gxx}(\omega\circ\Phi)\supset I$ and $f\in\ker\Ind^\G_{\Gxx}(\chi\circ\Phi)\subset I$, which is a contradiction.
\ep

\bp[Proof of Proposition~\ref{prop:primitive}]
We have shown that if $x$ is such that $\overline{[x]}=\Gu$ and $\Iso_x=\Gxx$, then for $A=\Phi(\Iso_x)$ the map $\hat A\ni\chi\mapsto \ker\Ind^\G_{\Gxx}(\chi\circ\Phi)$ into the set of primitive ideals $I$ such that $C_0(\Gu)\cap I=0$ is surjective. By Lemma~\ref{lem:primitive-neigbour} it is also injective.

Now take an arbitrary point $y\in\Gu$ with dense orbit and $J\in\Prim C^*(\G^y_y)$. Applying the above considerations to $I:=\Ind J$ we conclude that $\Ind J=\ker\Ind^\G_{\Gxx}(\chi\circ\Phi)$, where $\chi\in\hat A$ is the character such that $\chi_J=\chi\circ\Phi$ on $\Iso_y$.

To finish the proof of the proposition it remains to show that for every $\chi\in\hat A$ there is $J\in\Prim C^*(\G^y_y)$ such that $\chi_J=\chi\circ\Phi$ on $\Iso_y$. But any primitive ideal containing the kernel of the representation $\Ind^{\G^y_y}_{\Iso_y}(\chi\circ\Phi)$ of $C^*(\G^y_y)$ has this property, since the restriction of this representation to $\Iso_y$ is $(\chi\circ\Phi)(\cdot)1$.
\ep

From the proof of the proposition we see that if $y$ is a point with dense orbit, $J\in\Prim C^*(\G^y_y)$ and $\chi$ is the character on $\Phi(\Iso_y)$ such that $\chi_J=\Phi\circ\chi$ on $\Iso_y$, then
$$
\Ind J=\ker\rho^\chi_y.
$$
Therefore instead of primitive ideals of $C^*(\G^y_y)$ we can use the ideals $\ker\Ind^{\G^y_y}_{\Iso_y}(\chi\circ\Phi)$ to parameterize primitive ideals of $C^*(\G)$. This is similar to our discussion of maximal ideals in Section~\ref{sectionMaximal}, where we could use either sets of primitive ideals or $\G$-invariant ideals.

\begin{cor}\label{cor:max}
In the setting of Proposition~\ref{prop:primitive}, let $x\in\Gu$ be a point with dense orbit. Then the maximal $\G$-invariant ideals in $C^*(\Gxx)$ are precisely the ideals $\ker\Ind^{\Gxx}_{\Iso_x}\chi$ for $\chi\in\widehat{\Iso_x}$.
\end{cor}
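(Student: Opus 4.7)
The plan is to identify each ideal $K_\chi := \ker\Ind^{\Gxx}_{\Iso_x}\chi$ with a specific primitive ideal of $C^*(\G)$ provided by Proposition~\ref{prop:primitive}, and then exploit this correspondence to obtain both $\G$-invariance and maximality. Writing $A := \Phi(\Iso_x)$, we use that $\Phi|_{\Gxx}$ is injective and $\Phi(\Iso_x) = A$ (Lemma~\ref{lem:iso}), so every $\chi \in \widehat{\Iso_x}$ is of the form $\chi = \bar\chi \circ \Phi$ for a unique $\bar\chi \in \hat A$. By induction in stages,
\[
\Ind K_\chi = \ker\Ind^\G_{\Gxx}\Ind^{\Gxx}_{\Iso_x}\chi = \ker\Ind^\G_{\Iso_x}\chi = \ker\rho^{\bar\chi}_x,
\]
which is precisely the primitive ideal of $C^*(\G)$ associated with $\bar\chi$ in Proposition~\ref{prop:primitive}.

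Next I show each $K_\chi$ is $\G$-invariant, that is, $K_\chi \subset (\Ind K_\chi)_x$ (the reverse inclusion is automatic from Corollary~\ref{cor:ind-primitive}). Fix $P \in \Prim C^*(\Gxx)$ containing $(\Ind K_\chi)_x$. Corollary~\ref{cor:ind-primitive} yields $\Ind P \supset \Ind K_\chi = \ker\rho^{\bar\chi}_x$, and $\Ind P$ is itself a primitive ideal of $C^*(\G)$ (by \cite{IW0}) with $U(\Ind P) = \emptyset$ by Lemma~\ref{lem:orbit} (since $\overline{[x]} = \Gu$). Proposition~\ref{prop:primitive} therefore writes $\Ind P = \ker\rho^{\bar\omega}_x$ for some $\omega \in \widehat{\Iso_x}$, and the inclusion $\ker\rho^{\bar\omega}_x \supset \ker\rho^{\bar\chi}_x$ combined with the uniqueness argument in the proof of Lemma~\ref{lem:primitive-neigbour} (the explicit separating function $f$) forces $\omega = \chi$. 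Hence $\chi_P = \chi$, and by Lemma~\ref{lem:Fell} applied to the central subgroup $\Iso_x \subset \Gxx$ with $\Gxx$ amenable, we conclude $P \supset K_\chi$. Intersecting over all such $P$ gives $(\Ind K_\chi)_x \supset K_\chi$, establishing $\G$-invariance.

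It remains to show every proper $\G$-invariant ideal lies in some $K_\chi$ and that distinct $K_\chi$ are incomparable. For a proper $\G$-invariant ideal $J \subset C^*(\Gxx)$, the ideal $\Ind J$ is proper (otherwise $J = (\Ind J)_x = C^*(\Gxx)$), and $U(\Ind J) = \emptyset$ by Lemma~\ref{lem:orbit}. Theorem~\ref{thm:induction} applied at $x$ produces a primitive ideal $J_0 \subset C^*(\Gxx)$ with $\Ind J \subset \Ind J_0$; setting $\chi := \chi_{J_0}$, Proposition~\ref{prop:primitive} identifies $\Ind J_0 = \ker \rho^{\bar\chi}_x = \Ind K_\chi$, and Corollary~\ref{cor:ind-primitive} gives $J \subset K_\chi$. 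Finally, $K_\chi \subset K_{\chi'}$ would force $\ker \rho^{\bar\chi}_x \subset \ker \rho^{\bar{\chi'}}_x$, hence $\chi = \chi'$ by the same separating-function argument. Together these steps show that each $K_\chi$ is a maximal $\G$-invariant ideal and that every maximal $\G$-invariant ideal is of this form. The main obstacle is the $\G$-invariance of $K_\chi$, which hinges on identifying $\Ind K_\chi$ with a primitive ideal from Proposition~\ref{prop:primitive} and invoking the uniqueness provided by Lemma~\ref{lem:primitive-neigbour}.
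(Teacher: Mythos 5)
Your proposal is correct and takes essentially the same route as the paper: you identify $\Ind K_\chi=\ker\rho^{\bar\chi}_x$ by induction in stages, use the incomparability of these primitive ideals coming from Lemma~\ref{lem:primitive-neigbour}, Lemma~\ref{lem:Fell}, Corollary~\ref{cor:ind-primitive} and the fact that the fiber $(\Ind K_\chi)_x$ is an intersection of primitive ideals to obtain $\G$-invariance of $K_\chi$, and then deduce maximality and exhaustion from the classification in Proposition~\ref{prop:primitive}. The differences (proving $\G$-invariance of $K_\chi$ directly rather than first showing $(\Ind K_\chi)_x$ is maximal, and invoking Theorem~\ref{thm:induction} where the paper simply picks a primitive ideal containing the given $\G$-invariant ideal) are only reorganizations of the same ingredients.
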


\bp
To simplify the notation, put $G:=\Gxx$, $Z:=\Iso_x$ and $J_\chi:=\ker\Ind^G_Z\chi\subset C^*(G)$ for $\chi\in\hat Z$. We know that the map $\chi\mapsto\Ind J_\chi$ defines a bijection between $\hat Z$ and the primitive ideals $I\subset C^*(\G)$ such that $C_0(\Gu)\cap I=0$. By Lemma~\ref{lem:primitive-neigbour} we know also that the ideals $\Ind J_\chi$ are not contained in each other for different $\chi$.

Fix $\chi\in \hat Z$. By Lemma~\ref{lem:G-invariance}, the ideal $(\Ind J_\chi)_x$ is $\G$-invariant. Assume $H\subset C^*(\Gxx)$ is a proper $\G$-invariant ideal containing $(\Ind J_\chi)_x$. The ideal $H$ is contained in a primitive ideal~$K$. Then the ideal $\Ind K$ is primitive and $C_0(\Gu)\cap\Ind K=0$, hence $\Ind K=\Ind J_\omega$ for some $\omega\in\hat Z$. But then
$$
\Ind J_\chi=\Ind\big((\Ind J_\chi)_x\big)\subset\Ind H\subset\Ind K=\Ind J_\omega,
$$
which is possible only when $\omega=\chi$. Therefore $H=(\Ind H)_x\subset(\Ind K)_x=(\Ind J_\chi)_x$. This shows that $(\Ind J_\chi)_x$ is a maximal $\G$-invariant ideal.

Next we want to show that $(\Ind J_\chi)_x=J_\chi$. We know that if $H\subset C^*(G)$ is a primitive ideal, then $\Ind H=\Ind J_{\chi_H}$. From this, arguing similarly to the previous paragraph, we can conclude that $(\Ind J_\chi)_x\subset H$ if and only if $\chi=\chi_H$. It follows that
$$
(\Ind J_\chi)_x=\bigcap_{H\in\Prim C^*(G):\chi_H=\chi}H.
$$
By Lemma~\ref{lem:Fell} the last intersection contains $J_\chi$. Since $(\Ind J_\chi)_x \subset J_\chi$ by Lemma \ref{lem:induced-ideal}, we conclude that $(\Ind J_\chi)_x=J_\chi$. Therefore $J_\chi$ is a maximal $\G$-invariant ideal.


It remains to show that there are no other maximal $\G$-invariant ideals. Assume $H$ is such an ideal. Let~$K$ be a primitive ideal containing $H$. Then
$$
\Ind H\subset\Ind K= \Ind J_{\chi_K},
$$
which implies that $H\subset J_{\chi_K}$, whence $H= J_{\chi_K}$ by maximality.
\ep

\bigskip

\begin{bibdiv}
\begin{biblist}

\bib{AFS}{article}{
   author={Alekseev, Vadim},
   author={Finn-Sell, Martin},
   title={Non-amenable principal groupoids with weak containment},
   journal={Int. Math. Res. Not. IMRN},
   date={2018},
   number={8},
   pages={2332--2340},
   issn={1073-7928},
   review={\MR{3801485}},
   doi={10.1093/imrn/rnw305},
}

\bib{AS}{article}{
   author={Archbold, R. J.},
   author={Spielberg, J. S.},
   title={Topologically free actions and ideals in discrete $C^*$-dynamical
   systems},
   journal={Proc. Edinburgh Math. Soc. (2)},
   volume={37},
   date={1994},
   number={1},
   pages={119--124},
   issn={0013-0915},
   review={\MR{1258035}},
   doi={10.1017/S0013091500018733},
}

\bib{MR0656488}{article}{
   author={Batty, C. J. K.},
   title={Simplexes of extensions of states of $C\sp{\ast} $-algebras},
   journal={Trans. Amer. Math. Soc.},
   volume={272},
   date={1982},
   number={1},
   pages={237--246},
   issn={0002-9947},
   review={\MR{0656488}},
   doi={10.2307/1998958},
}

\bib{BdlHV}{book}{
   author={Bekka, Bachir},
   author={de la Harpe, Pierre},
   author={Valette, Alain},
   title={Kazhdan's Property (T)},
   series={New Mathematical Monographs},
   volume={11},
   publisher={Cambridge University Press, Cambridge},
   date={2008},
   pages={xiv+472},
   isbn={ 978-0-521-88720-5},
   review={\MR{2415834}},
}

\bib{BKKO}{article}{
   author={Breuillard, Emmanuel},
   author={Kalantar, Mehrdad},
   author={Kennedy, Matthew},
   author={Ozawa, Narutaka},
   title={$C^*$-simplicity and the unique trace property for discrete
   groups},
   journal={Publ. Math. Inst. Hautes \'{E}tudes Sci.},
   volume={126},
   date={2017},
   pages={35--71},
   issn={0073-8301},
   review={\MR{3735864}},
   doi={10.1007/s10240-017-0091-2},
}

\bib{BNRSW}{article}{
      author={Brown, Jonathan H.},
      author={Nagy, Gabriel},
       author={Reznikoff, Sarah},
      author={Sims, Aidan},
      author={Williams, Dana},
       title={Cartan subalgebras in C$^{*}$-algebras of Hausdorff étale groupoids},
     journal={Integral Equations and Operator Theory},
        date={2016},
  volume={85},
   number={1},
   pages={109--126},
}

\bib{MR4592883}{article}{
   author={Christensen, Johannes},
   title={The structure of KMS weights on \'{e}tale groupoid $C^*$-algebras},
   journal={J. Noncommut. Geom.},
   volume={17},
   date={2023},
   number={2},
   pages={663--691},
   issn={1661-6952},
   review={\MR{4592883}},
   doi={10.4171/jncg/507},
}

\bib{CN}{article}{
      author={Christensen, Johannes},
   author={Neshveyev, Sergey},
   title={(Non)exotic completions of the group algebras of isotropy groups},
   journal={Int. Math. Res. Not. IMRN},
   date={2022},
   number={19},
   pages={15155--15186},
   issn={1073-7928},
   review={\MR{4490951}},
   doi={10.1093/imrn/rnab127},
}

\bib{MR3988622}{article}{
   author={Clark, Lisa Orloff},
   author={Exel, Ruy},
   author={Pardo, Enrique},
   author={Sims, Aidan},
   author={Starling, Charles},
   title={Simplicity of algebras associated to non-Hausdorff groupoids},
   journal={Trans. Amer. Math. Soc.},
   volume={372},
   date={2019},
   number={5},
   pages={3669--3712},
   issn={0002-9947},
   review={\MR{3988622}},
   doi={10.1090/tran/7840},
}

\bib{MR4043708}{article}{
   author={Crytser, Danny},
   author={Nagy, Gabriel},
   title={Simplicity criteria for \'{e}tale groupoid $C^*$-algebras},
   journal={J. Operator Theory},
   volume={83},
   date={2020},
   number={1},
   pages={95--138},
   issn={0379-4024},
   review={\MR{4043708}},
   doi={10.7900/jot},
}

\bib{EH}{book}{
   author={Effros, Edward G.},
   author={Hahn, Frank},
   title={Locally compact transformation groups and $C\sp{\ast} $- algebras},
   series={},
   volume={No. 75},
   publisher={American Mathematical Society, Providence, R.I.},
   date={1967},
   pages={92},
   review={\MR{0227310}},
}

\bib{MR2419901}{article}{
   author={Exel, Ruy},
   title={Inverse semigroups and combinatorial $C^\ast$-algebras},
   journal={Bull. Braz. Math. Soc. (N.S.)},
   volume={39},
   date={2008},
   number={2},
   pages={191--313},
   issn={1678-7544},
   review={\MR{2419901}},
   doi={10.1007/s00574-008-0080-7},
}

\bib{GR}{article}{
   author={Gootman, Elliot C.},
   author={Rosenberg, Jonathan},
   title={The structure of crossed product $C\sp{\ast} $-algebras: a proof
   of the generalized Effros--Hahn conjecture},
   journal={Invent. Math.},
   volume={52},
   date={1979},
   number={3},
   pages={283--298},
   issn={0020-9910},
   review={\MR{0537063}},
   doi={10.1007/BF01389885},
}

\bib{HLS}{article}{
   author={Higson, N.},
   author={Lafforgue, V.},
   author={Skandalis, G.},
   title={Counterexamples to the Baum-Connes conjecture},
   journal={Geom. Funct. Anal.},
   volume={12},
   date={2002},
   number={2},
   pages={330--354},
   issn={1016-443X},
   review={\MR{1911663}},
   doi={10.1007/s00039-002-8249-5},
}

\bib{IW0}{article}{
   author={Ionescu, Marius},
   author={Williams, Dana P.},
   title={Irreducible representations of groupoid $C^*$-algebras},
   journal={Proc. Amer. Math. Soc.},
   volume={137},
   date={2009},
   number={4},
   pages={1323--1332},
   issn={0002-9939},
   review={\MR{2465655}},
   doi={10.1090/S0002-9939-08-09782-7},
}

\bib{IW}{article}{
   author={Ionescu, Marius},
   author={Williams, Dana P.},
   title={The generalized Effros--Hahn conjecture for groupoids},
   journal={Indiana Univ. Math. J.},
   volume={58},
   date={2009},
   number={6},
   pages={2489--2508},
   issn={0022-2518},
   review={\MR{2603756}},
   doi={10.1512/iumj.2009.58.3746},
}

\bib{KS}{article}{
   author={Kalantar, Mehrdad},
   author={Scarparo, Eduardo},
   title={Boundary maps and covariant representations},
   journal={Bull. Lond. Math. Soc.},
   volume={54},
   date={2022},
   number={5},
   pages={1944--1961},
   issn={0024-6093},
   review={\MR{4523573}},
}

\bib{Ka}{misc}{
      author={Kawabe, Takuya},
       title={Uniformly recurrent subgroups and the ideal structure of reduced crossed products},
         how={preprint},
        date={2017},
      eprint={\href{https://arxiv.org/abs/1701.03413}{\texttt{1701.03413 [math.OA]}}},
}

\bib{KKLRU}{misc}{
      author={Kennedy, Matthew},
      author={Kim, Se-Jin},
      author={Li, Xin},
      author={Raum, Sven},
      author={Ursu, Dan},
       title={The ideal intersection property for essential groupoid C$^{*}$-algebras},
         how={preprint},
        date={2021},
      eprint={\href{https://arxiv.org/abs/2107.03980}{\texttt{2107.03980 [math.OA]}}},
}

\bib{KW}{article}{
   author={Kirchberg, Eberhard},
   author={Wassermann, Simon},
   title={Exact groups and continuous bundles of $C^*$-algebras},
   journal={Math. Ann.},
   volume={315},
   date={1999},
   number={2},
   pages={169--203},
   issn={0025-5831},
   review={\MR{1721796}},
   doi={10.1007/s002080050364},
}

\bib{MR4246403}{article}{
   author={Kwa\'{s}niewski, Bartosz Kosma},
   author={Meyer, Ralf},
   title={Essential crossed products for inverse semigroup actions:
   simplicity and pure infiniteness},
   journal={Doc. Math.},
   volume={26},
   date={2021},
   pages={271--335},
   issn={1431-0635},
   review={\MR{4246403}},
}

\bib{N}{article}{
   author={Neshveyev, Sergey},
   title={KMS states on the $C^\ast$-algebras of non-principal groupoids},
   journal={J. Operator Theory},
   volume={70},
   date={2013},
   number={2},
   pages={513--530},
   issn={0379-4024},
   review={\MR{3138368}},
   doi={10.7900/jot.2011sep20.1915},
}

\bib{NS}{article}{
   author={Neshveyev, Sergey},
   author={Schwartz, Gaute},
   title={Non-Hausdorff \'{e}tale groupoids and $C^*$-algebras of left
   cancellative monoids},
   journal={M\"{u}nster J. Math.},
   volume={16},
   date={2023},
   number={1},
   pages={147--175},
   issn={1867-5778},
   review={\MR{4563262}},
}

\bib{O}{misc}{
      author={Ozawa, Narutaka},
       title={Lecture on the Furstenberg boundary and C$^*$-simplicity},
        date={2014},
      eprint={\href{https://www.kurims.kyoto-u.ac.jp/~narutaka/notes/yokou2014.pdf}{\texttt{https://www.kurims.kyoto-u.ac.jp/\textasciitilde narutaka/notes/yokou2014.pdf}}},
}

\bib{MR2134336}{article}{
   author={Phillips, N. Christopher},
   title={Crossed products of the Cantor set by free minimal actions of
   $\mathbb Z^d$},
   journal={Comm. Math. Phys.},
   volume={256},
   date={2005},
   number={1},
   pages={1--42},
   issn={0010-3616},
   review={\MR{2134336}},
   doi={10.1007/s00220-004-1171-y},
}

\bib{Rbook}{book}{
   author={Renault, Jean},
   title={A groupoid approach to $C^{\ast} $-algebras},
   series={Lecture Notes in Mathematics},
   volume={793},
   publisher={Springer, Berlin},
   date={1980},
   pages={ii+160},
   isbn={3-540-09977-8},
   review={\MR{584266}},
}

\bib{R}{article}{
   author={Renault, Jean},
   title={The ideal structure of groupoid crossed product $C^\ast$-algebras},
   note={With an appendix by Georges Skandalis},
   journal={J. Operator Theory},
   volume={25},
   date={1991},
   number={1},
   pages={3--36},
   issn={0379-4024},
   review={\MR{1191252}},
}

\bib{Sau}{article}{
   author={Sauvageot, Jean-Luc},
   title={Id\'{e}aux primitifs induits dans les produits crois\'{e}s},
   journal={J. Functional Analysis},
   volume={32},
   date={1979},
   number={3},
   pages={381--392},
   issn={0022-1236},
   review={\MR{0538862}},
   doi={10.1016/0022-1236(79)90047-8},
}

\bib{SW}{article}{
   author={Sims, Aidan},
   author={Williams, Dana P.},
   title={The primitive ideals of some \'etale groupoid C$^{*}$-algebras},
   journal={Algebr. Represent. Theory},
   volume={19},
   date={2016},
   number={2},
   pages={255--276},
   doi={10.1007/s10468-015-9573-4},
}

\bib{S}{article}{
   author={Starling, Charles},
   title={A New Uniqueness Theorem for the Tight C$^{*}$-algebra of an Inverse Semigroup},
   journal={C. R. Math. Rep. Acad. Sci. Canada},
   volume={44},
   date={2022},
   number={4},
   pages={88--112},
   issn={0706-1994,2816-5810},
}

\bib{W}{article}{
   author={Willett, Rufus},
   title={A non-amenable groupoid whose maximal and reduced $C^*$-algebras
   are the same},
   journal={M\"{u}nster J. Math.},
   volume={8},
   date={2015},
   number={1},
   pages={241--252},
   issn={1867-5778},
   review={\MR{3549528}},
   doi={10.17879/65219671638},
}

\bib{MR0617538}{article}{
   author={Williams, Dana P.},
   title={The topology on the primitive ideal space of transformation group
   $C\sp{\ast} $-algebras and C.C.R. transformation group $C\sp{\ast}
   $-algebras},
   journal={Trans. Amer. Math. Soc.},
   volume={266},
   date={1981},
   number={2},
   pages={335--359},
   issn={0002-9947},
   review={\MR{0617538}},
   doi={10.2307/1998427},
}

\end{biblist}
\end{bibdiv}

\bigskip

\end{document}